\newcommand{\sfrac}[2]{#1/#2}
\newcommand{\Auti}{\operatorname{Aut}}
\newcommand{\PGU}{\operatorname{PGU}}
\def\Sz{{\rm{Sz}}}
\newcommand{\fqq}{\mathbb{F}_q}
\newcommand{\Gtwo}{G_{2}}
\newcommand{\GG}{^{2}G_{2}}
\newcommand{\AAA}{^{2}A_{2}}
\newcommand{\BB}{^{2}B_{2}}
\newcommand{\fgq}{\mathbb{F}_{q}}
\newcommand{\fg}[1]{\mathbb{F}_{#1}}
\newcommand{\fgqc}{\overline{\mathbb{F}_{q}}}
\newcommand{\JJr}{\mathcal{J}_{\text{R}}}
\newcommand{\NN}{\mathbb{N}}
\newcommand{\ZZ}{\mathbb{Z}}
\newcommand{\LL}{\mathcal{L}}
\newcommand{\xh}{X_{\text{H}}}
\newcommand{\xs}{X_{\text{S}}}
\newcommand{\xr}{X_{\text{R}}}
\newcommand{\fh}{F_{\text{H}}}
\newcommand{\fs}{F_{\text{S}}}
\newcommand{\fr}{F_{\text{R}}}
\newcommand{\gh}{g_\text{H}}
\newcommand{\gs}{g_\text{S}}
\newcommand{\gr}{g_\text{R}}
\newcommand{\Wedge}{\land^2}
\newcommand{\OO}{\mathcal{O}}
\newcommand{\im}{\text{Im}}
\newcommand{\pp}{\mathbb{P}}
\newcommand{\PP}{\mathbb{P}}
\newcommand{\ppfgqc}{\mathbb{P}^{13}(\fgqc)}
\newcommand{\ppfr}{\mathbb{P}_{\fr}}
\newcommand{\ppfx}{\mathbb{P}_{\fgq(x)}}
\newcommand{\Da}{\mathcal{D}}
\newcommand{\psii}{\psi_{\alpha \beta \gamma \delta}}
\newcommand{\gl}{\text{GL}(n,k)}
\newcommand{\fieldextension}{/}
\newcommand{\placeextension}{|}
\newtheorem{theorem}{Theorem}[section]
\newtheorem{lemma}[theorem]{Lemma}
\newtheorem{prop}[theorem]{Proposition}
\newtheorem{problem}[theorem]{Problem}
\newtheorem{cor}[theorem]{Corollary}
\theoremstyle{definition}
\theoremstyle{remark}
\newtheorem{remark}[theorem]{Remark}
\numberwithin{equation}{section}
\begin{document}

\title{Smooth Embeddings for the Suzuki and Ree Curves}


\author{Abdulla Eid}
\address{}
\curraddr{}
\email{eid1@illinois.edu}
\thanks{}

\author{Iwan Duursma}
\address{}
\curraddr{}
\email{duursma@illinois.edu}
\thanks{}


\date{}

\begin{abstract}
The Hermitian, Suzuki and Ree curves form three special families of curves
with unique properties. They arise as the Deligne-Lusztig varieties of
dimension one and their automorphism groups are the algebraic groups of
type 2A2, 2B2 and 2G2, respectively. For the Hermitian and Suzuki curves
very ample divisors are known that yield smooth projective embeddings of
the curves. In this paper we establish a very ample divisor for the Ree
curves. Moreover, for all three families of curves we find a symmetric set
of equations for a smooth projective model, in dimensions 2, 4 and 13,
respectively. Using the smooth model we determine the unknown nongaps in
the Weierstrass semigroup for a rational point on the Ree curve.
\end{abstract}

\maketitle


\section {Introduction}\label{chapter:1}



In this paper we study three important examples of algebraic curves over finite fields, the Hermitian, Suzuki, and Ree curves. These curves have many applications to algebraic geometry codes \cite{Duursma1}, \cite{DPark}, \cite{EidH}, \cite{Goppa}, \cite{Sti}, to exponential sums over finite fields \cite{Moreno}, and in finite geometry \cite{Hirschfeld}. The three curves are optimal curves with respect to Serre's explicit formula method. Their number of $\fgq$-rational points coincides with $N_{q}(g)$ which is the maximum number of $\fgq$-rational points for a curve of genus $g$ over $\fgq$. Moreover, they can be described (abstractly) as Deligne-Lusztig curves associated to the simple groups $\AAA$, $\BB$, and $\GG$, respectively \cite{Han}. The latter property suggests that we could apply the techniques that have been used for the Hermitian and Suzuki curves to find a very ample linear series for the Ree curve, construct smooth embeddings for the three Deligne-Lusztig curves above, and compute the Weierstrass non-gaps semigroup at the point at infinity for the Ree curve over $\fg{27}$. In this paper we also provide a complete set of five equations that define the Suzuki curve in the projective space $\mathbb{P}^{4}$ and 105 equations that define the Ree curve in the projective space $\mathbb{P}^{13}$. Moreover, these equations can be read easily from a complete graph with four and seven vertices, respectively.

The outline of this paper is as follows. In Section \ref{sec1:2} we introduce the three groups $\AAA$, $\BB$, and $\GG$ and the Deligne-Lusztig varieties. In Section \ref{sec1:3} we study the smooth embeddings for the Hermitian and Suzuki curves. In Section \ref{sec1:4} we generalize the techniques of Section \ref{sec1:3} to provide 105 equations that will be used in Section \ref{sec1:5} to construct a smooth model for the Ree curve. In Section \ref{sec1:6} we relate our work with the smooth embedding of the Ree curve as Deligne-Lusztig curve in \cite{Kane}. In Section \ref{sec1:7} we show that the Ree group acts on the smooth model in the projective space and finally in Section \ref{sec1:8} we compute the Weierstrass non-gaps semigroup at $P_\infty$ over $\fg{27}$.

\section{Preliminaries}\label{sec1:2}

\subsection{The groups $\AAA,\BB,\GG$}\label{sect2.1}

In this section we will recall the construction of the twisted groups $\AAA$, $\BB$, $\GG$. We begin our discussion with a historical background about the groups $A_2$, $B_2$, and $G_2$. Recall that these groups are the Lie groups associated to the Lie algebras of dimension 2 with the Dynkin diagrams and root systems as in Figures \ref{fig1:Dynkin} and  \ref{table1:RootSystem}.

In particular, the group $A_2(q)$ is the projective linear group $\text{PGL}(3,q)$ and the twisted group $\AAA(q)$ is the projective unitary group $\text{PGU}(3,q)$. 


The group $B_2(q)$, for $q=2^{2m+1}$, is the symplectic group $\text{PSp}_4(q)$. The group acts on a $3$-dimensional projective space $E$ and consists of those linear transformations that leave invariant a quadratic form $x_0 y_3 + x_1 y_2 + x_2 y_1 + x_3 y_0$. The choice of form is irrelevant but with hindsight on the consequences for the coordinates of the Suzuki curve we choose this form. Two mutually orthogonal vectors $(x_0,x_1,x_2,x_3)$ and $(y_0,y_1,y_2,y_3)$ span an isotropic line in the symplectic geometry. The symplectic group acts on the set $L$ of isotropic lines in $E$. A line in $E$ has Pl\"ucker coordinates $p_{ij} = x_i y_j + x_j y_i$ and the isotropic lines are precisely the lines with $p_{0,3} = p_{1,2}$. 

The Suzuki groups $\BB(q)=\text{Suz}(q)$ were originally defined as twisted Chevalley groups \cite{SuzNew}, \cite{CheNew}. Ree \cite{ReeNew} defines the group as the set of fixed points of the symplectic group under an involution. In the description of the group by Tits \cite{Tits} the involution arises through a polarity on the geometry of isotropic lines. We describe in some detail this polarity as it will explain the symmetry in the equations for the Suzuki curve.
 
A line with Pl\"ucker coordinates $p_{i,j}$ is incident with the point $(x_0,x_1,x_2,x_3)$ in $E$ if and only if
\begin{equation} \label{eq:inc}
\left [\begin {array}{cccc} 
p_{21}&p_{02}&p_{10}&0 \\
p_{13}&p_{30}&0&p_{01}\\
p_{32}&0&p_{03}&p_{20} \\
0&p_{23}&p_{31}&p_{12} 
\end {array}
\right ] 
\left [\begin {array}{c} 
x_0 \\
x_1 \\
x_2 \\
x_3
\end {array}
\right ]
= 0
\end{equation}
Let 
\[
X = \left [\begin {array}{cc} 
x_0 &x_2 \\ x_1 &x_3 
\end {array}
\right ],  \qquad
P = \left [\begin {array}{cc} 
p_{23} &p_{02} \\ p_{13} &p_{01} 
\end {array}
\right ]
\]
and let $\det X = d_X^2$ and $\det P = d_P^2$. For an isotropic line with $p_{03}=p_{12}=d_P$ we write (\ref{eq:inc}) as
\[
\left [\begin {array}{cc} 
d_P &0 \\  0 &d_P  
\end {array}
\right ] X =  \left [\begin {array}{cc} 
0 &p_{02} \\ p_{13} &0  
\end {array}
\right ] X + 
X 
\left [\begin {array}{cc} 
0 &p_{23} \\ p_{01} &0  
\end {array}
\right ].
\]
After multiplication on the left or on the right with $\text{adj}(X)$ and a comparison of the off-diagonal entries we obtain
\[
\left [\begin {array}{cc} 
 d_X^2 &0 \\  0 & d_X^2  
\end {array}
\right ] P =  \left [\begin {array}{cc} 
0 &x_2^2 \\ x_1^2 &0  
\end {array}
\right ] P + 
P 
\left [\begin {array}{cc} 
0 &x_0^2 \\ x_3^2 &0  
\end {array}
\right ]
\]
We can now formulate a duality for the symplectic geometry $(E,L)$ of projective $3$-space $E$ and its set of isotropic lines $L$.
Let $(F,M)$ denote another copy of the same geometry. For matrices $X$ and $P$, let $X \in E$ be a point and $(P,d_P) \in L$ be an isotropic line, and let
$P \in F$ be a point and $(X^{(2)},d_X^2) \in M$ be an isotropic line. Then
\[
\text{$X$ is incident with $(P,d_P)$ if and only if $P$ is incident with $(X^{(2)},d_X^2)$.}
\]


Now we give an explicit description of the Suzuki group $\BB(q)=\text{Suz}(q)$ ($q:=2^{2m+1}$, $m \in \NN$). Following Tits \cite{Tits}, let $E(x_0,x_1,x_2,x_3)$ be the 3-dimensional projective space with homogeneous coordinates $x_0,x_1,x_2,x_3$ over $\fgq$. Let $p_{ij}$ be the Pl\"ucker coordinates of lines in $E$ which clearly satisfy the relation $p_{01}p_{23}+p_{02}p_{13}+p_{03}p_{12}=0$. Consider the set $L$ of lines such that $p_{01}=p_{23}$. Let $V$ be the variety (hyperquadric) representing the set $L$ in the projective space $D(p_{01},p_{02},p_{03},p_{12},p_{31})$ which is given by the equation 
\[
p_{01}^2+p_{02}p_{31}+p_{03}p_{12}=0
\]
The automorphism group that leaves $E$ and $L$ invariant is $G(E,L):= B_2(q)$.

The tangent hyperplanes of $V$ intersect in a point with coordinates $p_{02}=p_{03}=p_{12}=p_{31}=0$, which allow us to inject $V$ into the 3-dimensional space $F(y_0,y_1,y_2,y_3)$ by $y_0:=p_{02}$, $y_1:=p_{31}$, $y_2:=p_{03}$, $y_3:=p_{12}$. Let $q_{ij}$ be the Pl\"ucker coordinates of lines in $F$. Now a line in $L$ passing through a point $(x_0,x_1,x_2,x_3)$ in $E$ will be represented in $F$ through $V$ as follows:
\begin{align*}
q_{01}=q_{23}=x_0x_1+x_2x_3, \quad q_{02}=x_0^2,\quad q_{03}=x_2^2\\
q_{31}=x_1^2 \quad q_{12}=x_3^2,
\end{align*}
which forms a set of lines $M$ in $F$ with the equation $q_{01}=q_{23}$. 

As a conclusion of the discussion above, we establish a duality between $(E,L)$ and $(F,M)$. This will give two maps $\delta:L \to F$ and a dual map $\delta':E \to M$ such that if a point $x \in E$ and a line $d \in L$, we have $x \in d$ and $\delta(d) \in \delta'(x)$ are equivalent.

On the level of groups, the duality above induces a group monomorphism $\delta^*:G(E,L) \to G(F,M)$ such that $\delta^*(g)(\delta'(d))=\delta'(g\cdot d)$, for $g \in G(E,L)$ and $d \in L$.

Next we introduce the polarity map which will give the definition of the twisted group $\BB$ and the set of $q^2+1$ $\fqq$-rational points. Next, let $\sigma$ be the automorphism group with $x^\sigma:=x^{2q_0}$ and consider the polarity map
\begin{align*}
f: E &\to F\\
(x_{0},x_{1},x_{2},x_{3})&\mapsto (y_0=x_{0}^\sigma,y_1=x_{1}^\sigma,y_2=x_{2}^\sigma,y_3=x_3^\sigma).
\end{align*}

Note that if $x,x' \in E$, then the relation "$f(x)$ belongs to $\delta'(x')$" is symmetric. Thus, the map $f$ appears like a polarity map. Let $\Gamma$ be the set of points in $x \in E$ such that $f(x) \in \delta'(x)$.

The map $f$ defines a group homormorphism $f^*:G(E,L) \to G(F,M)$ by $f^*(g)(f(x)):=f(g\cdot x)$, where $x\in E$, $g \in G(E,L)$. Define
\begin{align*}
G^*(f)&:=\{ g \in G(E,L) \,:\, f^*(g)=\delta^*(g) \},\\
G^* &:= \{ g \in G(E,L) \,:\, g\cdot x =x, \,\forall x \in \Gamma \}, 
\end{align*}
i.e., $G^*(f)$ is the group of all automorphisms that leave $f$ invariant which is a subgroup of $G^*$. As in \cite{Tits}, $G^*(f)=G^*$ and the Suzuki group $\BB(q)$ is defined to be $G^*$. We note that $G^*$ acts transitively on the set $\Gamma$ which is the set of $\fgq$-rational points of size $q^2+1$. Moreover, $G^{*}$ acts on $E$. Hence, it acts on 3-dimensional projective space.

We end the discussion of the Suzuki group by stating the defining equation of the set $\Gamma$. Let $x,z \in \fqq$, let $\gamma(x,z):=[1:x:z:w] \in \mathbb{P}^{3}$, where
\[
w:=xz + x^{\sigma+2}+z^{\sigma}
\]
Then, the set of all $\fqq$-rational points can be described as the set 
\[
\Gamma:=\{ \gamma(x,z) \in \mathbb{P}^3  \,|\, x,z \in \fqq  \} \cup \{[0:0:0:1] \}
\]
The Suzuki group is the group that leaves $\Gamma$ above invariant, i.e., $\BB(q) \subseteq  \text{PGL}(4,\fgq)$. Moreover, the Suzuki group $\BB(q)$ acts 2-transitively on the set $\Gamma$. Using the above, we note that the set $\Gamma$ is the set of $\fgq$-rational points that corresponds to the set of rational places of the function field $F':=\fgq(x,z)$ defined by
\begin{align}
 z^q-z&=x^{2q_0} (x^q-x) \label{eq:w1x},
\end{align}
which has $q^2$ affine $\fgq$-rational places and one place at infinity. The full description of how the automorphism group $\BB$ acts can be found in \cite[Section 4.3]{Tits}.



Now we discuss the groups $G_2(q)$ and $\GG(q)$ as it appeared in Dickson \cite{Dickson} and Tits \cite{Tits} respectively. In \cite[Section 9]{Dickson}, Dickson originally described the group $G_2(q)$ as the group of linear homogeneous transformations on the seven variables $\xi_0,\xi_1,\xi_2,\xi_3,\mu_1,\mu_2,\mu_3$ over the field $\fgq$ which leaves the equation
\[
 \xi_0^2 + \xi_1\mu_1 + \xi_2\mu_2 +\xi_3\mu_3
\]
and the system of equations
\begin{equation}\label{eq:*}
\begin{aligned}
X_1 + Y_{23}=0 , \quad &X_2+Y_{31}=0, \quad X_3 + Y_{12}=0, \\
Y_1+X_{23}=0, \quad &Y_2 + X_{31}=0, \quad Y_3+X_{12}=0
\end{aligned}
\end{equation}

invariant, where
\begin{alignat}{2}
X_i:=&\left|\begin{array}{cc}\xi_0&\xi_i\\ \overline{\xi _0}& \overline{\xi _i} \end{array}\right|,
\,Y_i:=&&\left|\begin{array}{cc}\xi_0&\mu_i\\ \overline{\xi _0}& \overline{\mu _i} \end{array}\right|,\\
X_{ij}:=&\left|\begin{array}{cc}\xi_i&\xi_j\\ \overline{\xi_i}& \overline{\xi_j} \end{array}\right|,
\, Y_{ij}:=&&\left|\begin{array}{cc}\mu_i&\mu_j\\ \overline{\mu_i}& \overline{\mu_j} \end{array}\right|,
\, Z_{ij}:=\left|\begin{array}{cc}\xi_i&\mu_j\\ \overline{\xi_i}& \overline{\mu_j} \end{array}\right|,
\end{alignat}
and $\overline{\xi_i}, \overline{\mu_i}$ are the conjugate of $\xi_i,\mu_i$, i.e., $\overline{\xi_i}=\xi^q, \overline{\mu_i}=\mu_i^q$.


Since the Ree group $\GG(q)\subseteq G_2(q)$ is a subgroup, we expect that we can add more equations to obtain a subvariety with an action of $\GG(q)$. 

Next we give an explicit construction of the Ree group $\GG(q)$ ($q:=3q_0^2$, $q_0:=2^m$, $m \in \NN$). Tits \cite{Tits} carried Dickson's idea further to the Ree group $\GG(q)$ and showed that $\GG(q)$ acts on a seven dimensional space. Following the notations in \cite{Tits}, let $P(x_*,x_0,x_1,x_2,x_{0'},x_{1'},x_{2'})$ be the 6-dimensional projective space $P$ with homogeneous coordinates $x_*,\dots,x_{2'}$ over $\fgq$ (where all arithmetic on the indices is modulo 3). Let $E$ be the quadric defined by the equation $x_*^2+\sum_{i=0}^{2}x_ix_{i'}=0$ and let $L$ be the set of lines defined by
\begin{equation}\label{eq:Tits6}
\left.  \begin{aligned}
p_{*i}+p_{(i+1)'(i+2)'}=0,\\
p_{*i'}+p_{(i+1)(i+2)}=0,\\
\sum_{i=0}^{2}p_{ii'}=0,\\
\end{aligned}
\quad \right \}
 \qquad \text{Similar to } \eqref{eq:*}
\end{equation}
where $p_{i,j}$ is the Pl\"ucker coordinate as before. 
Therefore, the automorphism group that leaves $E$ and $L$ invariant is $G(E,L):=G_2(q)$. Now let $V$ be a 5-dimensional variety representing $L$ in the projective space $D(p_{*i},p_{*i'},p_{ij'}\, : \,\sum p_{ii}=0)$ of dimension 13 over $\fgq$. Define the 6-dimensional projective space $Q(y_{*},y_{i},y_{i'})$ by $y_{*}:=p_{00'}-p_{11'}$, $y_{i}:=p_{i+1,i'}$, and $y_{i'}:=p_{i(i+1)'}$. Then, $V$ will be mapped into the quadric $F: y_{*}^2+\sum_{i=0}^{2}y_iy_{i'}=0$ in $Q(y_{*},y_{i},y_{i'})$ (which might be singular, see \cite{Tits}).

Let $(x_{*},x_{i},x_{i'}) \in E$ be a point in $E$, we define the set of lines $M$ in $F$ using the equations \eqref{eq:Tits6} by replacing the $p_i$'s with $q_i$'s, the Pl\"ucker coordinates of $Q$, such that $x_{*}^3=q_{00'}-q_{11'}$, $x_i^3=q_{(i+1)i'}$, and $x_{i'}^3=q_{i(i+1)'}$. This will give two maps $\delta:L \to F$ and $\delta':E \to M$. Therefore, we will have a group homomorphism $\delta^*:G(E,L) \to G(F,M)$ such that $\delta^*(g)(\delta'(d))=\delta'(g\cdot d)$, where $g \in G(E,L), d \in L$.

Next, let $\sigma$ be the automorphism with $x^\sigma:=x^{3q_0}$ and consider the polarity map
\begin{align*}
f: E &\to F\\
(x_{*},x_{i},x_{i'})&\mapsto (x_{*}^\sigma,x_{i}^\sigma,x_{i'}^\sigma).
\end{align*}
This map defines a group homormorphism $f^*:G(E,L) \to G(F,M)$ by $f^*(g)(f(x)):=f(g\cdot x)$, where $x\in E$, $g \in G(E,L)$. Let $\Gamma$ be the set of points in $E$ such that $f(x) \in \delta'(x)$. Define
\begin{align*}
G^*(f)&:=\{ g \in G(E,L) \,:\, f^*(g)=\delta^*(g) \},\\
G^* &:= \{ g \in G(E,L) \,:\, g\cdot x =x, \,\forall x \in \Gamma \}, 
\end{align*}
i.e., $G^*(f)$ is the group of all automorphisms that leave $f$ invariant which is a subgroup of $G^*$. As in \cite{Tits}, $G^*(f)=G^*$ and the Ree group $\GG$ is defined to be $G^*$. We note that $G^*$ acts transitively on the set $\Gamma$ which is the set of $\fgq$-rational points of size $q^3+1$. Moreover, $G^{*}$ acts on $E$. Hence, it acts on 6-dimensional projective space.

We give different interpretation of Condition \eqref{eq:Tits6}. Let $E$ be the variety with points the nonzero $3 \times 3$-matrices 
\begin{equation}\label{eq:point} 
\left[ \begin{array}{ccc} 
0 &x_1 &x_{-2} \\ x_{-1} &-x_0 &x_3 \\ x_2 &x_{-3} &x_0 
\end{array} \right], \qquad x_0^2+x_1 x_{-1} + x_2 x_{-2} + x_3 x_{-3} = 0.
\end{equation}
With this condition the characteristic polynomial of a matrix $X \in E$ is of the form $T^3 - \det X = 0$ and the matrix has a unique eigenvalue $d_X$. For two matrices $X$ and $Y$ in $E$, the linear span of $X$ and $Y$ forms a line $L$ in $E$ in the sense of \eqref{eq:Tits6} if and only if
\begin{equation} \label{eq:line}
[X,Y] = XY-YX \in \langle I \rangle  
\end{equation}
The Conditions \eqref{eq:Tits6} or \eqref{eq:line} take the form
\[
 \left[ \begin {array}{ccccccc} 
0&-y_1&-y_2&-y_3&y_{-1}&y_{-2}&y_{-3} \\ 
y_1&0&-y_{-3}&y_{-2}&y_0&0&0 \\ 
y_2&y_{-3}&0&-y_{-1}&0&y_0&0 \\ 
y_3&-y_{-2}&y_{-1}&0&0&0&y_0\\
-y_{-1}&-y_0&0&0&0&y_3&-y_2 \\
-y_{-2}&0&-y_0&0&-y_3&0&y_1\\
-y_{-3}&0&0&-y_0&y_2&-y_1&0
\end {array} \right] 
\left[ \begin {array}{c} -x_0  \\ x_{-1} \\ x_{-2}  \\ x_{-3} \\ x_1 \\ x_2 \\ x_3 
\end {array} \right] 
= 0.
\]
This can be written in a short form as
\begin{align*}
&x^+ \cdot y^- = y^+ \cdot x^-  ( = -x_0 y_0 ) \\
&x^+ \times y^+ = - x_0 y^- + y_0 x^- \\
&x^-\times y^- = + x_0 y^+ - y_0 x^+ ,
\end{align*}
where $x^+:=(x_1,x_2,x_3)$, $x^-:=(x_{-1},x_{-2},x_{-3})$. Note
\[
x^+ \cdot x^- = - x_0^2, \qquad  y^+ \cdot y^-  = -y_0^2.
\]
Thus, we have the two $2 \times 4$ orthogonal spaces
\[
\langle (x_0, x^+), (y_0, y^+) \rangle ~\perp~ \langle (x_0,x^-), (y_0,y^-) \rangle.
\] 
For the line $L$ through $X$ and $Y$, let 
\begin{align}
P^\ast 
&~=~ 
\left[ \begin{array}{ccc} 
p_{1,-1} &p_{1,-2} &p_{1,-3} \\
p_{2,-1} &p_{2,-2} &p_{2,-3} \\
p_{3,-1} &p_{3,-2} &p_{3,-3}
\end{array} \right]
\label{eq:P} \\
&~=~
\left[ \begin{array}{c} 
x_1 \\
x_2 \\
x_3 
\end{array} \right]
\left[ \begin{array}{ccc} 
y_{-1} &y_{-2} &y_{-3} 
\end{array} \right]
~-~
\left[ \begin{array}{c} 
y_1 \\
y_2 \\
y_3 
\end{array} \right]
\left[ \begin{array}{ccc} 
x_{-1} &x_{-2} &x_{-3} 
\end{array} \right] \nonumber
\end{align}
The matrix $P^\ast$ has trace $p_{1,-1}+p_{2,-2}+p_{3,-3} = (x^+ \cdot y^-) - (y^+ \cdot x^-) = 0.$ After subtracting $p_{1,-1} I$ the matrix $P = P^\ast - p_{1,-1} I \in E$. The lines in $E$ through $X$ form a pencil. Let $L(X,Y)$ and $L(X,Y')$ be two independent lines in this pencil, with matrices $P$ and $P'$, respectively. The two matrices $P$ and $P'$ span a line in $E$ in the sense of \eqref{eq:line}. Let $Q$ be the matrix defined by \eqref{eq:P} for the line through $P$ and $P'$. Then $Q = X^{(3)}$, which can be summarized as
\begin{equation} \label{eq:polree}
P(P(X,Y),P(X,Y')) = X^{(3)}.
\end{equation}

We conclude this section by stating Cohen's construction \cite{Cohen}. Cohen used Tits' construction above to define the Ree group as the automorphism group acting on the set $\Gamma$. More specifically, for $x,y,z\in \fgq$, let $\gamma(x,y,z):=[x:y:z:1:u:v:w] \in \mathbb{P}^{6}$ such that $u,v$, and $w$ are defined by the equations
\begin{align*} \label{eq:uvw}
 &u=x^2y-xz+y^\sigma - x^{\sigma+3},\\
 &v=x^\sigma y^\sigma -z^\sigma+xy^2+yz-x^{2\sigma+3},\\
 &w=-z^2-xv-yu.
\end{align*}
Then, the set of all $\fgq$-rational points can be defined as the set
\[
 \Gamma:=\{ \gamma(x,y,z) \in \mathbb{P}^6 \,|\, x,y,z \in \fgq \} \cup \{ [0:0:0:0:0:0:1] \}.
\]
Cohen defined the Ree group as the group of all projective linear transformations leaving $\Gamma$ invariant, i.e., $\GG(q)\subseteq \text{PGL}(7,\fgq)$. Moreover, the Ree group $\GG(q)$ acts 2-transitively on the set $\Gamma$. Using the above, we note that the set $\Gamma$ is the set of $\fgq$-rational points that corresponds to the set of rational places of the function field $F':=\fgq(x,y,z)$ defined by
\begin{align}
 y^q-y&=x^{3q_0} (x^q-x) \label{eq:w1x},\\
 z^q-z&=(x^{q_0+1}-y^{q_0})(x^q-x)\label{eq:w2xw1}
\end{align}
which has $q^3$ affine $\fgq$-rational places and one place at infinity. The full description of how the automorphism group $\GG$ acts can be found in Section \ref{sec1:7}.


\subsection{The Deligne-Lusztig Curves}\label{sec1:2.2}


The Hermitian, Suzuki, and Ree curves can be described (abstractly) as Deligne-Lusztig curves associated to the simple groups $\AAA$, $\BB$, and $\GG$, respectively. In this subsection we introduce their structure as Deligne-Lusztig curves and we list their basic properties. We refer the reader to the books \cite{Cart1},\cite{Cart2},\cite{humphreys},\cite{Hurt},\cite{Malle},\cite{springer} and the papers \cite{DL},\cite{Han} for a full treatment of the subject. Here we will follow the notations and the exposition in \cite{Han}.

Let $G$ be a connected algebraic group over a finite field $k=\fgq$, i.e., $G$ is an affine variety defined over $k$ such that $G$ is also a group in which both the multiplication and inversion maps are morphisms and $G$ is a connected topological space in the Zariski topology on $G$. Thus, $G$ can be regarded as a closed subgroup of the linear group $\gl$, for some $n\in \NN_{>0}$. The \emph{unipotent radical} of $G$ is the maximal closed connected normal subgroup all of whose elements are \emph{unipotent elements}\footnote{An element $a$ in a ring $A$ is called a \emph{unipotent element} if $1-a$ is a nilpotent element.}. $G$ is called \emph{reductive} if the unipotent radical of $G$ is trivial. A subgroup $B\subseteq G$ is called a \emph{Borel} subgroup if it is a maximal connected solvable subgroup. An algebraic group is called a \emph{torus} if it is isomorphic to $k^\times \times k^\times \times \cdots \times k^\times$ (as embedded in $\gl$).

For our purpose, let $G$ be a connected reductive algebraic group embedded in $\gl$ with a Borel subgroup $B$ and a maximal torus $T$ \footnote{$G$ will be $A_2$, $B_2$, or $G_2$ for our purpose.}. The \emph{Weyl} group of $G$ is the finite group $W:=W(G):=N_{G}(T)/T$, where $N_G(T)$ is the normalizer of $T$ in $G$. We note here that the Weyl group is a Coxeter group which means it is generated by a set of generators $s_i$'s, called the \emph{reflections}, of order 2 and has a presentation with relations of the form $(s_is_j)^{m_{ij}}$ ($m_{ij}=2,3,4,6$), i.e.,
\[
 W(G):= \left \langle s_1,\dots,s_{r} \mid (s_i)^2=1,\, (s_is_j)^{m_{ij}}=1\, \text{ for } i \neq j  \right \rangle.
\]

Let
\begin{align*}
 \text{Fr}_{q}: G \subseteq \gl &\to G \subseteq \gl \\
                (a_{ij}) &\mapsto (a_{ij}^{q})
\end{align*}
be the standard Frobenius map which will define a map $\sigma:G \to G$ such that some power of $\sigma$ is the standard Frobenius map\footnote{$\sigma^2=\text{Fr}_q$ , i.e., $\sigma(g)=g^{\sqrt{q}}$ for $G=A_2$, $B_2$, and $G_2$, respectively.} $\text{Fr}_q$ \cite[Page 183]{Malle}. $\sigma$ is called the \emph{Frobenius map}\footnote{In some literature it is called the \emph{Steinberg automorphism} \cite[Page 183]{Malle}.}. Denote the fixed group of the Frobenius map $\sigma$ by $G^{\sigma}$ \footnote{$G^\sigma=\,\AAA$, $\BB$, and $\GG$ if $G=A_2$, $B_2$, and $G_2$, respectively.} . It is called a \emph{finite group of Lie type} \cite[Theorem 21.5]{Malle}.

Let $X_G:=\{B \subseteq G \,|\, B \text{ is a Borel subgroup of } G \}$. Since any two Borel subgroups of $G$ are conjugate by an element in $G$, we have that $G$ acts transitively on $X_G$ by conjugation. Moreover, using the Lang-Steinberg Theorem \cite[Theorem 21.7]{Malle}, which asserts that the map $L:G \ni g \mapsto g^{-1}\sigma(g)\in G$ is surjective, we have that any two $\sigma$-stable Borel subgroups are conjugate by an element in $G^\sigma$ \cite[Section 2.2.2]{Han}.
Therefore, the group $G^\sigma$ acts on the set of $\sigma$-stable Borel subgroups by conjugation. We also have a natural bijection $G/B \ni gB \mapsto gBg^{-1} \in X_G$, for a fixed Borel subgroup $B$.

In \cite{Han}, we can identify the set of orbits of $G$ in $X_{G} \times X_{G}$ with the Weyl group $W$. For $w\in W$, the orbit in $X_G \times X_G$ corresponding to $w$, denoted by $\mathcal{O}(w)$, is given by
\begin{align*}
 \mathcal{O}(w):&=\{(g_1B,g_2B) \in G/B \times G/B \,|\, g_1^{-1}g_2\in BwB \}.
\end{align*}
We say that two Borel subgroups $B_1,B_2$ of $G$ are in \emph{relative position} $w$ if $(B_1,B_2)\in \mathcal{O}(w)$. Define the Deligne-Lusztig variety $X(w)$ to be
\[
 X(w):=\{ B' \in X_G \,|\, (B',\sigma(B'))\in \mathcal{O}(w) \}.
\]
If we identify $X_G \simeq G/B$, for a fixed Borel subgroup $B$ of $G$, then as in \cite{Hurt} we have that
 \[
  X(w)=\{ (gB,\sigma(g)B) \in G/B \times G/B \,|\, g^{-1}\sigma(g) \in BwB \}.
 \]
We have the following proposition.
\begin{prop}\cite{DL,Han}\label{prop:2.1}
\quad
\begin{enumerate}
 \item $\dim(X(w))=\text{length}(w)=n$, where $w=s_1s_2\cdots s_n$ is a product of reflections.
 \item $X(w)$ is irreducible if and only if for every simple reflection $s \in W$, $s$ is in the $\sigma$-orbit of some $s_i$ ($i=1,2,\dots ,n$)
 \item $X(w)$ is $G^\sigma$-stable.
 \item $X(e)\subseteq X(w)$ is the set of all $\fgq$-rational points. 
 \item $G^\sigma$ acts on $X(e)$ and $G^\sigma=\Auti(X(w))$
 \item If $w=s$ is a simple reflection, then in particular we have the Deligne-Lusztig curve $\overline{X}(w):=X(w)\cup X(e)$ which is a curve with the group $G^\sigma$ acting as the $\fgq$-rational automorphism group and it is irreducible if and only if every simple reflection $s' \in W$ is in the $\sigma$-orbit of $s$.
\end{enumerate}
\end{prop}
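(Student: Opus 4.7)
The plan is to reduce each assertion to two standard inputs: the Bruhat decomposition $G = \bigsqcup_{w \in W} BwB$, and the Lang--Steinberg theorem, which states that the Lang map $L : G \to G$, $g \mapsto g^{-1}\sigma(g)$, is surjective with finite fibers of cardinality $|G^\sigma|$. Fix a Borel subgroup $B$ and identify $X_G \cong G/B$. Set $Y_w := L^{-1}(BwB) = \{g \in G : g^{-1}\sigma(g) \in BwB\}$, so that $X(w)$ is the image of $Y_w$ under the quotient map $G \to G/B$.

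For (1), because $L$ is a finite surjection, $\dim Y_w = \dim BwB = \dim B + \ell(w)$, and the right $B$-action on $Y_w$ covering the projection $Y_w \to X(w)$ then forces $\dim X(w) = \ell(w) = n$. For (3), given $h \in G^\sigma$ and $gB \in X(w)$, compute $(hg)^{-1}\sigma(hg) = g^{-1}h^{-1}\sigma(h)\sigma(g) = g^{-1}\sigma(g) \in BwB$, so left translation by $G^\sigma$ preserves $X(w)$. For (4), the cell $\mathcal{O}(e) = B$ gives $X(e) = \{B' \in X_G : \sigma(B') = B'\}$, the set of $\sigma$-stable Borels, which under Lang--Steinberg is a single $G^\sigma$-orbit and identifies with the $\fgq$-rational points of the smooth model of $\overline{X}(w)$.

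The serious work sits in (2), the irreducibility criterion, and this is where I expect the main obstacle to be. My plan is to follow Deligne--Lusztig \cite{DL}: for a reduced expression $w = s_1 \cdots s_n$, fibre $X(w)$ over the partial flag variety $G/P_J$, where $P_J$ is the standard parabolic generated by the simple reflections appearing in $w$, and then induct on $\ell(w)$ to show that $X(w)$ is connected if and only if the $\sigma$-orbit of $\{s_1,\dots,s_n\}$ exhausts the whole set of simple reflections of $W$. The combinatorial bookkeeping of the $\sigma$-action on the Dynkin diagram is what I expect to be delicate, and I would rely directly on \cite[Section 1]{DL} together with \cite[Section 2]{Han}. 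With (2) in hand, (5) follows by combining (3) with the identification $G^\sigma = \Aut(\overline{X}(w))$ known case-by-case for the Hermitian, Suzuki, and Ree curves, and (6) specializes (1), (2), and (4) to $\ell(s) = 1$: one obtains a smooth irreducible projective curve $\overline{X}(w) = X(w) \cup X(e)$ with $G^\sigma$-action, whose $\fgq$-rational points are exactly $X(e)$.
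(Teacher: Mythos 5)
The paper offers no proof of this proposition: it is stated as a result imported from \cite{DL,Han}, so there is nothing internal to compare your argument against. Your sketch correctly reconstructs the standard arguments from those sources --- Bruhat decomposition plus Lang--Steinberg for (1), (3), (4), and deferral to the literature for the irreducibility criterion in (2), which is indeed the only genuinely hard item --- so it is consistent with the paper's own reliance on the citations; the one small wrinkle, already present in the paper's statement, is that $X(e)$ sits in the closure of $X(w)$ rather than in $X(w)$ itself.
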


\begin{remark}
 If we require $G^\sigma$ to be a simple group, then $G^\sigma$ is either $\AAA$, $\BB$ or $\GG$. In that case, the Weyl group $W(G)$ has two generators $s_1,s_2$ of order 2 with $(s_1s_2)^{m_{12}}=1$, where $m_{12}=3,4,6$ for $G=A_2$, $B_2$, and $G_2$, respectively. The Dynkin diagram has two vertices corresponding to the two simple positive roots $\alpha,\beta$ of $G$ as in Figure~\ref{fig1:Dynkin}.
\begin{figure}[htb!]
\centering
\subfloat[ $A_2$]
{
\begin{tikzpicture}
  [scale=1,auto=left,]
  \node [circle,draw] (a) at (0,0) {};
  \node [circle,draw] (b) at (2,0)  {};

  \draw [-] (a) -- (b);
   \node [] (1) at (-0.4,0)  {$\beta$};
  \node [] (1) at (2.4,0) {$\alpha$};

\end{tikzpicture}

}
\subfloat[ $B_2$]
{
\begin{tikzpicture}
  [scale=1,auto=left,]
  \node [circle,draw] (a) at (0,0) {};
  \node [circle,draw] (b) at (2,0)  {};

  \draw [->] (a.north) --  (b.north);
  \draw [->] (a.south) -- (b.south);

  \node [] (1) at (-0.4,0)  {$\beta$};
  \node [] (1) at (2.4,0) {$\alpha$};
\end{tikzpicture}
}
\quad \subfloat[ $G_2$]
{
\begin{tikzpicture}
  [scale=1,auto=left,]
  \node [circle,draw] (a) at (0,0) {};
  \node [circle,draw] (b) at (2,0)  {};

  \draw [->] (a) -- (b);
  \draw [->] (a.north) --  (b.north);
  \draw [->] (a.south) -- (b.south);
   \node [] (1) at (-0.4,0)  {$\beta$};
  \node [] (1) at (2.4,0) {$\alpha$};

\end{tikzpicture}
}
\caption{The Dynkin diagram for $G$.}\label{fig1:Dynkin}

\end{figure}
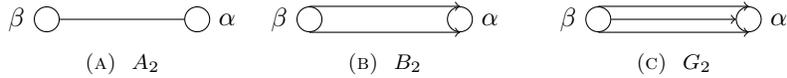

The Dynkin diagram for the group $G^\sigma$ is the same diagram as in Figure~\ref{fig1:Dynkin} with the $\sigma$-action permuting the two roots.
\end{remark}

\begin{remark}
 The two dimensional root system of the group $G=A_2$, $B_2$, or $G_2$ are shown in Figure~\ref{table1:RootSystem}, where $\alpha$ represents the short root and $\beta$ represents the long root \cite[Table A.2]{Malle}.
\begin{figure}
\centering
\subfloat[ $A_2$]
{
\begin{tikzpicture}
  [scale=1,auto=left, minimum size=5em]


  \draw [->] (0,0) --    (1,0);
  \draw [->] (0,0) --   (0.5,0.866);
  \draw [->] (0,0) --  (-0.5,0.866);

  \draw [->] (0,0) --  (-1,0);


 \draw [->] (0,0) --   (-0.5,-0.866);
  \draw [->] (0,0) --  (0.5,-0.866);

 \node [] (1) at (1.3,0) {$\alpha$};
 \node [] (1) at (0.8,1.166) {$\alpha+\beta$};
 \node [] (1) at (-0.8,1.166) {$\beta$};

\end{tikzpicture}
}
\subfloat[$B_2$]
{
\begin{tikzpicture}
  [scale=1,auto=left, minimum size=5em]


  \draw [->] (0,0) --    (1,0);
  \draw [->] (0,0) --   (1.06,1.06);
  \draw [->] (0,0) --  (0,1);
  \draw [->] (0,0) --  (-1.06,1.06);
  \draw [->] (0,0) --  (-1,0);


  \draw [->] (0,0) --   (-1.06,-1.06);
  \draw [->] (0,0) --  (0,-1);
  \draw [->] (0,0) --  (1.06,-1.06);


\node [] (1) at (1.3,0) {$\alpha$};
\node [] (1) at (1.36,1.36) {$2\alpha+\beta$};
\node [] (1) at (0,1.3) {$\alpha+\beta$};
\node [] (1) at (-1.36,1.36) {$\beta$};

\end{tikzpicture}
}
\subfloat[ $G_2$]
{
\begin{tikzpicture}
  [scale=1,auto=left, minimum size=5em]


  \draw [->] (0,0) --    (1,0);
  \draw [->] (0,0) --   (1.3,0.75);
  \draw [->] (0,0) --  (0.5,0.866);
  \draw [->] (0,0) --  (0,1.5);
  \draw [->] (0,0) --  (-0.5,0.866);
  \draw [->] (0,0) --  (-1.3,0.75);
  \draw [->] (0,0) --  (-1,0);


  \draw [->] (0,0) --   (-1.3,-0.75);
  \draw [->] (0,0) --  (-0.5,-0.866);
  \draw [->] (0,0) --  (0,-1.5);
  \draw [->] (0,0) --  (0.5,-0.866);
  \draw [->] (0,0) --  (1.3,-0.75);


\node [] (1) at (1.3,0) {$\alpha$};
\node [] (1) at (2,0.566) {$3\alpha+\beta$};
\node [] (1) at (0.8,1.166) {$2\alpha+\beta$};
\node [] (1) at (0,1.8) {$3\alpha+2\beta$};
\node [] (1) at (-0.8,1.166) {$\alpha+\beta$};
\node [] (1) at (-1.6,0.75) {$\beta$};

\end{tikzpicture}
}

\caption{The root system for $G$.}\label{table1:RootSystem}

\end{figure}
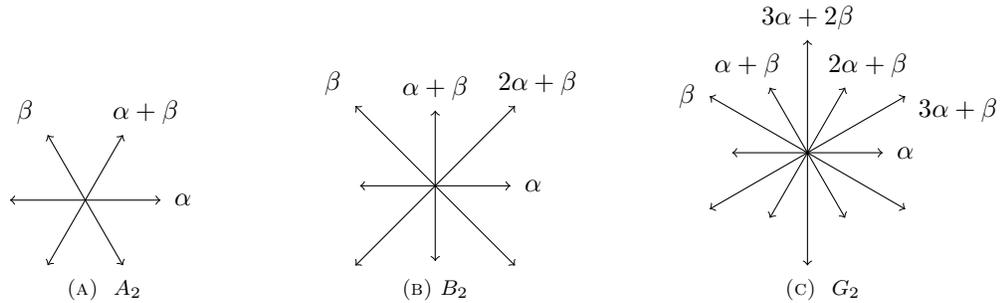
Note that a root system in Figure~\ref{table1:RootSystem} determines an underlying simple Lie algebra $\mathcal{G}$ with $G$ as its Lie group \cite{Agricola}.
\end{remark}

As above, let $G^\sigma$ be a simple group, i.e., $G^\sigma$ is either $\AAA$, $\BB$ or $\GG$. In Table \ref{table:1.1.1} we summarize some information of the Deligne-Lusztig curves such as the automorphism group, the number of $\fgq$-rational points, and the genus \cite{Han}.


\begin{table}[htb!]
\begin{center}
\begin{tabular}{ cccc  }
\toprule
Curve & Hermitian & Suzuki & Ree \\
\midrule
 $G$ & $A_2$& $B_2$ & $G_2$ \\
 $G^\sigma$ & $\AAA$& $\BB$ & $\GG$ \\
 $\left | G^\sigma \right | $ & $q_0^3(q-1)(q_0^3+1) $& $q^2(q-1)(q^2+1)$ & $q^3(q-1)(q^3+1)$ \\

$N_1$ & $q_0^3+1$& $q^2+1$ & $q^3+1$ \\

 $g$ & $\frac{1}{2}q_0(q_0-1)$ & $q_0(q-1)$ & $\frac{3}{2}q_0(q-1)(q+q_0+1)$ \\
\bottomrule
\end{tabular}

\end{center}
\caption{Information about the Deligne-Lusztig curves associated to the groups $\AAA$, $\BB$, and $\GG$.}
\label{table:1.1.1}
\end{table}

These three curves are realized as the projective curves corresponding to the following algebraic function fields which have the same number of $\fgq$-rational points, genus, and automorphism group as in Table \ref{table:1.1.1}, see \cite{HP},\cite{Tor3}.

\begin{enumerate}
 \item The Hermitian curve \cite{Sti} corresponds to $\fh:=\fgq(x,y)$ over $\fgq$ ($q:=q_0^2$, $q_0$ is a prime power) defined by the equation
 \[
  y^{q_0}+y=x^{q_0+1}.
 \]

 \item The Suzuki curve \cite{HS} corresponds to $\fs:=\fgq(x,y)$ over $\fgq$ ($q:=2q_0^2$, $q_0:=2^m$, and $m \in \NN$) defined by the equation
\[
 y^q-y=x^{q_0}(x^q-x).
\]

 \item The Ree curve \cite{HP},\cite{Ped} corresponds to $\fr:=\fgq(x,y_1,y_2)$ over $\fgq$ ($q:=3q_0^2$, $q_0:=3^m$, and $m \in \NN$) defined by the two equations
\begin{align*}
y_1^q-y_1=x^{q_0}(x^q-x),\\
y_2^q-y_2=x^{q_0}(y_1^q-y_1).
\end{align*}
\end{enumerate}

Kane \cite{Kane} used the construction of these three curves as Deligne-Lusztig curves to give smooth embeddings of these curves in the projective space of dimension 2, 4, and 13, respectively. We will use the function field description above to give a smooth embedding in projective space of dimensions 2, 4, and 13. In Section \ref{sec1:6} we will show that for the Ree curve, the set of $\fgq$-rational points is the same for our embedding and for Kane's embedding.


\section{The Smooth Embeddings for the Hermitian and Suzuki Curves}\label{sec1:3}
In this section we study the smooth embeddings for the two Deligne-Lusztig curves associated to the groups $\AAA$ and $\BB$. These curves are known as the Hermitian and Suzuki curves respectively. We will use the same techniques of this section to construct a smooth embedding for the third Deligne-Luszig curve associated to the group $\GG$ which is the Ree curve. This has also been done independently by Kane \cite{Kane}, where he provided a systematic approach to find smooth embeddings for these curves in the projective space in a uniform way. His approach was to use the structure of the curves as Deligne-Lusztig curves with Borel subgroups as the points on the curves. Our approach is to use the function field description of these curves as given by equations. In Section \ref{sec1:6} we will show that for the Ree curve, these two different embeddings give the same $\fgq$-rational points. The following work is motivated by the work of Tits \cite{Tits} who considered the idea of the line between a point and its Frobenius image and the use of Pl\"{u}cker coordinates. In particular, in the preliminary results for the Suzuki curve announced in \cite{DEmail},\cite{DTalk}, where the author gave five defining equations for the smooth model of the Suzuki curve using the idea of Pl\"ucker coordinates. We generalize that approach to a uniform approach for all three Deligne-Lustig curves.

We review first the smooth embedding for the Hermitian curve. 

\subsection{The Hermitian Curve }\label{sec1:3.1}
The Hermitian curve has been studied in detail in \cite[Chapter 6]{Sti}. 
It is given by the affine equation $y^{q_0}+y=x^{q_0+1}$ over $\fgq$ ($q:=q_0^2$, $q_0$ is a prime power). It has $q_0^3+1$ $\fgq$-rational points with one point at infinity $P_\infty$ and is of genus $\gh=\sfrac{q_0(q_0-1)}{2}$. Hence, the Hermitian curve attains the Hasse-Weil bound. Therefore, it is a maximal and optimal curve with $L$-polynomial $L(t):=(q_0t+1)^{2\gh}$. Moreover, the Hermitian curve is the unique curve of genus $\gh=\sfrac{q_0(q_0-1)}{2}$ and number of $\fgq$-rational points equals to $q_0^3+1$ \cite{StiRuck}. The automorphism group of the Hermitian curve is $\AAA=\PGU(3,q)$. Moreover, the equation $y^{q_0}+y=x^{q_0+1}$ defines a smooth model for the Hermitian curve.
\begin{remark}
Let $H:=(q_0+1)P_\infty$. Then, the linear series $\Da:=\left |H\right |$ is a very ample linear series of dimension 2 generated by $1,x,y$. Therefore, the morphism associated to $\Da$ is a smooth embedding for the Hermitian curve in $\mathbb{P}^{2}(\fgqc)$.
\end{remark}
\begin{remark}
 The tangent line at a point $P$ in the Hermitian curve is given by the equation
\[
 1_{P}^{q_0}\cdot y-x_{P}^{q_0}\cdot x+y_{P}^{q_0}\cdot 1=0.
\]
\end{remark}

\begin{remark}\label{ch1:hermitianremrk}
The Hermitian curve can also be defined in $\pp^2(\fgqc)$ using the equation \cite[Section 6.4]{Sti}
\[
v^{q_0+1}+u^{q_0+1}+1=0.
\]
\end{remark}

Now from the defining equation $y^{q_0}+y=x^{q_0+1}$ of the Hermitian curve, we get that
\begin{equation}\label{append:1}
\begin{pmatrix}
 1^{q_0} &  -x^{q_0} & y^{q_0}
\end{pmatrix} \begin{pmatrix}
y & y^q \\
x & x^q \\
1 & 1^q
\end{pmatrix} = 0.
\end{equation}
Consider the following matrix $H$
\[
H = \left( \begin{array}{lllllll}
1 &:~x &:~y\\ \medskip
1 &:~x^q &:~y^q
\end{array} \right).
\]
Let $H_{i,j}$ be the Pl\"ucker coordinates of the matrix $H$, i.e., $H_{1,2}=x^q-x$, $H_{3,1}=y-y^q$, and $H_{2,3}=xy^q-yx^q$. Then,
\begin{equation}\label{append:2}
\begin{pmatrix}
 H_{1,2}&  H_{3,1} &  H_{2,3}
\end{pmatrix} \begin{pmatrix}
y & y^q \\
x & x^q \\
1 & 1^q
\end{pmatrix} = 0.
\end{equation}
Note that Equations \eqref{append:1} and \eqref{append:2} define two lines between a point $P:=(1,x,y)$ and its Frobenius image $P^{(q)}:=(1,x^q,y^q)$. But the line between a point and its Frobenius image is unique (in fact, it is the tangent line at $P$). Therefore, $1^{q_0}$ is proportional to $H_{1,2}$, $-x^{q_0}$ is proportional to $H_{3,1}$, and $y^{q_0}$ is proportional to $H_{2,3}$. This is summarized in Table \ref{table:H}.
\begin{table}[htb!]
\[
\begin{array}{llll}
f =     &1   &\quad f^{q_0} \sim  &H_{1,2} =[1,x]\\
          &x &              &H_{1,3}=[1,y] \\
          &y &              &H_{2,3}=[x,y] \\
\end{array}
\]
\caption{The Pl\"ucker Coordinates of the Hermitian Curve.}\label{table:H}
\end{table}

Now we give a visual way to list the equation of the Hermitian curve using a complete graph with three vertices. The advantage of using a complete graph to read the equation of the curve is that it will be easy to generalize the same interpretation later for the Suzuki and Ree curves. We construct a triangle with vertices corresponding to the functions $1,x,y$ and the edge between any two vertices is labeled by the function that corresponds to the Pl\"{u}cker coordinate of the two vertices in Table \ref{table:H}, i.e., the function $f$ with $f^{q_0}\sim H_{i,j}$ in Table \ref{table:H}. For example, the edge between $1,x$ is labeled with $1$ because we have $1^{q_0}\sim H_{1,2}$ in Table \ref{table:H}. Therefore, we get the following graph

\begin{figure}[htb!]
\centering

\begin{tikzpicture}
  [scale=1,auto=left,]
  \node [circle,draw] (a) at (1,3) {$1$};
  \node [circle,draw] (b) at (3,4)  {$x$};
  \node [circle,draw] (c) at (3,2)  {$y$};

  \draw (a)[->] -- node {$1$} (b);
  \draw (a)[->] -- node[swap]{$x$} (c);
  \draw (b)[->] -- node {$y$} (c);

\end{tikzpicture}
\caption{The complete graph with three vertices.}\label{figure:32}
\end{figure}
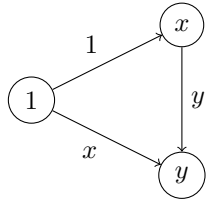

Now we can read the equation of the Hermitian curve as follows. We raise the vertices to the power of $q_0$ and we multiply them by the opposite edge and we sum the result to get
\[
 y\cdot 1^{q_0}-x\cdot x^{q_0}+1\cdot y^{q_0}=0,
\]
where the clockwise orientation is the positive orientation, i.e., the arrows in the clockwise direction are positive and the arrows in the counter clockwise direction are negative.

\begin{remark}
 The motivation to use the complete graph with three vertices is the two dimensional root system in Figure~\ref{table1:RootSystem} (a) by taking the short roots $\alpha$ to be $1$, $-\alpha$ to be $y$, and the origin to be $x$.
\end{remark}

 Next, we carry out this construction for the Suzuki curve which has a model with a similar description using Pl\"{u}cker coordinates. To see the correspondence between these techniques we denote the function $1$ by $x_0$, $x$ by $x_1$, and $y$ by $x_2$.

\subsection{The Suzuki Curve}\label{sec1:3.2}

The Suzuki curve has been studied in detail in \cite{TGK},\cite{HS},\cite{Henn},\cite{Tor3}. It is defined over the finite field $\fgq$ of characteristics 2, where $q:=2q_0^2=2^{2m+1}$ $(m \in \NN)$, and corresponds to the Suzuki function field $\fs:=\fgq(x,y)$ defined over $\fgq$ by the affine equation $y^q-y=x^{q_0}(x^q-x)$ \cite{HS}. The Suzuki function field $\fs \fieldextension \fgq$ has $q^2+1$ $\fgq$-rational places with one place at infinity $P_\infty$ and is of genus $\gs=q_0(q-1)$. Therefore, the Suzuki function field is optimal with respect to Serre's explicit formula method. 
The number of $\fg{q^r}$-rational places is given by
\begin{equation} \label{eq:Suzfqm}
 N_r=q^r+1-\gs q_0^r\left [ (-1+i)^r +(-1-i)^r \right ].
\end{equation}
In particular, the Suzuki curve is maximal if $r \equiv 0 \pmod{4}$. Moreover, the Suzuki function field is the unique function field of genus $q_0(q-1)$ and $q^2+1$ $\fgq$-rational places over $\fgq$ \cite[Theorem 5.1]{FTor1}. The automorphism group of $\fs \fieldextension \fgq$ is the Suzuki group $\BB=\Sz(q)$ \cite{Henn} of order $q^2(q-1)(q^2+1)$. For this reason it is known as the Suzuki curve.

From the above, the Suzuki curve $\xs$ has a projective irreducible plane model defined in $\PP^2(\fgqc)$ by the homogeneous equation
\[
 t^{q_0}(y^q-yt^{q-1})=x^{q_0}(x^q-xt^{q-1})
\]
which is a curve with a singularity only at the point at infinity $P_\infty=[0:0:1]$.

As in \cite{HS}, let $\nu_\infty$ be the discrete valuation of $\fs$ at the place $P_\infty$ and define two more functions $z:=x^{2q_0+1}-y^{2q_0}$ and $w:=xy^{2q_0}-z^{2q_0}$. Then, we have that the functions $x,y,z$, and $w$ are regular outside $P_\infty$ with pole orders at $P_\infty$ as in Table \ref{table:3.1}.

\begin{table}[htb!]
\begin{center}
\begin{tabular}{cccccc}
\toprule
$f$ & 1 & $x$ & $y$ & $z$ & $w$   \\
\midrule
$-\nu_\infty(f)$ & 0 & $q$ & $q+q_0$ & $q+2q_0$ & $q+2q_0+1$ \\
\bottomrule
\end{tabular}
\end{center}
\caption{The pole orders of $1,x,y,z,w$ at $P_\infty$.}
\label{table:3.1}
\end{table}

Moreover, the monoid $\langle q,\, q+q_0,\, q+2q_0,\, q+2q_0+1 \rangle$ is equal to the Weierstrass non-gaps semigroup $H(P_\infty)$ \cite{HS}.

To find a smooth model for the Suzuki curve $\xs$, let $\mathcal{S}$ be its normalization. Giulietti, K\'orchm\'aros, and Torres \cite{TGK} used the divisor $H:=mP_\infty$, where $m:=q+2q_0+1=-\nu_\infty(w)=h(1)$ (where $h(t)=q+2q_0t+t^2\in \ZZ[t]$ is the product of the irreducible factors of the reciprocal of the $L$-polynomial of $\xs$) and considered the complete linear series $\Da:=\left | (q+2q_0+1)P_\infty \right| $. Then, we have
\begin{prop}\label{prop:3.1}
With the notations above, we have:
\begin{enumerate}
 \item $\LL(mP_\infty)$ is generated by $1,x,y,z,w$ and so $\Da$ has dimension 4.
 \item $\Da$ is a very ample linear series.
 \end{enumerate}
\end{prop}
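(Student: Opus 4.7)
The plan is to derive (1) from the explicit Weierstrass semigroup $H(P_\infty) = \langle q,\, q+q_0,\, q+2q_0,\, q+2q_0+1\rangle$ recorded above, and then to establish (2) by verifying the standard numerical criterion that $\ell(mP_\infty - E) = \ell(mP_\infty) - 2$ for every effective divisor $E$ of degree two on $\mathcal{S}$.

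For (1), the five functions $1, x, y, z, w$ have the distinct pole orders $0, q, q+q_0, q+2q_0, q+2q_0+1$ displayed in Table \ref{table:3.1}, each at most $m$, so they belong to $\LL(mP_\infty)$ and are linearly independent. To see that they span, I count nongaps in the interval $[0, m]$: any non-negative integer combination of two or more of the listed generators is at least $2q = 4q_0^2$, which exceeds $m = 2q_0^2 + 2q_0 + 1$ for every admissible $q_0 \ge 2$. Hence the nongaps up to $m$ are exactly the five listed pole orders, giving $\ell(mP_\infty) = 5$ and $\dim \Da = 4$.

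For (2), I split into three cases depending on how $E$ meets $P_\infty$. If $E = 2P_\infty$, the same semigroup count yields $\ell((m-2)P_\infty) = 3$, since the nongaps $\le q + 2q_0 - 1$ are $\{0, q, q+q_0\}$. If $E = P_\infty + Q$ with $Q \neq P_\infty$, the count gives $\ell((m-1)P_\infty) = 4$, and because $1$ does not vanish at $Q$ the evaluation map at $Q$ is surjective, so $\ell((m-1)P_\infty - Q) = 3$. The essential case is $E = P + Q$ with $P, Q \neq P_\infty$ (possibly equal). Here I use the observation that in characteristic two the affine equation $F(x, y) = y^q - y - x^{q_0}(x^q - x)$ satisfies $\partial F/\partial y = -1 \neq 0$, so the affine plane model $V(F)$ is smooth; thus the restriction $\mathcal{S} \setminus \{P_\infty\} \to V(F)$ of the normalization is an isomorphism, and the three functions $\{1, x, y\} \subset \LL(mP_\infty)$ already separate distinct affine points and their tangent vectors. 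This furnishes two independent local conditions on $\LL(mP_\infty)$ at $E$, giving $\ell(mP_\infty - P - Q) \le 3$, which matches the trivial lower bound $\ell(mP_\infty) - 2$.

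The main obstacle is this third case: once the characteristic-two smoothness of the plane model $V(F)$ is in hand, injectivity and the immersion property on the affine part reduce to the isomorphism property of the normalization there, while the immersion at $P_\infty$ itself is already guaranteed by Case (i). The remaining steps are bookkeeping with the Weierstrass semigroup. The result is essentially that of Giulietti, K\'orchm\'aros, and Torres \cite{TGK}, and the same template will be generalized to the Ree curve in Section \ref{sec1:5}.
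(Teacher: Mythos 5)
Your argument is correct, but it takes a genuinely different route from the paper's. The paper disposes of Proposition \ref{prop:3.1} by citation alone: part (1) to \cite{HS} and part (2) to Theorem 3.1 of \cite{TGK}, whose proof (the one the paper replicates for the Ree curve in Theorem \ref{prop:5.5}) rests on the linear equivalence $qP + 2q_0\Phi(P) + \Phi^2(P) \sim mP_\infty$ obtained from the characteristic polynomial of Frobenius acting on the Jacobian: injectivity of the associated morphism is extracted by comparing the supports of these divisors for two points $P,Q$, and separation of tangent vectors by showing $j_1(P)=1$ through order sequences and the transitive action of the Suzuki group. Your proof instead combines the known semigroup $H(P_\infty)=\langle q,\,q+q_0,\,q+2q_0,\,q+2q_0+1\rangle$ with the characteristic-two observation $\partial F/\partial y=-1$, so that the affine plane model is smooth, $(1:x:y)$ is already an immersion away from $P_\infty$, and the remaining degree-two conditions at $P_\infty$ reduce to counting nongaps. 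This is exactly the strategy of \cite[Proposition 8]{Suzz} that the paper itself adopts later in Proposition \ref{prop1:proposition8} for the Ree curve, so in effect you have written the Suzuki analogue of the paper's own later argument. What the Frobenius-equivalence route buys is independence from explicit knowledge of the gap sequence and of a smooth affine model (which is why it transfers to situations where those are not yet available); what your route buys is brevity and elementarity once the semigroup is granted. Two small points worth making explicit: the identification $\mathcal{S}\setminus\{P_\infty\}\cong V(F)$ uses that $P_\infty$ is the \emph{unique} place of $\mathcal{S}$ over the singular point $[0:0:1]$ (the paper records this as ``one place at infinity''), and the inequality $2q>m$ requires $q_0\ge 2$, i.e.\ excludes the degenerate case $q=2$.
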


\begin{proof}
See \cite{HS} and Theorem 3.1 in \cite{TGK}.
\end{proof}
Using Proposition \ref{prop:3.1} above, we get a smooth embedding
\begin{align*}
 \pi : \mathcal{S} &\to \PP^4(\fgqc) \\
       P &\mapsto (1:x:y:z:w).   
\end{align*}

Now we give a concrete realization of the smooth embedding for the Suzuki curve in the projective space from \cite{TGK}. Since $y=x^{q_0+1}-z^{q_0}$ and $w=x^{2q_0+2}-xz+z^{2q_0}$ ($\ast$), define the embedding of $\mathcal{S}$ to be the variety in $\PP^4(\fgqc)$ defined by the set of points
\[
 P_{(a,c)}:=(1: a:b :c:d) \text{ and } \pi(P_\infty)=(0:0:0:0:1)
\]
where $x=a,z=c \in \fgqc$, and $y=b,w=d \in \fgqc$ are satisfying the two Equations ($\ast$) above. Moreover, the Suzuki group acts linearly on $\xs$ if it is considered as a subgroup of Aut($\PP^4(\fgqc)$) \cite[Theorem 3.2]{TGK}.

In this section we consider a different approach to construct a smooth model for the Suzuki curve which is similar to the Hermitian curve (Remark \ref{ch1:hermitianremrk}). The idea is to use the Pl\"ucker coordinates of the unique line between a point and its Frobenius image. The construction will be applied later for the Ree curve in Section \ref{sec1:4}. We describe it in terms of general variables. Let $x_{-2},x_{-1},x_{0},x_{1},x_{2}$ be the functions $t=1,x,y,z,w$, respectively. A preliminary results for the Suzuki curve announced in \cite{DEmail},\cite{DTalk} came up with the following system of equations
\begin{align*}
x_0^2+x_{-1}x_{1}+x_{-2}x_{2}=0\\
\begin{pmatrix}
 0 & x_{-2} & x_{-1} & x_{0} \\
 x_{-2} & 0 & x_{0} & x_{1} \\
 x_{-1} & x_{0} & 0 & x_{2} \\
 x_{0} & x_{1} & x_{2} &0
\end{pmatrix}
 \begin{pmatrix}
 x_{2}\\
 x_{1}\\
 x_{-1}\\
 x_{-2}
\end{pmatrix}^{(q_0)}
=0,
\end{align*}

i.e.,
\begin{align}
&y^2+xz+tw=0,\label{eq:s5}\\
&tz^{q_0}+xx^{q_0}+yt^{q_0}=0,\label{eq:s1}\\
&tw^{q_0}+yx^{q_0}+zt^{q_0}=0,\label{eq:s2}\\
&xw^{q_0}+yz^{q_0}+wt^{q_0}=0,\label{eq:s3}\\
&yw^{q_0}+zz^{q_0}+wx^{q_0}=0.\label{eq:s4}
\end{align}

\begin{lemma}\label{sec2:lemma5}
 The five equations above define the Suzuki curve.
\end{lemma}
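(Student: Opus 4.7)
The plan is to show that the projective variety $V \subset \PP^4(\fgqc)$ cut out by equations \eqref{eq:s5}--\eqref{eq:s4} coincides set-theoretically with the image $\pi(\mathcal{S})$ of the Suzuki curve under the smooth embedding of Proposition \ref{prop:3.1}. Since $\pi(\mathcal{S})$ is already known to be an irreducible smooth projective curve, equality will follow. All manipulations take place in characteristic $2$.

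For the inclusion $\pi(\mathcal{S}) \subseteq V$, I would first record three intermediate identities valid on the Suzuki curve:
\[
z^{q_0} \;=\; y + x^{q_0+1}, \qquad z^{q} - z \;=\; x^{2q_0}(x^{q}-x), \qquad w^{q_0} \;=\; x^{q_0}y + z.
\]
The first comes from applying Frobenius to $z = x^{2q_0+1} + y^{2q_0}$ and invoking the Suzuki relation $y^{q} = y + x^{q_0}(x^{q}-x)$. The second comes from raising the first to the $(2q_0)$-th power and subtracting $z$. The third comes from expanding $w^{q_0} = x^{q_0}y^{q} + z^{q}$ and applying the previous two identities. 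Each of \eqref{eq:s5}--\eqref{eq:s4} then collapses to $0 = 0$ after a short substitution; for instance $(z^{q_0})^{2} = y^{2} + x^{2q_0+2}$ gives \eqref{eq:s5} immediately, while $w^{q_0} + y x^{q_0} + z = (x^{q_0}y + z) + yx^{q_0} + z = 0$ gives \eqref{eq:s2}.

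For the reverse inclusion $V \subseteq \pi(\mathcal{S})$, I would work in the affine chart $t = x_{-2} = 1$. Equation \eqref{eq:s1} gives $y = z^{q_0} + x^{q_0+1}$ and equation \eqref{eq:s5} gives $w = y^{2} + xz$, so $y$ and $w$ are already determined as polynomials in $x, z$. Substituting these into \eqref{eq:s2} and simplifying in characteristic $2$ yields the single relation $z^{q} - z = x^{2q_0}(x^{q}-x)$. Under the change of variable $y = z^{q_0}+x^{q_0+1}$ this is equivalent to the Suzuki equation $y^{q} - y = x^{q_0}(x^{q}-x)$, so $\fgq(x, z) = \fgq(x, y) = \fs$; this identifies the affine part of $V$ with $\mathcal{S}\setminus\{P_\infty\}$ via $(x, z) \mapsto (1:x:y:z:w)$. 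Equations \eqref{eq:s3} and \eqref{eq:s4} are then automatic algebraic consequences of \eqref{eq:s5}, \eqref{eq:s1}, \eqref{eq:s2}: after substituting the derived formulas for $y$, $w$ and $w^{q_0}$, each sum of three terms collapses pairwise in characteristic $2$. At the hyperplane $t = 0$, equation \eqref{eq:s1} forces $x = 0$, then \eqref{eq:s5} forces $y = 0$, and \eqref{eq:s4} reduces to $z^{q_0+1} = 0$, hence $z = 0$, leaving only the point $(0{:}0{:}0{:}0{:}1) = \pi(P_\infty)$.

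The main obstacle will be the Frobenius bookkeeping in the forward direction, in particular arriving at the clean identity $w^{q_0} = x^{q_0}y + z$: its derivation uses both the Suzuki relation $y^{q} - y = x^{q_0}(x^{q}-x)$ and its consequence $z^{q} - z = x^{2q_0}(x^{q}-x)$, and the characteristic $2$ cancellations (especially between $x^{q+q_0}$ and $x^{q_0+q}$ terms, or between repeated $y^{2q_0}$ and $x^{q_0}z^{q_0}$ terms) are easy to miscount. Once these three intermediate identities are in hand, each of the five equations reduces to two or three lines of polynomial arithmetic.
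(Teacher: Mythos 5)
Your proposal is correct, and its central computation --- eliminating $y$ and $w$ via \eqref{eq:s1} and \eqref{eq:s5} and reducing \eqref{eq:s2} to an Artin--Schreier relation equivalent to $y^q-y=x^{q_0}(x^q-x)$ --- is the same kind of Frobenius bookkeeping the paper uses. The organization, however, is genuinely different and strictly more complete. The paper argues only in one direction: starting from the five equations it derives $z=x^{2q_0+1}-y^{2q_0}$ (by adding $x^{q_0}\cdot\eqref{eq:s1}$ to \eqref{eq:s2} and invoking $y^{2q_0}=x^{q_0}z^{q_0}+w^{q_0}$ from \eqref{eq:s5}), then $w=xy^{2q_0}-z^{2q_0}$ (by adding $z^{q_0}\cdot\eqref{eq:s1}$ to \eqref{eq:s3}), and finally the Suzuki relation; the converse inclusion and the behaviour on the hyperplane $t=0$ are left implicit. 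You instead prove a set-theoretic equality of projective varieties: the forward inclusion via the three intermediate identities $z^{q_0}=y+x^{q_0+1}$, $z^q-z=x^{2q_0}(x^q-x)$, $w^{q_0}=x^{q_0}y+z$ (all of which check out in characteristic $2$), the reverse inclusion by parametrizing the affine chart by $(x,z)$, and the isolated point $(0{:}0{:}0{:}0{:}1)=\pi(P_\infty)$ at $t=0$. What the paper's route buys is brevity, since for a birational statement one direction of elimination suffices; what yours buys is an actual verification that the zero locus of the five equations is exactly $\pi(\mathcal{S})$ as a projective set, including at infinity, which is the stronger statement one ultimately wants when calling the five equations a complete defining set for the smooth model. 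One minor redundancy: in the reverse inclusion you do not need \eqref{eq:s3} and \eqref{eq:s4} to be consequences of the other three --- that fact is only relevant to the forward inclusion, which you had already established directly.
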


\begin{proof}
 To see that these five equations define the Suzuki curve, we need to show the following equations:
\begin{align*}
 z&=x^{2q_0+1}-y^{2q_0},\\
 w&=xy^{2q_0}-z^{2q_0},\\
 y^q-y&=x^{q_0}(x^q-x).
\end{align*}
To get the first equation $z=x^{2q_0+1}-y^{2q_0}$, we multiply Equation \eqref{eq:s1} by $x^{q_0}$ and we add it to Equation \eqref{eq:s2} to get
\begin{equation}\label{eq:lemma*}
 w^{q_0}+z+x^{q_0}z^{q_0}+x^{2q_0+1}=0
\end{equation}
Now Equation \eqref{eq:s5} yields that $y^{2q_0}=x^{q_0}z^{q_0}+w^{q_0}$, substituting that in \eqref{eq:lemma*},
\[
 y^{2q_0}+z + x^{2q_0+1}=0 \Rightarrow z=x^{2q_0+1}-y^{2q_0}.
\]

To get the second equation $w=xy^{2q_0}-z^{2q_0}$, we multiply Equation \eqref{eq:s1} by $z^{q_0}$ and we add the result to Equation \eqref{eq:s3} to get
\begin{align*}
 z^{2q_0}+x^{q_0+1}z^{q_0}+xw^{q_0}+w &=0\\
 z^{2q_0}+x(x^{q_0}z^{q_0}+w^{q_0})+w &=0\\
 z^{2q_0}+xy^{2q_0} +w &=0 \Rightarrow w=xy^{2q_0}-z^{2q_0}.
\end{align*}

Finally, we show the last equation $y^q-y=x^{q_0}(x^q-x)$ as follows
\begin{align*}
 y^q-y = (y^{2q_0})^{q_0}-y &= (z+x^{2q_0+1})^{q_0} - (z^{q_0}+x^{q_0+1})\\
                            &= z^{q_0}+x^{q+q_0}-z^{q_0}-x^{q_0+1}\\
                            &= x^{q+q_0}-x^{q_0+1}\\
                            &= x^{q_0}(x^q-x). \qedhere
\end{align*}
\end{proof}

\begin{remark}
 In \cite{TGK}, the authors used only Equations \eqref{eq:s1}, \eqref{eq:s3} to define the Suzuki curve, see Equations ($\ast$). Lemma \ref{sec2:lemma5} shows that the five equations form a complete set of equations to define the Suzuki curve.
\end{remark}

\begin{remark}\label{ch1:suzukiremrk}
From the five Equations \eqref{eq:s5}--\eqref{eq:s4} above, it follows that
\[
 \begin{pmatrix}
   0 & t^{2q_0} & x^{2q_0} & y^{2q_0} \\
   t^{2q_0} & 0 & y^{2q_0} & z^{2q_0} \\
   x^{2q_0} & y^{2q_0} & 0 & w^{2q_0} \\
   y^{2q_0} & z^{2q_0} & w^{2q_0} & 0
 \end{pmatrix}
\begin{pmatrix}
 w & w^q \\
 z & z^q \\
 x & z^q \\
 t & t^q
\end{pmatrix} = 0.
\]
Consider the following matrix $S$
\[
S = \left( \begin{array}{lllllll}
1 &:~x &:~z &:~w \\ \medskip
1 &:~x^q &:~z^q &:~w^q
\end{array} \right).
\]
Let $S_{i,j}$ be the Pl\"ucker coordinates of the matrix $S$. Then, we also have
\begin{equation}\label{eq:MatS}
 \begin{pmatrix}
   0 & S_{1,2} & S_{1,3} & S_{3,2} \\
   S_{1,2} & 0 & S_{1,4} & S_{4,2} \\
   S_{1,3} & S_{1,4} & 0 & S_{4,3} \\
   S_{3,2} & S_{4,2} & S_{4,3} & 0 \\
 \end{pmatrix}
\begin{pmatrix}
 w & w^q \\
 z & z^q \\
 x & x^q \\
 t & t^q
\end{pmatrix} = 0.
\end{equation}
As the line between a point and its Frobenius image is unique. Then, as for the Hermitian curve, we obtain the correspondence in Table \ref{table:S}.
\begin{table}[htb!]
\[
\begin{array}{llll}
f =    &1   &\quad f^{2q_0} \sim  &S_{1,2}=[1,x] \\
       &x &              &S_{1,3}=[1,z] \\
       &y &              &S_{1,4} = S_{3,2} ([1,w]=[z,x]) \\
       &z &              &S_{4,2}=[w,x] \\
       &w &              &S_{4,3}=[w,z] \\
\end{array}
\]

\caption{The Pl\"ucker Coordinates of the Suzuki Curve.}\label{table:S}
\end{table}
\end{remark}

Now we give a visual way to list the defining equations of the Suzuki curve $\xs$ from a complete graph.

Consider the complete graph with four vertices labeled by $x_{-2},x_{-1},x_{1},x_{2}$ as in Figure \ref{figure1:37}, where the edge between any two vertices $x_i,x_j$ is labeled by the function that corresponds to the Pl\"{u}cker coordinate of $x_i,x_j$ in Table \ref{table:S}. For example, the edge between $x_{-2}=1$ and $x_2=w$ is labeled by the function that corresponds to the Pl\"{u}cker coordinate $S_{1,4} $ which is $x_0=y$.

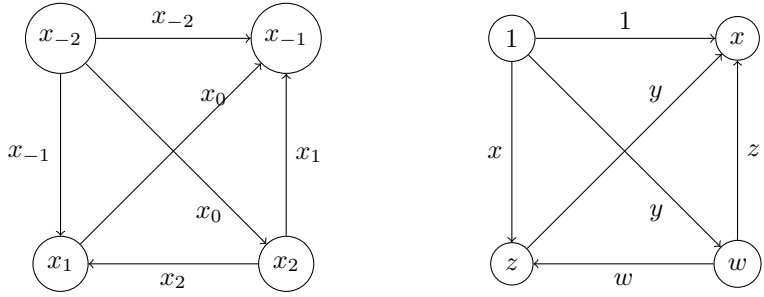
\begin{figure}[htb!]
\centering
\begin{tikzpicture}
  [scale=1,auto=left,]
  \node [circle,draw] (x1) at (1,1) {$x_{1}$};
  \node [circle,draw] (xm2) at (1,4)   {$x_{-2}$};
  \node [circle,draw] (x2) at (4,1)  {$x_{2}$};
  \node [circle,draw] (xm1) at (4,4)  {$x_{-1}$};

  \draw [->] (xm2) -- node[swap]{$x_{-1}$} (x1);
  \draw [->] (x2) -- node{$x_2$} (x1);
  \draw [->] (xm2) -- node[xshift=2mm,yshift=5.2mm]{$x_0$} (x2);
  \draw [->] (x1) -- node[swap,xshift=2.1mm,yshift=-5.8mm]{$x_{0}$} (xm1);
  \draw [->] (xm2) -- node{$x_{-2}$} (xm1);
  \draw [->] (x2) -- node[swap]{$x_1$} (xm1);

  [scale=1,auto=left,]
  \node [circle,draw] (x1) at (7,1) {$z$};
  \node [circle,draw] (xm2) at (7,4)   {$1$};
  \node [circle,draw] (x2) at (10,1)  {$w$};
  \node [circle,draw] (xm1) at (10,4)  {$x$};

  \draw [->] (xm2) -- node[swap]{$x$} (x1);
  \draw [->] (x2) -- node{$w$} (x1);
  \draw [->] (xm2) -- node[xshift=2mm,yshift=5.2mm]{$y$} (x2);
  \draw [->] (x1) -- node[swap,xshift=2.1mm,yshift=-5.8mm]{$y$} (xm1);
  \draw [->] (xm2) -- node{$1$} (xm1);
  \draw [->] (x2) -- node[swap]{$z$} (xm1);
\end{tikzpicture}
\caption{The complete graph with four vertices.}\label{figure1:37}
\end{figure}

Now to get the equations of total degree $q_0+1$ \eqref{eq:s1}--\eqref{eq:s2}, we consider any triangle in the polygon. We raise every vertex in the triangle to the power $q_0$ and we multiply it by the label of the opposite edge. Then we add them all to the equations of total degree $q_0+1$, e.g., if we consider the triangle in Figure \ref{figure2:37}, then we get the equation $aA^{q_0}+bB^{q_0}+cC^{q_0}=0$.

\begin{figure}[htb!]
\centering
\begin{tikzpicture}
  [scale=1,auto=left,]
  \node [circle,draw] (a) at (1,3) {$A$};
  \node [circle,draw] (b) at (3,4)  {$B$};
  \node [circle,draw] (c) at (3,2)  {$C$};

  \draw [->](a) -- node {$c$} (b);
  \draw [<-](a) -- node[swap]{$b$} (c);
  \draw [->](b) -- node {$a$} (c);

\end{tikzpicture}
\caption{Triangle.}\label{figure2:37}
\end{figure}
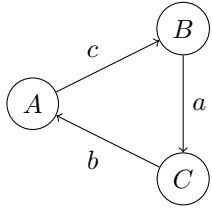
Therefore, we get the following four equations
\begin{alignat*}{3}
&x_{0}x^{q_0}_{-2}+x_{-1}x^{q_0}_{-1}+x_{-2}x^{q_0}_{1}=0,     \quad  &&tz^{q_0}+xx^{q_0}+yt^{q_0}=0 \qquad \qquad \quad &\eqref{eq:s1},\\ 
&x_{2}x^{q_0}_{-2}+x_{0}x^{q_0}_{1}+x_{-1}x^{q_0}_{2}=0,      \qquad\qquad \text{                         i.e.,} \qquad \qquad  		 &&tw^{q_0}+yx^{q_0}+zt^{q_0}=0 \quad\qquad \qquad &\eqref{eq:s2},\\ 
&x_{1}x^{q_0}_{-2}+x_{0}x^{q_0}_{-1}+x_{-2}x^{q_0}_{2}=0 ,     \quad  &&xw^{q_0}+yz^{q_0}+wt^{q_0}=0 \qquad \qquad \quad &\eqref{eq:s3},\\
&x_{2}x^{q_0}_{-1}+x_{1}x^{q_0}_{1}+x_{0}x^{q_0}_{2}=0,        \quad  &&yw^{q_0}+zz^{q_0}+wx^{q_0}=0 \qquad \qquad \quad &\eqref{eq:s4}.
\end{alignat*}

Since we have only four triangles in the polygon of four vertices, the equations above are the only equations of degree $q_0+1$.

To get the quadratic equation, we consider the polygon of four vertices. The product of the two diagonals plus the sum of the product of the opposite edges is equal to $0$, i.e.,
\[
 x_0^2+x_{-1}x_1+x_{-2}x_2=0, \qquad \text{i.e.,}  \qquad y^2+xz+tw=0.
\]

Finally, we note that we can get the quadratic equation (up to a $q_0$ power of a hyperplane) from the four equations of degree $q_0+1$ as follows. We fix any vertex, say $x_{-2}$ and we multiply each equation that contains $x_{-2}^{q_0}$ by the label of the missing edge of $x_{-2}$ in the triangle that define that equation, i.e., we multiply the \eqref{eq:s1} by $x_{0}$ (because $x_0$ is the missing edge of $x_{-2}$ in the triangle $\Delta x_{-2}x_{-1}x_{0}$ inside the polygon), \eqref{eq:s2} by $x_{-2}$, and \eqref{eq:s3} by $x_{-1}$ to get
\begin{align*}
 &x^2_{0}x^{q_0}_{-2}+x_{0}x_{-1}x^{q_0}_{-1}+x_{0}x_{-2}x^{q_0}_{1}=0,\\  
&x_{-2}x_{2}x^{q_0}_{-2}+x_{-2}x_{0}x^{q_0}_{1}+x_{-2}x_{-1}x^{q_0}_{2}=0.\\    
&x_{-1}x_{1}x^{q_0}_{-2}+x_{-1}x_{0}x^{q_0}_{-1}+x_{-1}x_{-2}x^{q_0}_{2}=0.
\end{align*}
Now we add the equations above, we get that $(x_0^2+x_{-1}x_1+x_{-2}x_2)x^{q_0}_2=0$. In other words, the four equations of degree $q_0+1$ define a reducible variety with one component the Suzuki curve and the remaining components are intersections of hyperplanes.

\begin{remark}
Another way to get the quadratic equation is to notice that the matrix in \eqref{eq:MatS} is singular. The determinant is a power of the the quadratic equation.
\end{remark}

\begin{remark}
 The motivation to use the graph above with four vertices is the two dimensional root system in Figure~\ref{table1:RootSystem} (b) by taking only the short roots $\alpha$, $\alpha+\beta$, $-\alpha$, and $-(\alpha+\beta)$.
\end{remark}

In Section \ref{sec1:4} we will apply the techniques that are used in this section to get a set of equations that define a smooth model for the Ree curve in $\PP^{13}(\fgqc)$. We mention that, although there are similarities between these Deligne-Lusztig curves in constructing a very ample linear series and smooth embeddings, there are some differences, for example, in the Weierstrass non-gaps semigroup at $P_\infty$. More specifically, the pole orders of the functions that give the smooth embeddings for the Hermitian or Suzuki curve generate the Weierstrass non-gaps semigroup at $P_\infty$ while this is not the case for the Ree curve (see Section \ref{sec1:8.1}).




\section{The Defining Equations and Automorphism group of the Ree Curve}\label{sec1:4}

\subsection{The Defining Equations for the Ree Curve}

In this section we give 105 equations that define a smooth model for the Ree curve in the projective space $\ppfgqc$. The embedding will be similar to the embeddings of the Hermitian and Suzuki curves using 14 functions that were defined by Pedersen \cite{Ped}. The general idea of this section is to apply the observations made in Sections \ref{sec1:3.1} and \ref{sec1:3.2} about the Hermitian and Suzuki curves.

We start with some notations and results about the Ree curve from \cite{Ped}. Let $m \in \NN$, $q_0:=3^m$, and $q:=3q_0^2$. Define the Ree function field $\fr:=\fgq(x,y_1,y_2)$ over $\fgq$ by the equations:
\begin{align}
 y_1^q-y_1 &=x^{q_0}(x^q-x),\label{eq:Ree1}\\
 y_2^q-y_2 &=x^{q_0}(y_1^q-y_1).\label{eq:Ree2}
\end{align}

The Ree function field $\fr \fieldextension \fgq(x)$ is a finite separable field extension of degree $q^2$. All affine rational places $Q_{a}=(x=a)\in \ppfx (a \in \fgq)$ split completely into $q^2$ rational places $P_{a,b,c}=(x=a,y_1=b,y_2=c)\in \ppfr$. Let $Q_{\infty}\in \ppfx$ be the pole of $x$ in $\fgq(x)$ and $P_{\infty}\in \ppfr$ be a place of $\fr$ lying above $Q_{\infty}$. Then, $P_{\infty}\placeextension Q_{\infty}$ is totally ramified in $\fr \fieldextension \fgq(x)$ with ramification index $e_{\infty}:=e(P_{\infty}\placeextension Q_{\infty})=q^2$. Thus, the function field $\fr$ has $q^3+1$ $\fgq$-rational places and is of genus $\gr=(\sfrac{3}{2})q_0(q-1)(q+q_0+1)$. Therefore, it is an optimal curve with respect to Serre's explicit formula method. The number of $\fg{q^r}$-rational places is given by 
\begin{equation} \label{eq:Reefqm}
 N_r=q^r+1-\sqrt{3}q_0^2(q-1)\left [ (q+q_0+1)\cos \left ( \frac{r\pi}{6}\right ) +2(q+1)\cos \left ( \frac{5r\pi}{6} \right )    \right ].
\end{equation}
In particular, the Ree curve is maximal if $r \equiv 6 \pmod{12}$.

The automorphism group of this function field is the Ree group $R(q)=\,\GG(q)$ of order $q^3(q-1)(q^3+1)$. For this reason it is known as the Ree function field. Moreover, by the result of Hansen and Pedersen \cite{HP}, the Ree function field is the unique function field of genus $\gr=\sfrac{3q_0(q-1)(q+q_0+1)}{2}$, number of $\fgq$-rational places equals to $q^3+1$, and automorphism group $\GG$. Denote by $\nu_0$ and $\nu_{\infty}$ the valuations at the places $P_{000}$ and $P_{\infty}$ respectively. Pedersen \cite{Ped} constructed the following ten functions $w_1,\dots,w_{10}\in \fr$:

\begin{minipage}{.45\linewidth}
\begin{align}
& w_1:=x^{3q_0+1} - y_1^{3q_0},\label{ReeEq:1}\\
& w_2:=xy_1^{3q_0}-y_2^{3q_0},\label{ReeEq:2}\\
& w_3:=xy_2^{3q_0}-w_1^{3q_0},\label{ReeEq:3}\\
& w_4:= xw_2^{q_0}-y_1w_1^{q_0},\label{ReeEq:4}\\
& v:=xw_3^{q_0}-y_2w_1^{q_0},\label{ReeEq:v}\\
 & w_5:=y_1w3_{q_0}-y_2w_1^{q_0},\label{ReeEq:5}\\ \notag
\end{align}
\end{minipage}%
\begin{minipage}{.47\linewidth}
\begin{alignat}{2}
 & w_6:&&=v^{3q_0}-w_2^{3q_0}+xw_4^{3q_0},\label{ReeEq:6}\\
 & w_7:&&=y_1w_3^{q_0}-xw_3^{q_0}-w_6^{q_0}\label{ReeEq:7} \\
 & \, &&=w_2+v, \notag \\
 & w_8:&&= w_6^{3q_0}+xw_7^{3q_0},\label{ReeEq:8}\\
 & w_9:&&=w_4w_2^{q_0} -y_1w_6^{q_0},\label{ReeEq:9}\\
 & w_{10}:&&=y_2w_6^{q_0}-w_3^{q_0}w_4.\label{ReeEq:10} \\ \notag
\end{alignat}

\end{minipage}

We remark here that some of these equations were already used by Tits \cite{Tits} to describe the generators of the Ree group and to show it is a simple group acting 2-transitively on a set of $q^3+1$ points.

\begin{remark}\label{sec1:ZeroPole}
We classify the 14 functions $1,x,y_1,y_2,w_1,\dots,w_{10}$ into two sets of variables denoted by $x_{-3}, x_{-2},\dots,x_{3}$, and $y_{-3}, y_{-2}, \dots, y_{3}$, as illustrated in Table \ref{table:4.2}. We also have the auxiliary functions $z_1:=w_7$, $z_2:=w_7+w_2$, and $z_3:=w_7-w_2$ which satisfy $z_1+z_2+z_3=0$.

\begin{table}[htb!]
\begin{center}
\begin{tabular}{  c  c  c c  c c c}
\toprule
$x_{-3}$ & $x_{-2}$ & $x_{-1}$ & $x_{0}$ & $x_{1}$ & $x_{2}$ & $x_{3}$ \\
\midrule
$w_{1}$ & $x$ & $-w_8$ & $w_2$ & $1$ & $-w_6$ & $-w_3$ \\
\midrule

\midrule
$y_{-3}$ & $y_{-2}$ & $y_{-1}$ & $y_{0}$ & $y_{1}$ & $y_{2}$ & $y_{3}$ \\
\midrule
$w_{4}$ & $-y_{2}$ & $-w_{10}$ & $w_7$ & $y_{1}$ & $w_{9}$ & $-w_{5}$ \\
\bottomrule
 \end{tabular}
\end{center}
\caption{The correspondence between the 14 functions and $x_i$'s, $y_i$'s.}
\label{table:4.2}
\end{table}
Consider the involution automorphism $\phi:\fr \to \fr$ acting by $x_{i}\mapsto \sfrac{x_{-i}}{x_{-1}}$ and $y_{i}\mapsto \sfrac{y_{-i}}{x_{-1}}$ ($i=0,1,2,3$). The automorphism $\phi$ sends the place $P_{000}$ to the place $P_{\infty}$. Therefore, the pole order and the zero order of $x_i$ (resp. $y_i$) and $x_{-i}$ (resp. $y_{-i}$) are related by
\[
 \nu_{0}(x_i)=-\nu_{\infty}(w_8)+\nu_{\infty}(x_{-i})\quad \text{resp.}\quad  \nu_{0}(y_i)=-\nu_{\infty}(w_8)+\nu_{\infty}(y_{-i}).
\]
The valuations of the 14 functions at $P_{000}$ and $P_{\infty}$ are summarized in Table \ref{table:4.1}.

\end{remark}

The Ree curve is birationally equivalent to the projective curve in $\pp^{3}(\fgqc)$ defined by the $2\times 2$-minors of the matrix
\[
 \begin{pmatrix}
X^{q_0} & -(Y_1^{q}-Y_{1}U^{q-1}) & -(Y_2^{q}-Y_{2}U^{q-1})\\
U^{q_0} & -(X^{q}-XU^{q-1}) & -(Y_1^{q}-Y_{1}U^{q-1})
\end{pmatrix}.
\]
This curve has a singularity at the point at infinity $[0:0:0:1]$ which corresponds to the place $P_{\infty}$ of $\fr$. Moreover, the Ree curve has a singular plane model in which $\fr$ is defined as an Artin-Schreier extension in the variables $x$ and $w_2$ (see \cite{Ped}).

One central problem from Pedersen paper \cite{Ped} is the following problem.

\begin{problem}
Compute the Weierstrass non-gaps semigroup $H(P_\infty)$.
\end{problem}

In order to solve the problem above, we have found 105 equations in $1,x,y_1,y_2$, $w_1,\dots,w_{10}$. These equations are then used to compute all the non-gaps at $P_\infty$ over $\fg{27}$. (see Section \ref{sec1:8}). Moreover, these equations define a smooth embedding for the Ree curve in $\ppfgqc$ using the functions $1,x,y_1,y_2,w_1,\dots,w_{10}$. These equations which are listed in Appendix \ref{app:1} can be described as follows:


\begin{description}
 \item[Set 1] Equations of total degree $q_0+1$ of the form $aA^{q_0}+bB^{q_0}+cC^{q_0}=0$, where the functions $A,B,C \in \{ 1,x,w_1,w_2,w_3,w_6,w_8 \}$ and $a,b,c \in \{1,x,y_1,y_2,w_1,\dots,w_{10} \}$.

  \item[Set 2] Equations of total degree $3q_0+1$ of the form $a^{3q_0}A+b^{3q_0}B+c^{3q_0}C=0$, where $a,b,c,A,B,C$ are the functions in Set 1.

  \item[Set 3] One quadratic equation $-w_2^2+w_8+xw_6+w_1w_3=0$.

  \item[Set 4] Quadratic equations.
\end{description}

\begin{lemma} \label{sec3:lemma105}
The 105 equations in Appendix \ref{app:1} define the Ree curve.
\end{lemma}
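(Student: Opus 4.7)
The plan is to mimic the proof of Lemma~\ref{sec2:lemma5} for the Suzuki curve: take the $105$ equations as given and show that they force the Pedersen relations \eqref{ReeEq:1}--\eqref{ReeEq:10} defining $w_1,\dots,w_{10}$, together with the two Artin--Schreier equations \eqref{eq:Ree1}--\eqref{eq:Ree2} that define $F_{\text{R}}=\fgq(x,y_1,y_2)$. Once all of these hold, the subvariety cut out by the $105$ equations coincides with the image in $\mathbb{P}^{13}$ of the Ree curve under the map given by the $14$ functions $1,x,y_1,y_2,w_1,\dots,w_{10}$, and we are done. The proof is therefore an orchestrated sequence of substitutions, exactly in the spirit of the Suzuki argument where Equations \eqref{eq:s1}--\eqref{eq:s4} were combined in pairs to extract $z$, then $w$, then the defining Artin--Schreier equation.

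The steps I would carry out, in order, are as follows. First, I would isolate within Set~2 (the equations of total degree $3q_0+1$) the three relations that reproduce $w_1=x^{3q_0+1}-y_1^{3q_0}$, $w_2=xy_1^{3q_0}-y_2^{3q_0}$, and $w_3=xy_2^{3q_0}-w_1^{3q_0}$; these are the simplest and correspond to triangles involving only the ``$x$-variables'' $1,x,w_1,w_2,w_3,w_6,w_8$. Second, I would pass to Set~1 (total degree $q_0+1$) and combine the triangles containing $x,y_1,w_1,w_2$ and $x,y_2,w_1,w_3$ to recover the Pedersen formulas \eqref{ReeEq:4}--\eqref{ReeEq:5} for $w_4,v,w_5$. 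Third, I would use the quadratic equation in Set~3, $-w_2^2+w_8+xw_6+w_1w_3=0$, together with the appropriate Set~2 triangle, to substitute and recover $w_6, w_7$ (noting the textual identity $w_7=w_2+v$) and then $w_8$. Fourth, the remaining Set~1 triangles involving $y_1,y_2,w_3,w_4,w_6$ yield $w_9$ and $w_{10}$.

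With all of $w_1,\dots,w_{10}$ identified as the Pedersen functions of $x,y_1,y_2$, the Artin--Schreier equations come out by exactly the trick of Lemma~\ref{sec2:lemma5}. For \eqref{eq:Ree1}, I would raise $w_1=x^{3q_0+1}-y_1^{3q_0}$ to the $q_0$-th power and manipulate to obtain
\[
y_1^q-y_1 \;=\; (y_1^{3q_0})^{q_0}-y_1 \;=\; (x^{3q_0+1}-w_1)^{q_0}-y_1 \;=\; x^{q_0}(x^q-x),
\]
using the recovered relation between $w_1$, $x$, and $y_1$; then for \eqref{eq:Ree2} the analogous computation with $w_2=xy_1^{3q_0}-y_2^{3q_0}$ gives $y_2^q-y_2=x^{q_0}(y_1^q-y_1)$. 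This confirms that the coordinate ring generated subject to the $105$ equations contains the Ree function field, and a dimension/pole-order count (using the valuations at $P_{\infty}$ from Remark~\ref{sec1:ZeroPole}) shows the two coincide, so no extraneous component survives; any ``parasitic'' component arising from a factorization, as in the Suzuki remark after Lemma~\ref{sec2:lemma5}, is cut out by one of the quadratic equations in Set~4.

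The main obstacle is purely bookkeeping: there are $105$ equations and $14$ variables, and one has to pick precisely the right triangles from the complete graph on $7$ vertices, multiply through by the correct monomials, and add in the correct order. Unlike the four-term Suzuki derivation, here each step produces an equation with up to three higher $w_i$'s entangled, so recovering $w_6,w_7,w_8$ requires combining Set~1, Set~2, and Set~3 simultaneously rather than in isolation. I expect the cleanest presentation to group the computations by the subgraphs of the complete graph on $\{x_{-3},\dots,x_3\}$ corresponding to the root-system pattern of Figure~\ref{table1:RootSystem}(c), so that each Pedersen relation is recovered from a single transparent triangle or quadrilateral rather than from an ad hoc linear combination.
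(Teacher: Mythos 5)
Your proposal is correct, and its decisive step --- pairing $y_1=x^{q_0+1}-w_1^{q_0}$ with $w_1=x^{3q_0+1}-y_1^{3q_0}$ (and likewise $y_2=y_1x^{q_0}-w_2^{q_0}$ with $w_2=xy_1^{3q_0}-y_2^{3q_0}$) and raising to the $q_0$-th power to extract the two Artin--Schreier equations \eqref{eq:Ree1} and \eqref{eq:Ree2} --- is exactly the paper's proof. The lengthy preliminary programme of re-deriving all of the Pedersen relations for $w_1,\dots,w_{10}$ from triangles and quadrilaterals is unnecessary for this lemma, since the four relations you actually use are \eqref{app:eq8}, \eqref{app:eq8'}, \eqref{app:eq4}, \eqref{app:eq4'} and appear verbatim among the $105$ equations; the paper reserves that longer combinatorial derivation for the subsequent lemma, where only the $35$ equations of Set~1 are assumed.
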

\begin{proof}
To show that the 105 equations define the Ree curve, we need a birational map between the two models. From the 105 equations we show that the following equations hold:
\begin{align*}
 y_1^{q}-y_1&=x^{q_0}(x^q-x),\\
 y_2^q-y_2&=x^{q_0}(y_1^q-y_1).
\end{align*}
To get the first equation $y_1^{q}-y_1=x^{q_0}(x^q-x)$, we use the two equations $y_1=x^{q_0+1}-w_1^{q_0}$ \eqref{app:eq8} and $w_1=x^{3q_0+1}-y_1^{3q_0}$ \eqref{app:eq8'}. Then, we have
\begin{align*}
 y_1^q-y_1-x^{q+q_0}+x^{q_0+1}&=y_1^{3q_0^2}-y_1-x^{3q_0^2+q_0}+x^{q_0+1}\\
                              &=(y_1^{3q_0}-x^{3q_0+1})^{q_0}+(x^{q_0+1}-y_1)\\
                              &=(-w_1)^{q_0}+w_1^{q_0}\\
                              &=0.
\end{align*}

Similarly, to get the second equation $y_2^q-y_2=x^{q_0}(y_1^q-y_1)$, we use the two equations $y_2=y_1x^{q_0}-w_2^{q_0}$ \eqref{app:eq4} and $w_2=xy_1^{3q_0}-y_2^{3q_0}$ \eqref{app:eq4'}. Then, we have
\begin{align*}
 y_2^q-y_2-x^{q_0}y_1^q+x^{q_0}y_1&=y_2^{3q_0^2}-y_2-x^{q_0}y_1^{3q_0^2}+y_1x^{q_0}\\
                                  &=(y_2^{3q_0}-xy_1^{3q_0})^{q_0}+(y_1x^{q_0}-y_2) \\
                                  &=(-w_2)^{q_0}+w_2^{q_0}\\
                                  &=0.
\end{align*}
Finally, it is easy to see that the Ree curve satisfy the 105 equations.
\end{proof}

\begin{remark}
Consider the following matrix $R$
\[
R = \left( \begin{array}{lllllll}
1 &:~x &:~w_1 &:~w_2 &:~w_3 &:~w_6 &:~w_8  \\ \medskip
1 &:~x^q &:~w_1^q &:~w_2^q &:~w_3^q &:~w_6^q &:~w_8^q
\end{array} \right).
\]
Then, following the same ideas of Remark \ref{ch1:hermitianremrk} and Remark \ref{ch1:suzukiremrk} together with the equations in Set 2 in Appendix \ref{app:1}, we let a function $f^{3q_0}$ correspond to the Pl\"ucker coordinates of the matrix $R$ as in Table~\ref{table:R}.


\begin{table}[htb!]
\[
\begin{array}{llll}
f =    &1   &\quad f^{3q_0} \sim  &R_{1,2} = [1,x] \\
       &x &                &R_{1,3} = [1,w_1] \\
       &w_1 &              &R_{2,5} = [x,w_3] \\
       &w_3 &              &R_{6,3} = [w_6,w_1]\\
       &w_6 &              &R_{7,5} =[w_8,w_3]\\
       &w_8 &              &R_{7,6} =[w_8,w_6]\\
       &    &              &        \\
       &y_1 &              &R_{2,3} = R_{1,4}\, ([x,w_1]=[1,w_2]) \\
       &y_2 &              &R_{1,5} = R_{2,4}\, ([1,w_3]=[x,w_2])\\
       &w_4 &              &R_{1,6} = R_{4,3}\, ([1,w_6]=[w_2,w_1])\\
       &w_5 &              &R_{7,2} = R_{5,4}\, ([w_8,x]=[w_3,w_2])\\
       &w_9  &             &R_{7,3} = R_{4,6}\, ([w_8,w_1]=[w_2,w_6])\\
       &w_{10} &            &R_{6,5} = R_{4,7}\, ([w_6,w_3]=[w_2,w_8])\\
       &    &              &        \\
       &v_1 &              &R_{5,3}=[w_3,w_1]   \\
       &v_1+w_2 &          &R_{1,7}=[1,w_8]   \\
       &v_1-w_2 &          &R_{6,2}=[w_6,x]   \\[.5ex]
\end{array}
\]
\caption{The Pl\"ucker coordinates of the Ree curve.}\label{table:R}
\end{table}

\end{remark}

\begin{remark}
Using Table \ref{table:R}, we can write the quadratic equations (Set 4) in the form $f_{R_{ab}}f_{R_{cd}}+f_{R_{ad}}f_{R_{bc}}+f_{R_{ac}}f_{R_{db}}=0$, where $f_{R_{ab}}$ is the function such that $f_{R_{ab}}^{3q_0} \sim R_{ab}$ in Table \ref{table:R}.
\end{remark}

Now we give a visual (geometric) way to list the equations above, which are in Sets 1--4 of the Ree curve. Consider the complete graph with seven vertices labeled by $x_{-3},x_{-2},\dots,x_{3}$, where the edge between any two vertices $x_i$ and $x_j$ is labeled by the function that corresponds to the Pl\"ucker coordinate of $x_i$ and $x_j$ in Table \ref{table:R}. For example, the edge between $x_0=w_2$ and $x_3=-w_3$ is labeled by the function that corresponds to the Pl\"ucker coordinate $R_{4,5}$ which is $-y_{3}=w_5$. \\

To list all the equations of the Ree curve, we will use the graph in Figure~\ref{fig:G7}. Note that the edges from $x_1,x_2,x_3$ to $x_{0},x_{-1},x_{-2},x_{-3}$ are outgoing edges, the edges from $x_0$ to $x_{-1},x_{-2},x_{-3}$ are outgoing edges, the edges between $x_1,x_2,x_3$ are according to the permutation $(1,3,2)$, and the edges between $x_{-1},x_{-2},x_{-3}$ are according to the permutation $(1,2,3)$. As a convention, the clockwise orientation will be considered as the positive orientation of this graph. \\

The labeling of the edges in Figure \ref{fig:G7} is given by Table \ref{table:4.3}. The labeling matches the Pl\"ucker coordinates for the matrix $R$ given in Table \ref{table:R}. \\

\begin{table}[htb!]
\begin{center}
\begin{tabular}{c c ccc ccc ccc }
\toprule
\renewcommand{\arraystretch}{2}
 &$~~~~$ & $1$ & $-w_{6}$ & $-w_{3}$ &$~~$ & $w_{2}$  &$~~$ & $-w_{8}$ & $x$ & $w_{1}$ \\[.5ex]
\midrule
$1$            & &           & $-w_{4}$ & $-y_2$ & & $y_{1}$ & & $-w_7$ & $1$ & $x$ \\[.5ex] 
$-w_{6}$   & & $w_{4}$ &  & $w_{10}$ & & $w_{9}$ & & $-w_{8}$ & $-w_7-w_2$ & $-w_{3}$  \\[.5ex] 
$-w_{3}$   & & $-y_{2}$ & $-w_{10}$ &  & & $-w_{5}$ & & $-w_6$ & $w_{1}$ & $-w_{7}+w_2$ \\[3mm]
$w_{2}$    & & $-y_{1}$  & $-w_{9}$ & $w_{5}$ & &  & & $-w_{10}$ & $-y_{2}$ & $w_{4}$ \\[3mm]
$-w_{8}$   & & $w_{7}$  & $w_{8}$ & $w_{6}$ & & $w_{10}$ & &  & $-w_{5}$ & $-w_9$ \\[.5ex]
$x$            & & $-1$  & $w_7+w_2$ & $-w_1$ & & $y_{2}$ & & $w_{5}$ &  & $y_{1}$ \\[.5ex]
$w_{1}$    & & $-x$        & $w_{3}$ & $w_7-w_2$ & & $-w_{4}$ & & $w_{9}$ & $-y_{1}$ & \\[.5ex]
\bottomrule \\
\end{tabular}
\end{center}
\caption{The edge labeling of the graph in Figure \ref{fig:G7}. }
\label{table:4.3}
\end{table}
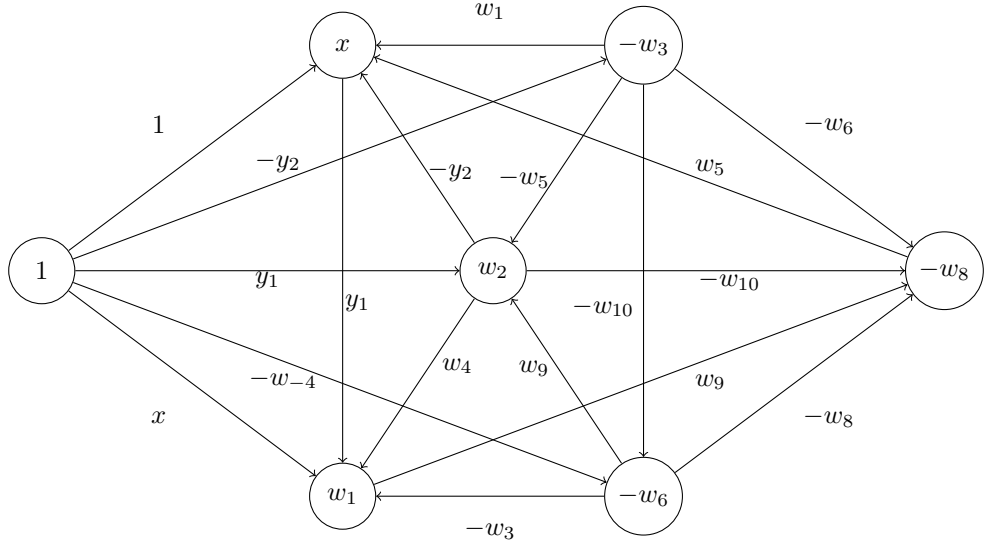
\begin{figure}[htb!]
\centering
\begin{tikzpicture}
  [scale=1,auto=left, every node/.style={minimum size=2.5em}]
  \node [circle,draw] (xp1) at (1,4) {$1$};
  \node [circle,draw] (xm3) at (5,1)  {$w_{1}$};
  \node [circle,draw] (xm2) at (5,7)  {$x$};
  \node [circle,draw] (x0) at (7,4) {$w_{2}$};
  \node [circle,draw] (xp2) at (9,1)  {$-w_{6}$};
  \node [circle,draw] (xp3) at (9,7)  {$-w_{3}$};
  \node [circle,draw] (xm1) at (13,4)  {$-w_{8}$};

  \draw [->] (xp1) -- node {$1$} (xm2);
  \draw [<-] (xm2) -- node {$w_1$} (xp3);
  \draw [->] (xp3) -- node {$-w_{6}$} (xm1);
  \draw [<-] (xm1) -- node {$-w_{8}$} (xp2);
  \draw [->] (xp2) -- node {$-w_{3}$} (xm3);
  \draw [<-] (xm3) -- node {$x$} (xp1);

  \draw [->] (x0) -- node[xshift=8.8mm,yshift=2.8mm] {$-y_{2}$} (xm2);
  \draw [->] (x0) -- node[xshift=.8mm,yshift=6.8mm] {$w_{4}$} (xm3);
  \draw [->] (x0) -- node[xshift=1.8mm,yshift=-5.8mm] {$-w_{10}$} (xm1);

  \draw [<-] (x0) -- node[xshift=-8.8mm,yshift=-2.8mm] {$w_{9}$} (xp2);
  \draw [<-] (x0) -- node[xshift=-1mm,yshift=-6.8mm] {$-w_{5}$} (xp3);
  \draw [<-] (x0) -- node[xshift=0mm,yshift=3.2mm] {$y_{1}$} (xp1);

  \draw [->] (xp1) -- node[xshift=-3.8mm,yshift=-4.8mm] {$-y_{2}$} (xp3);
  \draw [<-] (xp2) -- node[xshift=-1.8mm,yshift=4.8mm] {$-w_{-4}$} (xp1);
  \draw [<-] (xp2) -- node[xshift=0mm,yshift=-4.8mm] {$-w_{10}$} (xp3);

  \draw [<-] (xm1) -- node[xshift=4.8mm,yshift=4.8mm] {$w_{9}$} (xm3);
  \draw [<-] (xm2) -- node[xshift=4.8mm,yshift=-5.8mm] {$w_{5}$} (xm1);
  \draw [->] (xm2) -- node[xshift=-2.5mm,yshift=-4.3mm] {$y_{1}$} (xm3);

\end{tikzpicture}
\caption{The complete graph with the seven vertices.}\label{fig:G7}
\end{figure}
\quad\\
\begin{figure}[htb!]
\centering

\begin{tikzpicture}
  [scale=1,auto=left,minimum size=2.8em]

  \node [circle,draw] (xm3) at (1,1)  {$w_1$};
  \node [circle,draw] (xp3) at (3,7)  {$-w_{3}$};

  \node [circle,draw] (xp1) at (4,3.5) {$1$};
  \node [circle,draw] (xm1) at (8,3.5)  {$-w_{8}$};

  \node [circle,draw] (xm2) at (9,7)  {$x$};
  \node [circle,draw] (xp2) at (11,1)  {$-w_{6}$};


  \draw [->] (xp1) -- node {$-w_7$}  (xm1);
  \draw [<-] (xm3) -- node {$-w_7+w_2$} (xp3);
  \draw [->] (xp2) -- node[swap] {$-w_7-w_2$} (xm2);

\end{tikzpicture}
\caption{The diagonals in the complete graph with seven vertices in Figure~\ref{fig:G7}.}\label{fig2:G7}
\end{figure}
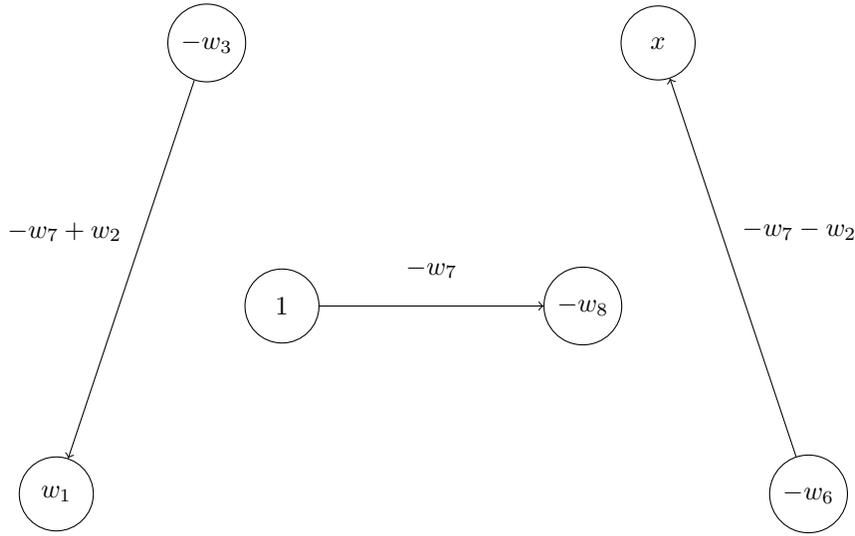

Now we use the graph in Figure \ref{fig:G7} to list the equations in Sets 1--4 as follows. To get the quadratic equations in Set 4, we consider the polygon in Figure \ref{figure:44}. Then, the product of the two diagonals plus the sum of the product of the opposite edges is equal to $0$, i.e.,
\[ad+be+cf=0.\] 

\begin{figure}[htb!]
\centering
\begin{tikzpicture}
  [scale=1,auto=left,]
  \node [circle,draw] (C) at (1,1) {$C$};
  \node [circle,draw, above=1cm of C] (A)   {$A$};
  \node [circle,draw, right=1cm of C] (D)   {$D$};
  \node [circle,draw, right=1cm of A] (B)   {$B$};

  \draw [->] (A) -- node{$a$} (B);
  \draw [<-] (A) -- node[swap]{$e$} (C);
  \draw [->] (A) -- node[xshift=2mm]{$c$} (D);
  \draw [->] (B) -- node[swap,xshift=-1.8mm,yshift=-1.8mm]{$f$} (C);
  \draw [->] (B) -- node{$b$} (D);
  \draw [->] (C) -- node[swap]{$d$} (D);
\end{tikzpicture}
\caption{Polygon with four vertices.}\label{figure:44}
\end{figure}

Now since we have seven vertices in the graph, in total we have $\binom{7}{4}=35$ polygons of four vertices, i.e., we have 35 equations. Among them, we notice that the equation
\[
 y_1w_{10}+y_2w_9+w_4w_5=0
\]
can be found from the two polygons in Figure \ref{figure1:45}:
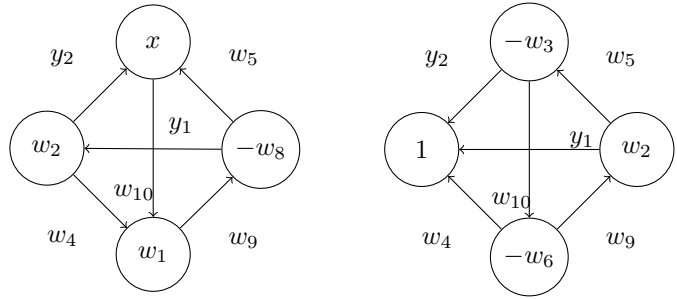
\begin{figure}[htb!]
\centering
\begin{tikzpicture}
  [scale=1,auto=left,minimum size=2.8em]
  \node [circle,draw] (x) at (5,5) {$x$};
  \node [circle,draw, below left=1cm of x] (w2)   {$w_2$};
  \node [circle,draw, below right=1cm of x] (w8)   {$-w_8$};
  \node [circle,draw, below right=1cm of w2] (w1)   {$w_1$};

  \draw [<-](x) -- node[swap]{$y_2$} (w2);
  \draw [<-](x) -- node{$w_5$} (w8);
  \draw [->](x) -- node[xshift=-1.3mm,yshift=2.9mm]{$y_1$} (w1);
  \draw [<-](w2) -- node[swap,xshift=-2.5mm,yshift=-0.8mm]{$w_{10}$} (w8);
  \draw [->](w2) -- node[swap]{$w_4$} (w1);
  \draw [<-](w8) -- node{$w_9$} (w1);

  [scale=1,auto=left,]
  \node [circle,draw] (x1) at (10,5) {$-w_3$};
  \node [circle,draw, below left=1cm of x1] (w22)   {$1$};
  \node [circle,draw, below right=1cm of x1] (w88)   {$w_2$};
  \node [circle,draw, below right=1cm of w22] (w11)   {$-w_6$};

  \draw [->](x1) -- node[swap]{$y_2$} (w22);
  \draw [<-](x1) -- node{$w_5$} (w88);
  \draw [->](x1) -- node[xshift=2mm,yshift=1.3mm]{$y_1$} (w11);
  \draw [<-](w22) -- node[swap,xshift=-2.3mm,yshift=-1.8mm]{$w_{10}$} (w88);
  \draw [<-](w22) -- node[swap]{$w_4$} (w11);
  \draw [<-](w88) -- node{$w_9$} (w11);
\end{tikzpicture}
\caption{The polygons yield the same quadratic equation.}\label{figure1:45}
\end{figure}

Hence, it appears twice in the list and so we have only 34 equations in Set 4. 

Next, to get the equations in Set 1 of degree $q_0+1$, we consider any triangle in the graph, we take every vertex in the triangle to the power $q_0$ and we multiply it with the opposite edge. Then, we add them all to get the equation of total degree $q_0+1$.

\begin{figure}[htb!]
\centering
\begin{tikzpicture}
  [scale=1,auto=left,]
  \node [circle,draw] (a) at (1,3) {$A$};
  \node [circle,draw] (b) at (3,4)  {$B$};
  \node [circle,draw] (c) at (3,2)  {$C$};

  \draw[->] (a) -- node {$c$} (b);
  \draw[<-] (a) -- node[swap]{$b$} (c);
  \draw[->] (b) -- node {$a$} (c);

\end{tikzpicture}
\caption{Triangle.}\label{figure2:45}
\end{figure}

For example, for the triangle on in Figure \ref{figure2:45}, we get $aA^{q_0}+bB^{q_0}+cC^{q_0}=0$. Note that all the arrows are in the positive orientation. If an arrow is in the negative orientation, then we multiply the edge by a negative sign.

Since we have $\binom{7}{3}=35$ triangles in the graph of seven vertices, in total we have 35 equations in Sets 1 and these will also give another 35 equations in Set 2 by $a^{3q_0}A+b^{3q_0}B+c^{3q_0}C=0$.

Moreover, we have the equation $1\cdot w_8 + x \cdot w_6 + w_1 \cdot w_3 - w^2_2=0$ which can be read from the long diagonals of the graph using the vertices $x_{-3},x_{-2},x_{-1},x_{1},x_{2},x_{3}$. Thus, in total we have $35+35+34+1=105$ equations.

\begin{remark}
 The motivation to use the graph above with seven vertices is the two dimensional root system in Figure~\ref{table1:RootSystem} (c) by taking the six short roots $\alpha$, $\alpha+\beta$, $2\alpha+\beta$, $-\alpha$, $-(\alpha+\beta)$, $-(2\alpha+\beta)$, and the origin to be the $x_i$'s in our notation and we take the long roots and the origin to be the $y_i$'s in our notation. These short roots and the origin give a seven dimensional representation of the Ree group.
\end{remark}

We mention that the ideal generated by these 105 equations can be generated (up to some power of a hyperplane) only by the first 35 equations from Set 1 as in the following lemma.

\begin{lemma}
Let $I \subseteq \fgq[x,y_1,y_2,w_1,\dots,w_{10}]$ be the ideal generated by the first 35 equations from Set 1 of degree $q_0+1$. Then, the equations in Set 2, 3, and 4 can be deduced (up to some power of a hyperplane) from $I$. In other words, the equations in Set 1 of degree $q_0+1$ define a reducible variety with one component the Ree curve and the remaining components are intersection of hyperplanes.
\end{lemma}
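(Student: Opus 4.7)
The plan is to generalize the argument used at the end of Section \ref{sec1:3.2} for the Suzuki curve, where the single quadratic equation was recovered as a combination of the four Set~1 equations, each multiplied by a specific edge label, producing the quadratic relation multiplied by the $q_0$-power of a single vertex. For the Ree curve the graph in Figure~\ref{fig:G7} has seven vertices and $\binom{7}{4}=35$ sub-polygons on four vertices, and I intend to treat Sets~4, 3 and 2 in this order.

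First I would derive the 34 quadratic equations in Set~4. Fix any four vertices $\{x_{i_1},x_{i_2},x_{i_3},x_{i_4}\}$ and a distinguished vertex $v=x_{i_1}$ among them. The three triangles through $v$ in this sub-polygon give three degree-$(q_0+1)$ equations from the first 35 equations of Set~1. Multiplying each by the label of the edge opposite $v$ in its triangle and summing, the coefficient of $v^{q_0}$ aggregates to the Pl\"ucker-type quadratic for the sub-polygon, while the remaining three $x_{i_j}^{q_0}$ coefficients cancel because the signed sum of the three edge-label products equals zero, by the combinatorial/orientation structure of Table~\ref{table:4.3} (this is the direct analogue of the Suzuki cancellation at the end of Section~\ref{sec1:3.2}). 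Thus each quadratic in Set~4 belongs to $I:v^{q_0}$, so it lies in $I$ up to the hyperplane $v$. Running through the 35 sub-polygons gives all 34 distinct Set~4 quadratics, with one sub-polygon reproducing an equation already accounted for, as noted in Figure~\ref{figure1:45}.

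Next, the single equation in Set~3 is handled by the same device applied to the central vertex $x_0=w_2$ together with the three long diagonals of Figure~\ref{fig2:G7}: the three Set~1 equations supported on triangles through $x_0$ and the diagonal-pairs, multiplied by appropriate labels and summed, produce exactly the identity $-w_2^2+w_8+xw_6+w_1w_3=0$ times $w_2^{q_0}$. Finally, for the 35 degree-$(3q_0+1)$ equations of Set~2 I would argue by Pl\"ucker inheritance: having shown that Sets~3 and~4 are consequences of Set~1 up to powers of coordinate hyperplanes, the variety cut out by Set~1 (after saturating by the product of those hyperplanes) lies on the Pl\"ucker quadric of the matrix $R$ via Table~\ref{table:R}, and each Set~2 identity is precisely the Pl\"ucker determinantal relation among three columns of $R$; these relations are then forced, again up to a hyperplane factor.

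The main obstacle is the bookkeeping in the first step: one must verify, uniformly across the 35 sub-polygons, that the orientation-induced signs and the choice of edge labels in Table~\ref{table:4.3} produce exactly the quadratic Pl\"ucker relation with only the $v^{q_0}$-terms surviving. This is essentially one combinatorial identity repeated under the symmetry induced by the $x_i\leftrightarrow x_{-i}$ involution $\phi$ of Remark~\ref{sec1:ZeroPole} and the three-fold symmetry among $\{x_1,x_2,x_3\}$ and $\{x_{-1},x_{-2},x_{-3}\}$; once it is established for one representative polygon in each symmetry class, the rest follow by re-indexing, and the lemma is complete.
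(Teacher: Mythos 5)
Your first step --- recovering each Set~4 quadratic by taking the three triangle equations through a fixed vertex $v$ of a four-vertex sub-polygon, multiplying each by the label of the edge missing from that triangle, and summing to obtain the quadratic times $v^{q_0}$ --- is exactly the paper's argument (the cancellation there is not a single vanishing signed sum but the fact that each of the other three terms $x_{i_j}^{q_0}$ occurs in exactly two of the three weighted equations with opposite contributions; this is a harmless imprecision). The problem is your treatment of Set~2.

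The ``Pl\"ucker inheritance'' step has a genuine gap. Set~1 raised to the power $3q_0$ gives, for each triangle $\{A,B,C\}$, that the vector $(a^{3q_0},b^{3q_0},c^{3q_0})$ annihilates $(A^{q},B^{q},C^{q})$, and the Set~4 quadrics say that the tuple of functions $f_{R_{ab}}$ satisfies the Pl\"ucker relations of $G(2,7)$, i.e.\ that $(f_{R_{ab}}^{3q_0})$ is the Pl\"ucker vector of \emph{some} line $\ell\subset\mathbb{P}^{6}$ passing through the Frobenius point $(1{:}x^{q}{:}\cdots{:}w_8^{q})$. The Set~2 identities $a^{3q_0}A+b^{3q_0}B+c^{3q_0}C=0$ assert in addition that the point $(1{:}x{:}\cdots{:}w_8)$ itself lies on $\ell$. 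That is not forced by Sets~1, 3, 4: the identification of Table~\ref{table:R}, namely that each $f^{3q_0}$ is proportional (with a \emph{common} proportionality factor) to the corresponding $2\times 2$ minor of the matrix $R$, is a property of the Ree curve, not a formal consequence of the relations you have at that stage. So ``these relations are then forced'' is precisely the assertion to be proved, and your argument is circular there.

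The paper closes this gap differently: from the Set~1 equations together with the already-derived quadratics (e.g.\ \eqref{app:eq3}, \eqref{app:eq4}, \eqref{app:eq8}, \eqref{app:Q6}, \eqref{app:Q12}, \eqref{app:Q14}) it derives the explicit Pedersen expressions $w_1=x^{3q_0+1}-y_1^{3q_0}$ and, analogously, all of $w_2,\dots,w_{10}$ as polynomials in $x,y_1,y_2$, and then the Artin--Schreier equations \eqref{eq:Ree1}--\eqref{eq:Ree2} as in Lemma~\ref{sec3:lemma105}. This identifies the relevant component of the Set~1 variety with the Ree curve, on which the Set~2 equations are known to hold; only then do they follow up to a power of a hyperplane. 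To repair your proof you would need to supply this identification (or an equivalent argument pinning $\ell$ down as the line through the point and its Frobenius image) rather than appeal to the Pl\"ucker structure alone.
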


\begin{proof}

First we show that the quadratic equations in Set 3 and Set 4 can be deduced up to a $q_0$ power of a hyperplane from the equations in Set 1. Consider the quadratic equation $ef+ac+bd=0$ in Set 4, which can be given using the polygon in Figure \ref{figure1:46}.
\begin{figure}[htb!]
\centering
\begin{tikzpicture}
  [scale=1,auto=left,]
  \node [circle,draw] (C) at (1,1) {$C$};
  \node [circle,draw, above=1cm of C] (A)   {$A$};
  \node [circle,draw, right=1cm of C] (D)   {$D$};
  \node [circle,draw, right=1cm of A] (B)   {$B$};

  \draw [->] (A) -- node{$a$} (B);
  \draw [<-] (A) -- node[swap]{$d$} (C);
  \draw [->] (A) -- node[xshift=2mm]{$e$} (D);
  \draw [->] (B) -- node[swap,xshift=-1.8mm,yshift=-1.8mm]{$f$} (C);
  \draw [->] (B) -- node{$b$} (D);
  \draw [->] (C) -- node[swap]{$c$} (D);
\end{tikzpicture}
\caption{Polygon with four vertices.}\label{figure1:46}
\end{figure}
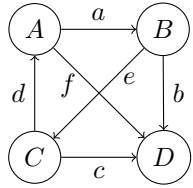

Now we fix the vertex $A$ and we consider the three triangles that contain the vertex $A$. Then, as explained above, we get the following three equations:
\begin{align*}
bA^{q_0}-fB^{q_0}+aD^{q_0}=0,\\
-cA^{q_0}+fC^{q_0}+dD^{q_0}=0,\\
eA^{q_0}+dB^{q_0}+aC^{q_0}=0.
\end{align*}
Next, we multiply the first equation by $d$ (note that $d$ is the label of the missing edge in the triangle $\Delta ABD$ inside the polygon), the second equation by $-a$, and the third by $f$ and we add them to get $A^{q_0}(ac+bd+ef)=0$.

Now we show that we can deduce the equations in Set 2 from the equations in Set 1. The strategy is as follows. We will show first that the equations in Set 1 define the Ree curve as it was described in \cite{Ped}, i.e., we will show that $w_1,\dots, w_{10}$ can be written in terms of $x,y_1,y_2$ as in \eqref{ReeEq:1}--\eqref{ReeEq:10}. Then, we will show that we get the two defining equations of the Ree curve \eqref{eq:Ree1} and \eqref{eq:Ree2}.

First we show $w_1=x^{3q_0+1} - y_1^{3q_0}$. Recall that we have the following equations in Sets 1 and 4.
\begin{align*}
&w_3^{q_0}+w_1-y_2x^{q_0}=0, \qquad \qquad \quad  &&\eqref{app:eq3}\\ 
&y_1x^{q_0}-w_2^{q_0}-y_2=0, \qquad \qquad \quad  &&\eqref{app:eq4}\\ 
&x^{q_0+1}-y_1-w_1^{q_0}=0,  \qquad \qquad \quad  &&\eqref{app:eq8}\\ 
&y_1^2-xy_2-w_4=0,          \qquad \qquad \quad   &&\eqref{app:Q6} \\
&xw_1=y_1y_2-v,             \qquad \qquad \quad   &&\eqref{app:Q12}\\ 
&y_1w_4=w_3+xw_2-xv.        \qquad \qquad \quad   &&\eqref{app:Q14}\\
\end{align*}
We multiply Equation \eqref{app:Q6} by $y_1$ to get
\begin{align*}
y_1^3&=xy_1y_2+y_1w_4\\
     &=xy_1y_2+w_3+xw_2-xv   &&\text{ by Equation } \eqref{app:Q14}\\
     &=x(y_1y_2-v)+w_3+xw_2\\
     &=x^2w_1+xw_2+w_3       &&\text{ by Equation } \eqref{app:Q12}\\
\end{align*}
Therefore, \begin{equation} \label{sec3:lemma**}
y_1^{3q_0}=x^{2q_0}w_1^{q_0}+x^{q_0}w_2^{q_0}+w_3^{q_0}
\end{equation}
Now we multiply Equation \eqref{app:eq8} by $x^{2q_0}$ to get
\begin{align*}
x^{3q_0+1}-x^{2q_0}y_1 -x^{2q_0}w_1^{q_0}&=0\\
x^{3q_0+1}-x^{2q_0}y_1+x^{q_0}w_2^{q_0}+w_3^{q_0}-y_1^{3q_0}&=0  && \text{ by } \eqref{sec3:lemma**}\\
x^{3q_0+1}-x^{2q_0}y_1+x^{q_0}w_2^{q_0}+y_2x^{q_0}-w_1-y_1^{3q_0}&=0 && \text{ by } \eqref{app:eq3}\\
x^{3q_0+1}-x^{q_0}(x^{q_0}y_1-w_2^{q_0}-y_2)-w_1-y_1^{3q_0} &=0 \\
x^{3q_0+1}-w_1-y_1^{3q_0} & = 0.&& \text{ by } \eqref{app:eq4}
\end{align*}
Therefore, we have $w_1=x^{3q_0+1}-y_1^{3q_0}$. Similarly, we can write $w_2,\dots,w_{10}$ in terms of $x,y_1,y_2$ to get the Equations \eqref{ReeEq:1}--\eqref{ReeEq:10} as in \cite{Ped}. Next, we use these equations to show that $y_1^{q}-y_1=x^{q_0}(x^q-x)$ and $y_2^q-y_2=x^{q_0}(y_1^q-y_1)$, but this has been done in the proof of Lemma \ref{sec3:lemma105}. Therefore, we have shown that the variety that is defined by the equations in Set 1 actually contains the Ree curve and hence we can get the equations in Set 2 up to some power of a hyperplane.
\end{proof}

We conclude this subsection by making a correspondence Pedersen's notations and Dickson's notation of Section \ref{sect2.1} which is given in Table \ref{table:D.1}.
\begin{table}[htb!]
\begin{center}
\begin{tabular}{ccccccc}
\toprule
$\xi_1$ & $\xi_2$ & $\xi_3$ & $\xi_0$ & $\mu_1$ & $\mu_2$ & $\mu_2$   \\
\midrule
$1$ & $-w_6$ & $-w_3$ & $w_2$ & $-w_8$ & $x$ & $w_1$\\
\bottomrule
\end{tabular}
\end{center}
\caption{Dickson's notation.}
\label{table:D.1}
\end{table}

The other important correspondence is between Pedersen's notations and Tits' notations \ref{sect2.1} which is given in Table \ref{table:T.1} and \ref{table:T.2}

\begin{table}[htb!]
\begin{center}
\begin{tabular}{ccccccc}
\toprule
$x_0$ & $x_1$ & $x_2$ & $x_{*}$ & $x_{0'}$ & $x_{1'}$ & $x_{2'}$   \\
\midrule
$-w_3$ & $-w_6$ & $1$ & $w_2$ & $w_1$ & $x$ & $-w_8$\\
\bottomrule
\end{tabular}
\end{center}
\caption{The coordinates $x_i$'s used by Tits and the corresponding rational functions notations.}
\label{table:T.1}
\end{table}

Moreover, we have that the set of $y_i$'s defined above corresponds to the Pedersen notation as in Table \ref{table:T.2}.
\begin{table}[htb!]
\begin{center}
\begin{tabular}{ccccccc}
\toprule
$y_0$ & $y_1$ & $y_2$ & $y_{*}$ & $y_{0'}$ & $y_{1'}$ & $y_{2'}$   \\
\midrule
$-w_5$ & $-y_{1}$ & $y_2$ & $w_7$ & $w_4$ & $w_{10}$ & $-w_{9}$\\
\bottomrule
\end{tabular}
\end{center}
\caption{The coordinates $y_i$'s used by Tits and the corresponding rational functions notations.}
\label{table:T.2}
\end{table}

\subsection{The Ree Group}\label{sec1:2.1}
After we have discussed the defining equations of the Ree curve, we discuss the construction of the Ree group as it appears in \cite{Wil2},\cite{Wil1}. In this subsection we will recall the new elementary construction of the Ree group $R(q)=\,\GG(q)$ as it is given in \cite[Chapter 4]{Wil2},\cite{Wil1}. The advantage of this approach is that it will avoid the use of Lie algebra. Let $V$ be a 7-dimensional $\fgq$-vector space with basis $\{i_t\,|\,t\in \fg{7}\}$, where $q:=3q_0^2:=3^{2m+1}\, (m \in \NN)$. Consider the anti-commutative multiplication on $V$ defined by $\cdot : V \times V \ni (i_t,i_{t+r})\mapsto i_{{t+3r}} \in V$ ($r=1,2,4$). This multiplication will define the 8-dimensional Octonian algebra $\OO$ with basis $\{1,i_t\,|\, t \in \fg{7}\}$ over $\fgq$. In fact $\OO$ is a Lie algebra with the Lie bracket is defined by the multiplication above. We consider two important maps $m:\Wedge(V)\ni i_t \land i_{t+r} \mapsto i_{t+3r} \in V $ ($r=1,2,4$) with kernel $W:=\ker(m)$ of dimension 14 over $\fgq$ and $\mu:V \ni i_t \mapsto \sum_{r=1,2,4}i_{t+r}\land i_{t+3r} \in \Wedge(V)$ with image $V':=\im(\mu) \simeq V$ of dimension 7 over $\fgq$. Moreover, we have that $V'=\im(\mu)\subseteq \ker(m)=W$.

Consider the $\fgq$-basis $\{ i'_t,i^*_t \,|\, t \in \fg{7} \}$ of $W$ defined by:
\begin{align*}
i'_t:=\mu(i_t)&=\sum_{r=1,2,4}i_{t+r}\land i_{t+3r}\\
              &= i_{t+1}\land i_{t+3}+i_{t+2}\land i_{t+6}+i_{t+4}\land i_{t+5},\\
i^*_t:&=i_{t+1}\land i_{t+3}-i_{t+2}\land i_{t+6}.
\end{align*}


To define the Ree group, we will need the following two homomorphisms.
\begin{alignat*}{2}
\mu = \theta : &V \to W                 \qquad \qquad \text{and} \qquad  \qquad   \rho : &&W \to V\\
               &i_t\mapsto i'_t                                                          &&i^*_t \mapsto i_t\\
               &\quad                                                                    && i'_t \mapsto 0.
\end{alignat*}

Note that $\rho$ induces an isomorphism between $V^*:=W/V'\simeq V$. The group $G_2(q)$ is then defined as the subgroup of the orthogonal group $\text{GO}_7(\fgq)$ (with orthonormal basis $\{i_t \,|\, t \in \fg{7} \}$ over $\fgq$) consisting of those elements which commute with $\theta$. This leads to an easy definition of the Ree group $\GG$ as follows. Consider the twisted map $\rho^*:W \to V$ given by $\rho^*(\lambda^*i^*_t)=\lambda i_t$, where $\lambda^*:=\lambda^{3q_0}$ ($\lambda\in \fgq$, i.e., $\lambda^{**}=\lambda^3$). Then, the Ree group $\GG(q)$ is defined as the subgroup of $G_2(q)$ consisting of those elements which commute with $\rho^*$.

Using the construction above, Wilson \cite{Wil1} gave a description of the $q^3+1$ $\fgq$-rational points, described the generators of the Ree group, and showed that $\GG$ is a simple group. To see this, change the basis of $V$ from $\{i_t \,|\, t \in \fg{7}\}$ to $\{v_{\pm 1}, v_{\pm 2},v_{\pm 3}, v_0 \}$ which is defined as follows
\begin{align*}
v_{-3}:=-i_3-i_5-i_6, &\qquad \qquad v_3:=i_3-i_5+i_6,\\
v_{-2}:=-i_1-i_2-i_4, &\qquad \qquad v_2:=i_1+i_2-i_4,\\
v_{-1}:=-i_0-i_3+i_6, &\qquad \qquad v_1:=-i_0+i_3-i_6,\\
			  &v_0:=i_1-i_2.
\end{align*}
Then, $W$ has the new basis $\{v^*_i,v'_i \,|\, i \in \{0,\pm 1, \pm 2, \pm 3 \} \}$ which is given in Table \ref{table:basis of W}.

\begin{table}[htb!]
\begin{center}
\begin{tabular}{  c c c  }
\toprule
$r$ & $v'_r $& $v_r^*$ \\
\midrule
-3 & $v_0 \land v_{-3} + v_{-2}\land v_{-1} $& $v_{-3} \land v_{-2}$\\
-2 & $v_1 \land v_{-3} + v_{-2}\land v_{0} $& $v_{-1} \land v_{-3}$\\
-1 & $v_{-3} \land v_{2} + v_{-1}\land v_{0} $& $v_{-2} \land v_{1}$\\

0 & $v_{3} \land v_{-3} + v_{2}\land v_{-2} + v_{1}\land v_{-1}$ & $v_{-3} \land v_{3} + v_{-2}\land v_{2}$\\
1 & $v_3 \land v_{-2} + v_{0}\land v_{1} $& $v_{2} \land v_{-1}$\\
2 & $v_{-1} \land v_{3} + v_{0}\land v_{2} $& $v_{1} \land v_{3}$\\
3 & $v_{3} \land v_{0} + v_{2}\land v_{1} $& $v_{3} \land v_{2}$\\
\bottomrule
 \end{tabular}

\end{center}
\caption{The new basis of $W$.}
\label{table:basis of W}
\end{table}


\begin{table}[htb!]
\begin{center}
\begin{tabular}{  c  c  c  c  c  c  c  c   }
\toprule
 & $v_{-3}$ & $v_{-2}$ & $v_{-1}$ & $v_{0}$ & $v_{1}$ & $v_{2}$ & $v_{3}$\\
\midrule
$v_{-3}$ & 0 & 0 & 0 & $-v_{-3}$ & $v_{-2}$ & $-v_{-1}$ & $v_{0}$\\
$v_{-2}$ &  & 0 & $v_{-3}$ & $v_{-2}$ & $0$ & $v_{0}$ & $v_{1}$\\
$v_{-1}$ &  &  & 0 & $v_{-1}$ & $-v_0$ & $0$ & $-v_{2}$\\
$v_{0}$ &  &  &  & 0 & $v_1$ & $v_2$ & $-v_{3}$\\
$v_{1}$ &  &  &  &  & 0 & $v_3$ & $0$\\
$v_{2}$ &  &  &  &  &  & 0 & $0$\\
$v_{3}$ &  &  &  &  &  &  & $0$\\
\bottomrule
 \end{tabular}

\end{center}
\caption{The multiplication table of $V$ using the new basis.}
\end{table}

A vector $v \in V$ is called a \emph{$*$-vector} if $v^*\equiv v \land w \pmod{V'}$, for some $w\in V$. Similarly $\langle v \rangle$ is called a \emph{$*$-point} if $v$ itself is a $*$-vector. Wilson \cite{Wil1} described explicitly the set of all $*$-points which is called the \emph{Ree unital} as follows. For any $*$-point $v \in V$, we have either $v=v_{-3}$ or $v=v_3+\sum_{r=-3}^{2}\alpha_r v_r$ ($\alpha_r \in \fgq$). Then, given $\alpha_2,\alpha_1,\alpha_0$, one can solve a system of equations described in \cite[Section 3]{Wil1} to find $v, w\in V$ such that $v^*\equiv v \land w \pmod{V'}$. Therefore, we have $q^3+1$ $*$-points in the Ree unital. Moreover, if $\phi$ is an automorphism of $\GG$ that fixes the point $\langle v_{-3} \rangle$, then $\phi$ is uniquely determined by $\alpha_2,\alpha_1,\alpha_0 \in \fgq$ in $\phi(v_3)=v_3+\sum_{r=-3}^{2}\alpha_r v_r$, and the system of equations in \cite[Section 3]{Wil1} provides enough information to solve for every entry of $\phi$. Note that the diagonal automorphism $\delta(\lambda):=\text{diag}(\lambda,\lambda^{3q_0-1}, \lambda^{-3q_0+2}, 1, \lambda^{3q_0-2}, \lambda^{-3q_0+1}, \lambda^{-1})$ is another automorphism that also fixes the $*$-point $\langle v_{-3} \rangle$. The subgroup $B\subseteq$ $\GG$ generated by these automorphisms is the maximal subgroup that fixes $\langle v_{-3} \rangle$. Moreover, the Ree group $\GG$ is generated by the subgroup $B$ and an automorphism of order 2.


\section{Smooth Embedding for the Ree Curve}\label{sec1:5}
In this section we want to prove that the variety $\mathcal{X}\subseteq \ppfgqc$ defined by the 105 equations of Section \ref{sec1:4} gives a smooth model for the Ree curve in the projective space. We denote the Ree curve by $\xr$ or simply by $\mathcal{R}$. Similar to the case of the Hermitian and Suzuki curves \cite{FTor1},\cite{TGK},\cite{Tor3}, we begin first by finding a very ample linear series that defines a smooth embedding for the Ree curve. We give then a concrete realization of the embedding by showing that the smooth curve $\mathcal{X}$ is birationally equivalent to the Ree curve. We recall that the $L$-polynomial of the Ree curve is given by
\[
 L(t):=L_{\xr}(t)=(qt^2+3q_0t+1)^a(qt^2+1)^b,
\]
where $a:=q_0(q^2-1)$, $b:=\sfrac{q_0(q-1)(q+3q_0+1)}{2}$ with $a+b=2\gr$.

We will follow the outline in the lecture notes \cite{Tor3}. The reciprocal polynomial of the $L$-polynomial is given by
\begin{align*}
h_{\xr}(t)&=t^{2g}L(t^{-1})=t^{2g}(qt^{-2}+3q_0t^{-1}+1)^a(qt^{-2}+1)^b\\
     &= t^{2g}\cdot t^{-2a}(q+3q_0t+t^2)^a \cdot t^{-2b}(q+t^2)^b\\
     &= (q+3q_0t+t^2)^a(q+t^2)^b.
\end{align*}
The polynomial $h_{\xr}$ has two irreducible factors $h_1(t):=q+3q_0t+t^2$ and $h_2(t):=q+t^2$. Set $h(t):=h_1(t)h_2(t)=q^2+3q_0qt+2qt^2+3q_0t^3+t^4$. Let $\Phi:\xr \to \xr$ be the Frobenius morphism on $\xr$ and $\JJr:=Cl^0(\xr)$ be the Jacobian group of $\xr$. Then, $\Phi$ induces a well-defined morphism $\tilde{\Phi}:\JJr \to \JJr$ given by $\tilde{\Phi}([P])=[\Phi(P)]$. Moreover, $h_{\xr}$ is the characteristic polynomial of $\tilde{\Phi}$ over $\fgqc$ \cite[Page 44]{Tor3}. Note that $\tilde{\Phi}$ is semisimple \cite[Chapter IV, Corollary 3]{Mum},\cite[Page 251]{Mum},\cite[Theorem 2 (a)]{Tate}. Therefore, $h(\tilde{\Phi})=0$ \cite[Chapter IV, Theorem 3]{Mum}, i.e., we have
\begin{equation} \label{eq:char}
 q^2I+3q_0q \tilde{\Phi} +2q \tilde{\Phi}^2 +3q_0 \tilde{\Phi}^3 + \tilde{\Phi}^4=0 \quad \text{ in } \JJr.
\end{equation}

Let $P_{\infty}\in \xr(\fgq)$ be as before the $\fgq$-rational point at infinity and let $f:\xr \to \JJr$ be the morphism defined by $f(P):=[P-P_\infty]$. Then, we have the following commutative diagram
\begin{center}
\begin{tikzpicture}
  [scale=1,auto=left,]
  \node [] (x1) at (1,1) {$J_R$};
  \node [] (xm2) at (1,4)   {$X_R$};
  \node [] (x2) at (4,1)  {$J_R$};
  \node [] (xm1) at (4,4)  {$X_R$};

  \draw [->] (xm2) -- node[swap]{$f$} (x1);
  \draw [<-] (x2) -- node{$\tilde{\Phi}$} (x1);

  \draw [->] (xm2) -- node{$\Phi$} (xm1);
  \draw [<-] (x2) -- node[swap]{$f$} (xm1);

\end{tikzpicture}
\end{center}
i.e., $f \circ \Phi= \tilde{\Phi} \circ f$.

\begin{lemma} \label{lemma:1}
For $P \in \xr$, we have
\begin{equation} \label{eq:equiv}
 q^2P+3q_0q \Phi(P) +2q \Phi^2(P) +3q_0 \Phi^3(P) + \Phi^4(P) \sim mP_\infty,
\end{equation}
where $m:=h(1)=q^2+3q_0q+2q+3q_0+1=-\nu_{\infty}(w_8)$.
\end{lemma}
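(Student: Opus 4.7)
The plan is to apply the characteristic-polynomial identity $h(\tilde\Phi)=0$ in $\JJr$ (equation~\eqref{eq:char}) to the divisor class $[P-P_\infty]$, transported along the commutative square $f\circ\Phi=\tilde\Phi\circ f$.

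First I would set $D := q^2P+3q_0q\,\Phi(P)+2q\,\Phi^2(P)+3q_0\,\Phi^3(P)+\Phi^4(P)$ and note that
\[
\deg D \;=\; q^2+3q_0 q+2q+3q_0+1 \;=\; h(1) \;=\; m,
\]
so $D-mP_\infty$ is a degree-zero divisor and its class lies in $\JJr$. The equality $m=-\nu_\infty(w_8)$ is already established from Table~\ref{table:4.1} in Remark~\ref{sec1:ZeroPole}, so nothing further is needed there.

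Next, because $P_\infty\in\xr(\fgq)$ is fixed by $\Phi$, the composition $f\circ\Phi^i$ still sends $P$ to the class $[\Phi^i(P)-P_\infty]$, and by iterating the commutative diagram $f\circ\Phi=\tilde\Phi\circ f$ we get $f\circ\Phi^i=\tilde\Phi^i\circ f$ for every $i\ge 0$. Applying this termwise to $D-mP_\infty$ yields
\[
[D-mP_\infty] \;=\; \bigl(q^2 I+3q_0 q\,\tilde\Phi+2q\,\tilde\Phi^2+3q_0\,\tilde\Phi^3+\tilde\Phi^4\bigr)\bigl([P-P_\infty]\bigr) \;=\; h(\tilde\Phi)\bigl(f(P)\bigr)
\]
in $\JJr$. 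By \eqref{eq:char} the operator $h(\tilde\Phi)$ is identically zero on $\JJr$, so $[D-mP_\infty]=0$, which is exactly the required linear equivalence $D\sim mP_\infty$.

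There is essentially no serious obstacle: the argument is formal once one has \eqref{eq:char} and the commutativity $f\circ\Phi=\tilde\Phi\circ f$. The only points requiring a moment of care are the degree bookkeeping (which forces the constant $m=h(1)$ to appear so that $D-mP_\infty$ lands in $\JJr$) and the use of $\Phi(P_\infty)=P_\infty$, which is what lets us replace each $f(\Phi^i(P))$ by $\tilde\Phi^i(f(P))$ without an error term.
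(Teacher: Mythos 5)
Your argument is correct and is essentially identical to the paper's own proof: both apply the identity $h(\tilde\Phi)=0$ from \eqref{eq:char} to the class $f(P)=[P-P_\infty]$, use the compatibility $\tilde\Phi^i(f(P))=[\Phi^i(P)-\Phi^i(P_\infty)]$ coming from the commutative square, and then invoke $\Phi(P_\infty)=P_\infty$ to collapse the right-hand side to $mP_\infty$ with $m=h(1)$. The explicit degree bookkeeping you add is a harmless (and clarifying) remark, not a different route.
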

\begin{proof}
First we notice that for all natural numbers $i=0,1,2,\dots$ and for any $P \in \xr$, $\tilde{\Phi}^i(f(P))=\tilde{\Phi}^{i}([P-P_\infty])=[\Phi^i(P)-\Phi^i(P_\infty)]$. Now we apply Equation \eqref{eq:char} to $f(P)\in \JJr$ to get that
\begin{alignat*}{2}
&q^2f(P)+3q_0q \tilde{\Phi}(f(P))+2q \tilde{\Phi}^2(f(P)) +3q_0 \tilde{\Phi}^3(f(P)) + \tilde{\Phi}^4(f(P)) = 0 \text{ in } J_{\text{R}}\\
&q[P-P_\infty]+3q_0q[\Phi(P)-\Phi(P_\infty)]+2q[\Phi^2(P)-\Phi^2(P_\infty)]\\
&  \qquad \qquad \qquad \qquad \,\, +3q_0[\Phi^3(P)-\Phi^3(P_\infty)]+[\Phi^4(P)-\Phi^4(P_\infty)] =0 \text{ in } J_{\text{R}}\\
&\left [ q^2P+3q_0q\Phi(P)+2q\Phi^2(P)+3q_0\Phi^3(P)+\Phi^4(P) \right ]=\\
&   \qquad  \qquad \qquad \left [ q^2P_\infty+3q_0q\Phi(P_\infty)+2q\Phi^2(P_\infty)+3q_0\Phi^3(P_\infty)+\Phi^4(P_\infty) \right ].
\end{alignat*}
Since $P_{\infty}\in \xr(\fgq)$, we get $\Phi(P_\infty)=P_\infty$. Therefore,
\[
 [ q^2P+3q_0q \Phi(P) +2q \Phi^2(P) +3q_0 \Phi^3(P) + \Phi^4(P)]=[(q^2+3q_0q+2q+3q_0+1)P_{\infty}]
\]
and we get the required equivalence
\[
  q^2P+3q_0q \Phi(P) +2q \Phi^2(P) +3q_0 \Phi^3(P) + \Phi^4(P) \sim mP_\infty.\qedhere
\]
\end{proof}

Fix $m:=h(1)=q^2+3q_0q+2q+3q_0+1$, $H:=mP_{\infty}$, $\Da:=\Da_{\text{R}}:=\left | (q^2+3q_0q+2q+3q_0+1)P_\infty \right | $,  $\Da':=\langle 1,x,y_1,y_2,w_1,\dots,w_{10} \rangle \subseteq \LL(H)$, and $\Da_1:=\mathbb{P}(\Da') \subseteq |mP_\infty|$. Note that we will show in Section \ref{sec1:8.1} that $\Da_1=\Da$ over $\fg{27}$. 

\begin{lemma}\label{lemma:2}
 With the notations above, we have the following:
\begin{enumerate}
 \item[(1)] $\Da$ is independent of the choice $P_\infty \in \xr(\fgq)$.

 \item[(2)] $m=q^2+3q_0q+2q+3q_0+1 \in \text{H}(Q)$, for all $Q\in \xr(\fgq)$, where $H(Q)$ is the Weierstrass non-gaps semigroup at $Q$.
 \item[(3)] $\Da$, $\Da_1$ are base-point-free and simple linear series.
 \item[(4)] $q^2$ is the first positive non-gap at every $Q\in \xr(\fgq)$.
\end{enumerate}
\end{lemma}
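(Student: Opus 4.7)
The proof rests on Lemma~\ref{lemma:1}. For any $Q \in \xr(\fgq)$ the Frobenius morphism $\Phi$ fixes $Q$, so Equation~\eqref{eq:equiv} applied at $P = Q$ collapses to the linear equivalence $mQ \sim mP_\infty$ in $\JJr$. Parts (1) and (2) follow at once: the complete linear systems $|mQ|$ and $|mP_\infty|$ coincide, proving (1); and any $g \in \fr$ with $\divv(g) = mP_\infty - mQ$ has pole divisor exactly $mQ$, showing $m \in H(Q)$, which is (2).

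For part (3), base-point-freeness of $\Da_1$ (and hence of $\Da \supseteq \Da_1$) is verified pointwise. Given $R \in \xr$, if $R \neq P_\infty$ then the constant $1 \in \Da'$ corresponds to the divisor $mP_\infty \in \Da_1$, whose support avoids $R$. If $R = P_\infty$, pick $R' \in \xr(\fgq) \setminus \{P_\infty\}$ (possible since $|\xr(\fgq)| = q^3 + 1 \geq 2$); by part (1), $mR' \in |mR'| = \Da$ and its support avoids $P_\infty$. For $\Da_1$ itself, the function $x \in \Da'$ has a pole of order $q^2 < m$ at $P_\infty$, so $\divv(x) + mP_\infty \in \Da_1$ omits $P_\infty$. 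For simpleness: $\Da'$ contains $x, y_1, y_2$, which generate the function field $\fr = \fgq(x, y_1, y_2)$, so the morphism $\phi_{\Da_1}$ is birational onto its image. Since $\Da_1 \subseteq \Da$, the morphism $\phi_\Da$ refines $\phi_{\Da_1}$ via a linear projection, hence $\phi_\Da$ is birational as well.

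For part (4), by the 2-transitive action of $\GG(q)$ on $\xr(\fgq)$ (Cohen's construction, Section~\ref{sect2.1}), the Weierstrass semigroup $H(Q)$ is independent of $Q \in \xr(\fgq)$, so it suffices to treat $Q = P_\infty$. Since $P_\infty$ lies above $Q_\infty \in \ppfx$ with ramification index $q^2$, we have $\nu_\infty(x) = -q^2$, so $q^2 \in H(P_\infty)$. The main obstacle is the lower bound, namely that no $1 \leq n < q^2$ belongs to $H(P_\infty)$. Such an $n$ would produce $f \in \fr$ with $(f)_\infty = nP_\infty$, yielding a rational subfield $\fgq(f) \subseteq \fr$ with $[\fr:\fgq(f)] = n < q^2$ and hence a morphism $\xr \to \mathbb{P}^1$ of degree strictly less than $q^2$. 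I would rule this out by appealing to the fact that the gonality of the Ree curve equals $q^2$, as follows from the explicit Weierstrass data at $P_\infty$ computed in Pedersen~\cite{Ped}.
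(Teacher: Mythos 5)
Your parts (1) and (2) follow the paper's argument, and your proof of simpleness via the observation that $x,y_1,y_2\in\Da'$ already generate $\fr=\fgq(x,y_1,y_2)$ is a clean shortcut (the paper instead notes that $\deg\pi$ divides both pole orders $m$ and $m-1$ of $w_8$ and $w_6$). However, there are two genuine problems. First, in part (3) your removal of the base point at $P_\infty$ for $\Da_1$ fails: the divisor $\divv(x)+mP_\infty$ has valuation $-q^2+m=3q_0q+2q+3q_0+1>0$ at $P_\infty$, so $P_\infty$ is still in its support. A function of pole order \emph{strictly less} than $m$ cannot remove this base point; you need a function in $\Da'$ whose pole order at $P_\infty$ is exactly $m$, namely $w_8$, which gives $\nu_{P_\infty}(mP_\infty+(w_8))=m-m=0$ as in the paper. (A related remark applies to part (2): a function $g$ with $\divv(g)=mP_\infty-mQ$ shows $m\in H(Q)$ only when $Q\ne P_\infty$; the case $Q=P_\infty$ again needs $(w_8)_\infty=mP_\infty$, or the same equivalence run from another rational point.)

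Second, and more seriously, part (4) is not proved. You correctly get $q^2\in H(P_\infty)$ from $\nu_\infty(x)=-q^2$, but the lower bound --- that no $n$ with $1\le n<q^2$ is a non-gap --- is precisely the content you defer to ``the gonality of the Ree curve equals $q^2$, as follows from the explicit Weierstrass data at $P_\infty$ computed in Pedersen.'' This is circular: Pedersen explicitly left the Weierstrass semigroup at $P_\infty$ as an open problem (the problem this paper addresses in Section~\ref{sec1:8}), so there is no such data to appeal to, and in any case knowledge of the semigroup at one point does not by itself give the gonality. The paper closes the gap with a counting argument: a function $f$ with $(f)_\infty=n_1(Q)\,Q$ exhibits $\fr$ as a degree-$n_1(Q)$ extension of $\fgq(f)$ in which one rational place is totally ramified, so by Lewittes' bound $\#\xr(\fgq)\le 1+q\,n_1(Q)$; since $\#\xr(\fgq)=q^3+1$ this forces $n_1(Q)\ge q^2$. (For the matching upper bound at an arbitrary rational point $Q$, the paper subtracts $\Phi$ applied to \eqref{eq:equiv} from \eqref{eq:equiv} at a generic point $P$ to produce a positive divisor equivalent to $q^2P$ avoiding $P$, then uses semicontinuity of the first non-gap; your route via $x$ and the transitivity of $\GG(q)$ on $\xr(\fgq)$ would also serve.)
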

\begin{proof}
 (1) Let $Q\in \xr(\fgq)$ be another $\fgq$-rational point. Then, applying \eqref{eq:equiv} to $Q$, we get that $mQ \sim mP_{\infty}$ since $\Phi(Q)=Q$. Therefore, we have $\Da=\left | mP_{\infty}\right |=\left | mQ \right | $.


(2) Since $(w_8)_\infty=mP_{\infty}$, $m$ is a non-gap at $P_\infty$. By (1) we have $\left | mP_\infty\right | =\left | mQ \right | $ for any $\fgq$-rational point $Q\in \xr(\fgq)$. Therefore, there exists a positive divisor $A\in \left | mQ \right | $ such that $A=mQ+(z)=mP_\infty+(z')$, $z,z' \in \LL(H)$. Hence, $(z'z^{-1})_\infty=mQ$
 and so $m$ is a non-gap integer at $Q$.

(3) Recall that $\Da_1$ is a base-point-free if $b(P)=0$, for all points $P\in \xr$. Set $D_\infty:=mP_\infty+(w_8)\in \Da_1$. Therefore, we have $\nu_\infty(D_\infty)=m-m=0$, in particular, $b(P_\infty)=0$ and that shows $P_\infty$ is not a base point for $\Da_1$. For any other point $Q\ne P_\infty$, set $D_Q:=mP_{\infty}+(1) \in \Da_1$. Then, we have $\nu_Q(D_Q)=0$, in particular, $b(Q)=0$ and that shows $Q$ is not a base point for $\Da_1$. Therefore, $\Da$ is a base-point-free linear series and hence $\Da$ is also a base-point-free linear series.

To show $\Da_1$ is simple, we consider any morphism $\phi$ associated with $\Da_1$. We need to show that $\phi$ is birational, i.e., $\deg(\phi):=\left [\fgqc(\xr):\fgqc(\phi(\xr))\right ]=1$. Recall from Proposition 3.6.1 (c) in \cite{Sti} that $\deg(\phi)=\left [\fgqc(\xr):\fgqc(\phi(\xr))\right ]=\left [\fgq(\xr):\fgq(\phi(\xr))\right ]$.

 Now we consider first the morphism $\pi:=(t:x:y_1:y_2:w_1:\cdots:w_{10})$. Then, we have that $\deg((w_8)_\infty)=m=[\fgq(\xr):\fgq(w_8)]$ is divisible by $\deg(\pi)$, similarly $\deg((w_6)_\infty)=\deg((m-1)P_\infty)=m-1=[\fgq(\xr):\fgq(w_6)]$ is divisible by $\deg(\pi)$. Hence, we must have that $\deg(\pi)=1$. Now for any other morphism $\phi$ associated with $\Da_1$, there exists $\tau \in \Auti(\ppfgqc)$ such that $\phi=\tau \circ \pi$. Therefore, $\deg(\phi)=\deg(\pi)=1$. Therefore, $\Da_1$ is a simple linear series. Same argument also shows that $\Da$ is a simple linear series.

(4) Let $Q \in \xr(\fgq)$ be a rational point and let $n_1(Q)$ be the first non-gap integer at $Q$. We want to show $n_1(Q)=q^2$. Choose $P \in \xr$ such that $\Phi^i(P)\ne P$ ($i=0,1,2,3,4,5$) and $P$ is a non Weierstrass point according to the definition in \cite[p. 28]{Tor3}. Apply $\Phi$ to the equivalence \eqref{eq:equiv} and then subtract the result from \eqref{eq:equiv} we get
\[
 \Phi^5(P)+(3q_0-1)\Phi^4(P)+(2q-3q_0)\Phi^3(P)+(3q_0q-2q)\Phi^2(P) +(q^2-3q_0q)\Phi(P)\sim q^2 P.
\]
 Since the left-hand side of the equivalence above is a positive divisor and $P$ is not in its support, we have that the first non-gap at $P$ is less than or equal to $q^2$. By \cite[Lemma 2.30]{Tor3}, we have $n_1(Q) \leq n_1(P) \leq q^2$. Let $f \in \fr$ be the $\fgq$-rational function such that $(f)_\infty=n_1(Q)\cdot Q$. Then, $\fr \fieldextension \fgq(f)$ is of degree $n_1(Q)$. Now each $\fgq$-rational place of $\fgq(f)$ splits into at most $n_1(Q)$ rational places of $\fr$ with one specific place that will be totally ramified in $\fr \fieldextension \fgq(f)$ \cite[Theorem 1(b)]{Lewittes}. But then we have the bound $\#\xr(\fgq)=q^3+1\leq 1+qn_1(Q)$, but $n_1(Q)\leq q^2$, so $q^3+1\leq 1+qn_1(Q) \leq 1+q^3$. Therefore, $n_1(Q)=q^2$.
\end{proof}

\begin{remark} \label{remark:4}
\begin{enumerate}
 \item[(1)] For the point $P_\infty$ we have that $j_{N-i}(P_\infty)=m-n_{i}(P_\infty)$ $(i=0,1,2,\dots,N:=\ell(mP_\infty))$. Therefore, using Table \ref{table:4.1} we can determine some of the $(\Da,P_\infty)$-orders.
 \item[(2)] Since we have $h(t)=q^2+3q_0qt+2qt^2+3q_0t^3+t^4$, we get that $1,3q_0,2q,3q_0q,q^2$ are $\Da$-orders \cite[Corollary 4.22(1)]{Tor3}. Moreover, using Lemma \ref{lemma:2} and \cite[Corollary 4.22]{Tor3} we get that $\epsilon_{N}=\nu_{N-1}=q^2$, where $\epsilon_i$'s are the $\Da$-orders and $v_i$'s are the Frobenius orders of the linear series $\Da$ respectively.
 \item[(3)] \label{remark:4.3} Using Lemma 4.19 in \cite{Tor3}, we have $1,3q_0q,2q,3q_0$, and $q^2$ are $(\Da,P)$-orders for all $P\notin \xr(\fgq)$.
\end{enumerate}
\end{remark}

Next we show that the linear series $\Da$ is a very ample linear series. We will follow the proof of Proposition 8 in \cite{Suzz}.

\begin{prop}\label{prop1:proposition8}
The linear series $\Da=|mP_\infty|$ is a very amply linear series. 
\end{prop}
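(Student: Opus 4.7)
The plan is to apply the standard criterion that $\Da = |mP_\infty|$ is very ample iff $\dim|H - P_1 - P_2| = \dim|H| - 2$ for every pair of points $P_1, P_2 \in \xr$ (including $P_1 = P_2$). Lemma~\ref{lemma:2} already supplies two of the needed ingredients: $\Da$ is base-point-free and simple, so the associated morphism $\pi_\Da \colon \xr \to \PP^N$ is birational onto its image. It therefore remains to verify that $\pi_\Da$ separates distinct points and separates tangent vectors.

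For separation of tangent vectors at an arbitrary $P \in \xr$, I would show that $1$ is a $(\Da, P)$-order. If $P \notin \xr(\fgq)$, this is exactly Remark~\ref{remark:4}(3). If $P \in \xr(\fgq)$, Lemma~\ref{lemma:2}(1) lets us reduce to the case $P = P_\infty$; by Remark~\ref{remark:4}(1) the $(\Da, P_\infty)$-orders are the complements $m - n$ of the non-gaps $n$ at $P_\infty$, and since $-\nu_\infty(w_6) = m-1$ is a non-gap (Table~\ref{table:4.1}), $1$ occurs as a $(\Da, P_\infty)$-order. The transitive action of $\GG$ on $\xr(\fgq)$ transports this conclusion to every rational point, showing that $\pi_\Da$ is unramified everywhere.

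For separation of two distinct points $P \neq Q$, the main tool is the divisor equivalence of Lemma~\ref{lemma:1}, which produces, for each $P \in \xr$, an effective divisor
\[
D_P \;=\; q^2 P + 3q_0 q\,\Phi(P) + 2q\,\Phi^2(P) + 3q_0\,\Phi^3(P) + \Phi^4(P) \;\in\; \Da,
\]
supported on the Frobenius orbit of $P$ of length at most five. When $Q$ lies outside this orbit one has $\nu_Q(D_P) = 0$, and $D_P$ already separates $P$ from $Q$. If $Q = \Phi^i(P)$ for some $i$, I would apply Lemma~\ref{lemma:1} also to $Q$ and take a suitable combination inside $\LL(H)$ to obtain a divisor with $\nu_P > 0$ and $\nu_Q = 0$. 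In the rational case $P, Q \in \xr(\fgq)$, where all Frobenius orbits are trivial, the $2$-transitivity of $\GG$ on $\xr(\fgq)$ reduces matters to separating $P_\infty$ from one other fixed rational point; this is achieved directly by a coordinate function of $\Da'$ (for instance $1 \in \LL(H)$, whose divisor lies entirely away from $P_\infty$).

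The main obstacle will be the overlapping Frobenius-orbit case: when $P$ and $Q$ share a short orbit, the generic divisor $D_P$ carries the wrong multiplicities at $Q$, and the argument requires carefully cancelling $D_P$ against $D_Q$ without introducing extraneous poles outside $P_\infty$. This combinatorial bookkeeping with multiplicities is the technical heart of the proof, mirroring the Suzuki argument of \cite{Suzz}; a secondary subtlety is the verification that $1$ really is a $(\Da, P)$-order at rational points, which relies on identifying a function in $\LL(H)$ of exact pole order $m - 1$ at $P_\infty$ via Table~\ref{table:4.1} before invoking the $\GG$-action.
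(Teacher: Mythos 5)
Your route is not the one the paper takes for this proposition, and it is worth recording the difference. The paper's own proof is very short: since $1,x,y_1,y_2\in\LL(mP_\infty)$, the complete series $\Da$ refines the linear series cut out by the known model of the Ree curve in $\mathbb{P}^3$, which is smooth away from $P_\infty$; hence $\Da$ already separates points and tangent vectors on $\xr\setminus\{P_\infty\}$, and by Proposition~8 of \cite{Suzz} it only remains to check $\ell(mP_\infty)=\ell((m-2)P_\infty)+2$, which holds because both $m=-\nu_\infty(w_8)$ and $m-1=-\nu_\infty(w_6)$ are non-gaps at $P_\infty$. What you propose instead --- Frobenius-orbit divisors for point separation and $(\Da,P)$-orders for tangent separation --- is essentially the argument the paper deploys later for Theorem~\ref{prop:5.5}, so it is a legitimate alternative route; in particular your tangent-vector half (the order $1$ occurs at non-rational points by Remark~\ref{remark:4}(3), and at rational points because $m-1$ is a non-gap at $P_\infty$ together with the transitive $\GG$-action) is sound.

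The point-separation half, however, has a genuine gap exactly where you flagged the ``main obstacle.'' When $P\ne Q$ lie in a common short Frobenius orbit, the plan to ``take a suitable combination'' of $D_P$ and $D_Q$ inside $\LL(H)$ does not work: divisors in a linear series cannot be subtracted or cancelled against one another, and nothing in your setup produces an element of $\LL(H)$ whose divisor contains $P$ but misses $Q$. The correct resolution is not a cancellation but a vanishing of the case: if $Q\in\supp(D_P)$ \emph{and} $P\in\supp(D_Q)$, then $\Phi^{k}(P)=P$ for some $k\le 5$, so $P\in\xr(\fg{q^k})$; the explicit point counts \eqref{eq:Reefqm} give $\#\xr(\fg{q^r})=q^3+1=\#\xr(\fgq)$ for $r=2,3,4,5$, forcing $P$ and $Q$ to be $\fgq$-rational, whence $\Phi(P)=P$ and $P=Q$, a contradiction. (If instead $P\notin\supp(D_Q)$, the divisor $D_Q$ itself already separates the pair.) Without this coincidence of point counts over the first few extension fields, the overlapping-orbit case cannot be closed, so as written your argument is incomplete.
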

\begin{proof}
 Let $\varphi:\xr \to \mathbb{P}^{N-1}$ be the morphism associated to $\Da$. Since $\{1,x,y_1,y_2 \} \subseteq \LL(mP_\infty)$, $|mP_\infty|$ contains the linear series $\mathbb{P}(\langle 1,x,y_1,y_2\rangle)$ which induces a model for the Ree curve in $\mathbb{P}^{3}$ with a singularity at $P_\infty$. Thus, the morphism $\varphi$ is injective and separates tangent vectors (i.e., $\varphi$ has a non-zero differential) at any point in $\xr \setminus \{P_\infty\}$. In order to show $|mP_\infty|$ is a very ample linear series, we need only to show that $\ell (mP_\infty)=\ell ((m-2)P_\infty) +2$ \cite[Proposition 8]{Suzz}. But since $m$ and $m-1$ are non-gaps at $P_\infty$, then $\ell(mP_\infty)=\ell((m-1)P_\infty)+2$. Therefore, $|mP_\infty|$ is a very ample linear series. 
\end{proof}

Now we apply the idea of the proof of Proposition \ref{prop1:proposition8} to show that the linear series $\Da_1$ is a very ample of dimension 13.
\begin{theorem} \label{prop:5.5}
$\Da$ is a very ample linear series.
\end{theorem}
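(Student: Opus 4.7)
The plan is to mirror the argument of Proposition~\ref{prop1:proposition8}, applied now to the subseries $\Da_1 = \mathbb{P}(\Da')$ spanned by the $14$ explicit functions $1,x,y_1,y_2,w_1,\dots,w_{10}$. The preparatory step is to confirm $\dim \Da_1 = 13$; this follows from Table~\ref{table:4.1}, which shows that these generators have pairwise distinct pole orders at $P_\infty$ and are hence linearly independent over $\fgqc$.

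Let $\pi : \xr \to \mathbb{P}^{13}(\fgqc)$ be the morphism associated to $\Da_1$. Since $\langle 1,x,y_1,y_2\rangle \subseteq \Da'$, the composition of $\pi$ with a coordinate projection onto $\mathbb{P}^3$ realizes the classical birational map onto the singular Ree model of \cite{Ped}, which is non-singular away from $P_\infty$. Exactly as in the proof of Proposition~\ref{prop1:proposition8}, this forces $\pi$ to be injective on $\xr \setminus \{P_\infty\}$ and to have nonzero differential there. So the only remaining task is to verify that $\pi$ is a closed immersion at $P_\infty$.

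For this I would invoke the standard criterion that a base-point-free linear series is very ample iff for every effective divisor of degree two of the form $P+Q$ (with $P,Q$ possibly equal) the dimension drops by exactly two. By the previous paragraph, only the case $P=Q=P_\infty$ needs attention, which in turn reduces to checking that the pole-order filtration of $\Da'$ at $P_\infty$ realizes two consecutive integers at its top. This is immediate from the generators: $-\nu_\infty(w_8)=m$ and $-\nu_\infty(w_6)=m-1$, as recorded in the proof of Lemma~\ref{lemma:2}(3), so both $m$ and $m-1$ occur as pole orders of elements of $\Da'$ and the required consecutive drops in dimension are forced. The only conceptually non-routine point is thus the behavior at $P_\infty$, but it reduces entirely to the pole-order bookkeeping already carried out for Pedersen's functions, so no new difficulty arises.
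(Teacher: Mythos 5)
Your argument is correct in substance, but it is not the route the paper takes for this theorem: what you have written is essentially the proof of Proposition~\ref{prop1:proposition8} (projection onto the singular $\mathbb{P}^3$ model plus the two consecutive nongaps $m,m-1$ at $P_\infty$) transported to the $13$-dimensional subseries $\Da_1=\mathbb{P}(\Da')$. The paper's own proof of Theorem~\ref{prop:5.5} argues differently on both counts. For separation of points it uses the linear equivalence \eqref{eq:equiv}, $q^2P+3q_0q\Phi(P)+2q\Phi^2(P)+3q_0\Phi^3(P)+\Phi^4(P)\sim mP_\infty$: if $P$ and $Q$ are not separated then each lies in the support of the other's distinguished divisor, forcing $\Phi^r(P)=P$ for some $r\le 5$, and the coincidence $\#\xr(\fgq)=\#\xr(\fg{q^r})$ for $r=2,\dots,5$ then forces $P=Q\in\xr(\fgq)$. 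For separation of tangent vectors it verifies $j_1^{\Da_1}(P)=1$ case by case: at $P_\infty$ via $w_6$, at affine rational points via the function $x$ at $P_{000}$ transported by the transitive action of $G_{\fr}(P_\infty)$ (Lemma~\ref{lemma:7.1}), and at non-rational points via the $(\Da,P)$-order sequence $1,3q_0,2q,3q_0q,q^2$. Your approach is shorter and, notably, works \emph{directly} with the subseries $\Da'$ (since $1,x,y_1,y_2\in\Da'$), whereas the paper's point-separation argument is phrased for the complete series $\Da$ and then transferred to $\Da_1$; the cost is that you lean on the unproved-in-your-text assertion that the space model is non-singular away from $P_\infty$ (true, by the rank-$2$ Jacobian of the two Artin--Schreier equations, but it should be said). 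The paper's longer route buys the order-sequence and group-action information that is reused in Sections~\ref{sec1:7} and~\ref{sec1:8}.

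Two small points to repair. First, your claim that ``only the case $P=Q=P_\infty$ needs attention'' omits the mixed divisors $P+P_\infty$ with $P\ne P_\infty$: injectivity on $\xr\setminus\{P_\infty\}$ does not separate $P_\infty$ from an affine point. This is a one-line fix — the constant function $1\in\Da'\cap\LL((m-1)P_\infty)$ does not vanish at any affine $P$, or equivalently the first coordinate of $\pi$ vanishes at $\pi(P_\infty)$ but not at $\pi(P)$ — but it must be said. Second, for the case $2P_\infty$ you implicitly use that the $14$ generators have pairwise distinct pole orders so that exactly the two generators $w_8,w_6$ (with pole orders $m$ and $m-1$ from Table~\ref{table:4.1}) are removed when passing to $\Da'\cap\LL((m-2)P_\infty)$; you state the distinctness at the outset, so this is fine, but the two facts should be linked explicitly.
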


\begin{proof}


Recall that $\Da_1$ is a very ample linear series if $\Da_1$ separates points and tangent vectors \cite[Page 308]{Har}. To show $\Da$ separates points, let $P,Q\in \mathcal{R}$, we want to show that there exists a positive divisor $D \in \Da$ such that $P \in $ Supp($D$) and $Q \notin $ Supp($D$) (this is equivalent to showing that the morphism associated to $\Da$ is injective). From the equivalence \eqref{eq:equiv}, we have
\begin{align*}
 A:=&q^2P+3q_0q\Phi(P)+2q\Phi^2(P)+3q_0\Phi^3(P)+\Phi^4(P) \\
&\sim B:=q^2Q+3q_0q\Phi(Q)+2q\Phi^2(Q)+3q_0\Phi^3(Q)+\Phi^4(Q) \sim mP_\infty.
\end{align*}
If $Q\notin $ Supp($A$), then we are done. If $Q \in $ Supp($A$), then $Q=\Phi^i(P)$, for some $i=0,1,2,3,4$. Therefore, we have
\[
\{Q,\Phi(Q),\Phi^2(Q),\Phi^3(Q),\Phi^4(Q) \}= \{\Phi^{i}(P),\Phi^{i+1}(P),\Phi^{i+2}(P),\Phi^{i+3}(P),\Phi^{i+4}(P) \}.
 \]
Similarly, if we exchange the role of $P$ and $Q$, we have if $P\notin $ Supp($B$), then we are done. If $P \in $ Supp($B$), then $P=\Phi^j(Q)$ for some $j=0,1,2,3,4$. Therefore, we have $\{P,\Phi(P),\Phi^2(P),\Phi^3(P),\Phi^4(P) \}= \{\Phi^{j}(Q),\Phi^{j+1}(Q),\Phi^{j+2}(Q),\Phi^{j+3}(Q),\Phi^{j+4}(Q) \} $.
Therefore, $\Phi^{i+j}(P)=P$. Then, by examining the cases we get that $\Phi^5(P)=P$ or $\Phi^4(P)=P$ or $\Phi^3(P)=P$. But as in the Equation \eqref{eq:Reefqm} we have
\begin{align*}
 \#\xr(\fg{q^r})=q^r+1-2\sqrt{q^r}\left ( a\cos \left ( \frac{5\pi r}{6}\right )+b\cos \left ( \frac{\pi r}{6} \right )\right ).
\end{align*}
Hence, we have $\#\xr(\fgq)=q^3+1=\#\xr(\fg{q^2})=\#\xr(\fg{q^3})=\#\xr(\fg{q^4})=\#\xr(\fg{q^5})$. Therefore, we must have $P,Q\in \mathcal{R}(\fgq)$, and so $P=Q$. That means $\Da$ separates points and so $\Da_1$ separates points.

To show that $\Da_1$ separate tangent vectors, it is sufficient to show that $j^{\Da_1}_1(P)=1$ for all $P \in \mathcal{R}$ \cite{TGK}. For $P=P_\infty$, we have $\nu_{P_\infty}(mP_\infty+(w_6))=m-(m-1)=1$, so $j^{\Da_1}_1(P_\infty)=1$. Now we can apply the same idea of the proof in \cite[Proposition 8]{Suzz} as $P_\infty$ is the only singular point of the Ree curve model $\xr^{\text{sin}}$ defined by the two equations $y_1^{q}-y_1=x^{q_0}(x^q-x)$ and $y^{q}_2-y_2=x^{q_0}(y_1^{q}-y_1)$. So any $P \in \xr^{\text{sin}}\setminus \{ P_\infty \}$ separates points and tangent vectors. More concretely, for $P \in \xr(\fgq)\setminus \{P_\infty \}$, we need to show that $j_1(P)=1$ which is equivalent to finding a divisor $D=mP_\infty+(t_P) \in \Da_1$ with $\nu_P(t_P)=1$ and $t_{P}\in \langle t,x,y_1,y_2,w_1,\dots,w_{10} \rangle$.

Consider first the point $P_{000}$ and set $t_{000}:=x$, we have $\nu_{P_{000}}(t_{000})=e(P_{000}\placeextension P_{0})\cdot \nu_{P_{0}}(x)=1$ and clearly $x \in \Da'$. Therefore, $j_1(P_{000})=1$. Now we know that the maximal subgroup $G_{\fr}(P_\infty)$ that fixes the point $P_\infty$ acts linearly and transitively on the affine $\fgq$-rational points (Lemma \ref{lemma:7.1}) which means for any other place $P_{\alpha \beta \gamma}$, there exists $\psi \in G_{\fr}(P_{\infty})$ such that $\psi(P_{000})= P_{\alpha \beta \gamma}$, set $t_{\alpha \beta \gamma}:=\psi(t_{000})=\psi(x)$. Then, we have
\begin{align*}
 \nu_{P_{\alpha \beta \gamma}}(t_{\alpha \beta \gamma})&= \nu_{\psi(P_{000})}(t_{\alpha \beta \gamma})=\nu_{P_{000}}(\psi^{-1}(t_{\alpha \beta \gamma}))\\
       &= \nu_{P_{000}}(x)=1.
\end{align*}
Hence, $\nu_{P}(mP_{\infty}+(t_{\alpha \beta \gamma}))=1$ and so $j^{\Da_1}_1(P)=1$, for all $P \in \xr(\fgq)$.

Finally for a non-rational point $P \in \mathcal{R}$ with $P \notin \xr(\fgq)$ and $\Phi^i(P)\ne P$ $(i=0,1,2,3,4)$, we have $1,3q_0,2q,3q_0q,q^2$ are $(\Da,P)$-orders as in Remark \ref{remark:4} (3). Therefore $j^{\Da}_1(P)=1$ and so $j^{\Da_1}_1(P)=0,1$. But since for any $\fgq$-rational point $Q\in \xr(\fgq)$, $j^{\Da_1}_1(Q)=1$ and any point $P \notin \xr(\fgq)$ lies over some $Q\in \xr(\fgq)$ with ramification index one, we have that $j^{\Da_1}_1(P)=1$ as well. 
\end{proof}

\begin{cor}
The morphism $\pi=(t:x:y_1:y_2:w_1:\cdots:w_{10})$ associated to the very ample linear series $\Da=|mP_\infty|$ is a smooth embedding of the Ree curve in the projective space $\mathbb{P}^{13}$.
\end{cor}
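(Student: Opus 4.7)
The plan is to deduce the corollary directly from Theorem \ref{prop:5.5}. The argument proceeds in three short steps. First, I will unpack what was really established in the proof of that theorem: the separation of points and tangent vectors was verified using only functions drawn from the 14-dimensional subspace $\Da'=\langle 1,x,y_1,y_2,w_1,\dots,w_{10}\rangle$, so what was actually shown is that the sub-linear series $\Da_1=\mathbb{P}(\Da')\subseteq\Da$ is itself very ample. Second, I will observe that a very ample (complete or not) linear series on a smooth projective curve induces a closed immersion into the associated projective space; composed with the smoothness of $\mathcal{R}$ this yields a smooth embedding. Third, I will identify the target projective space as $\mathbb{P}^{13}$.

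For the target dimension, I would invoke the valuation table in Remark \ref{sec1:ZeroPole} (the pole orders of $1, x, y_1, y_2, w_1, \dots, w_{10}$ at $P_\infty$ recorded in Table \ref{table:4.1}). These 14 pole orders are pairwise distinct, so the 14 generators of $\Da'$ are linearly independent over $\fgqc$, whence $\dim \Da_1 = 13$ and $\pi$ maps $\mathcal{R}$ into $\mathbb{P}^{13}$. Since $\pi$ is a closed immersion of a smooth curve, its image is a smooth projective curve, namely the irreducible component of $\mathcal{X}\subseteq\ppfgqc$ containing the image of an affine point (by Lemma \ref{sec3:lemma105}, the 105 equations of Section \ref{sec1:4} are satisfied on the image, so $\pi(\mathcal{R})\subseteq\mathcal{X}$).

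The only subtlety is that the statement asserts $\pi$ is associated to the complete series $\Da=|mP_\infty|$ rather than to the potentially smaller $\Da_1$. I would address this by noting that the corollary as phrased is insensitive to the distinction: even if $\Da_1\subsetneq\Da$, the morphism $\pi$ is still a smooth embedding into $\mathbb{P}^{13}$, since very ampleness passes from $\Da_1$ to any super-series. The genuine equality $\Da_1=\Da$ (which certifies that the embedding is by a complete linear series) is not needed here; it is postponed to Section \ref{sec1:8.1} where it is proved over $\fg{27}$ by computing the Weierstrass semigroup at $P_\infty$.

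The main obstacle is therefore not conceptual but bookkeeping: one must check that the argument inside the proof of Theorem \ref{prop:5.5} never leaves $\Da'$, so that the separation properties really are witnessed by elements of $\Da_1$ and not just by elements of $\Da$. Inspecting that proof, the divisor $(w_6)+mP_\infty$ supplies the tangent at $P_\infty$, the function $x$ supplies the tangent at $P_{000}$, and the translates $\psi(x)$ under the action of the stabilizer $G_{\fr}(P_\infty)$ handle the other rational points; all these functions lie in $\Da'$ because the Ree group acts linearly on the chosen coordinates (as established in Section \ref{sec1:7}). Hence Theorem \ref{prop:5.5} does deliver very ampleness of $\Da_1$, and the corollary follows.
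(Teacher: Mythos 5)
Your proposal is correct and follows essentially the same route as the paper, whose proof of this corollary is simply the citation of Lemma \ref{lemma:2}(3) (base-point-freeness and simplicity of $\Da_1$) together with Theorem \ref{prop:5.5}; your write-up is a careful expansion of exactly that combination, including the useful observation that the separation arguments in Theorem \ref{prop:5.5} are witnessed by functions in $\Da'$ and the dimension count via the pairwise distinct pole orders in Table \ref{table:4.1}. The only ingredient you leave implicit that the paper cites explicitly is the base-point-freeness of $\Da_1$ (Lemma \ref{lemma:2}(3)), which is needed for $\pi$ to be a morphism everywhere, but this is a minor omission and does not affect correctness.
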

\begin{proof}
 Lemma \ref{lemma:2} (3) and Theorem \ref{prop:5.5}.
\end{proof}

Now we will give a concrete realization of the Ree curve in $\ppfgqc$ using the morphism $\pi:=(1:x:y_1:y_2:w_1:\cdots:w_{10})$ as follows. We have from the Equations (5.3) in \cite{Tits} that
\begin{equation} \label{eq:3*}
\begin{aligned}
w_3&=x^{3q_0+3}-w_1^{3q_0}-x^2w_1-xw_2,\\
w_6&=x^{6q_0+3}-x^{3q_0}w_1^{3q_0}-w_2^{3q_0}-xw_1^2+w_1w_2,\\
w_8&=w_2^2-xw_6-w_1w_3.
\end{aligned}
\end{equation}
Moreover, we have from the 105 equations in Section \ref{sec1:4} the following equations:
\begin{equation}\label{eq:3**}
\begin{aligned}
y_1&=x^{q_0+1}-w_1^{q_0},\\
y_2&=x^{q_0}y_1-w_2^{q_0},\\
w_4&=xw_2^{q_0}-y_1w_1^{q_0},\\
w_7&=w_6^{q_0}-x^{q_0}w_4,\\
w_5&=w_8^{q_0}-x^{q_0}w_7,\\
w_9&=xw_8^{q_0}-w_1^{q_0}w_7,\\
w_{10}&=y_2w_6^{q_0}-w_3^{q_0}w_4.
\end{aligned}
\end{equation}

Therefore, the curve $\mathcal{R}$ can be given as the variety in the projective space $\ppfgqc$ defined by the set of points \[P_{\chi,\omega_1,\omega_2}:=\left [1:\chi:\upsilon_1:\upsilon_2:\omega_1:\cdots:\omega_{10}\right ]\]  and \[P'_{\infty}:=\pi(P_\infty)=\left [0:0:\cdots:0:1:0:0 \right ],\] where $\chi=x\in \fgqc, \omega_1=w_1 \in \fgqc$, and $\omega_2=w_2 \in \fgqc$ satisfying the equations \eqref{eq:w1x}, \eqref{eq:w2xw1}, and $\upsilon_1,\upsilon_2,\omega_i \in \fgqc$ are defined according to \eqref{eq:3*}, \eqref{eq:3**}. Using the results of Section \ref{sec1:6}, \cite{Ped},\cite{Tits} we will have that the Ree group acts as the automorphism group on the smooth model of the Ree curve.

\begin{remark}
Another way to show the variety $\mathcal{X}\subseteq \ppfgqc$ defined as the zero locus of the 105 equations (Sets 1--4) of Section \ref{sec1:4} is birationally equivalent to the Ree curve $\xr$ is to show directly that $\mathcal{X}$ is smooth curve with a function field isomorphic to the Ree function field.

First to show $\mathcal{X}$ is smooth, consider again the morphism $\pi:=(1:x:y_1:y_2:w_1:\cdots:w_{10})$ and let $P'_\infty:=\pi(P_\infty):=[0:\cdots:0:1:0:0]$. Using Equations \eqref{ReeEq:1}--\eqref{ReeEq:10}, the morphism $\pi$ is given by polynomial expressions in $1,x,y_1,y_2$. This implies that $\mathcal{X}$ is smooth at every affine point since the only singularity of the model $\xr^{\text{sin}}$ of the Ree curve is at $P_\infty$. To show $\mathcal{X}$ is smooth at $P'_\infty$, we have computed the Jacobian matrix of partial derivatives at $P'_\infty$ and we have found that it has the maximum possible rank which is 12. Therefore, $\mathcal{X}$ is a smooth curve in $\PP^{13}(\fgqc)$.

It remains to show that $\fgqc({\mathcal{X}})$ is isomorphic to the Ree function field, but this has been done in Lemma \ref{sec3:lemma105}. Therefore, $\mathcal{X}$ is a smooth model for the Ree curve in the projective space.

\end{remark}

\section{Relation to the Previous Work on the Embeddings of the Deligne-Lusztig Curves}\label{sec1:6}
In this section we relate the results of this paper with the work of Kane \cite{Kane}. Kane constructed smooth embeddings for the three Deligne-Lusztig curves associated to the groups $\AAA$ (Hermitian curve), $\BB$ (Suzuki curve), and $\GG$ (Ree curve). In the case of $G_2$, Kane gave an explicit (but rather abstract) embedding from $X_{\Gtwo} \to \PP(W)$, where $W$ is a representation of $\GG$ of dimension 14. For the curve constructed by Kane it is not clear if it is the Ree curve constructed by Pedersen \cite{Ped} or Tits \cite{Tits}, although both curves are of the same genus and having the same number of $\fgq$-rational points with $\GG$ as an automorphism group. Therefore, both curves are isomorphic by the uniqueness result of Hansen and Pedersen \cite{HP} (See Figure \ref{fig:E1}). In this section we will show that the set of $\fgq$-rational points for Kane's embedding \cite{Kane} is the same set of the $\fgq$-rational points of the embedding given in Section \ref{sec1:5}. This result depends on the construction of the Ree group $\GG$ given in Section \ref{sec1:2.1}. First we recall some notations from that section.

Let $V$ be a 7-dimensional vector space with basis $\{i_t: t \in \fg{7} \}$ and anti-commutative multiplication $i_t\cdot i_{t+r}:=i_{t+3r}$ $(r=1,2,4)$. Thus, $V$ can be identified with the pure imaginary part of the Octonion algebra $\OO$ \cite[Chapter 4]{Wil2}. Let $m:\Wedge(V) \to V$ be the map defined by the multiplication above and let $W:=\ker(m)$ which is a 14 dimensional vector space with basis $\{i_t^*,i_t'\}$ as defined in Section \ref{sec1:2.1}. Let $V':=\text{Im}(\mu) \subseteq \ker(m)=W$, where $\mu:V \to \Wedge(V)$ is defined by $\mu(i_t)=\sum_{r=1,2,4}i_{t+r}\land i_{t+3r}$. Set $V^*:=W/V'\simeq V$.

For $z \in \OO$ with $z=a+\sum_{t \in \mathbb{F}_{7}}a_ti_t$, we recall that the pure and imaginary part of $z$ are $\text{Re}(z):=a$ and $\text{Im}(z):=\sum_{t \in \fg{7}}a_ti_t$, respectively. Note that the map $m$ is the same map on the pure imaginary part of the Octonion algebra, i.e., $m$ is the map $*:V \times V \ni (x,y)\mapsto \text{Im}(x\cdot y)\in V$. Moreover, let $(-,-):V\times V \ni (x,y) \mapsto \text{Re}(x \cdot y) \in V$. Having these notations in place, we describe briefly the embedding given by Kane. For any given Borel subgroup $B \subseteq \Gtwo$, it fixes a complete flag
\[
0  \subseteq L=\langle x \rangle \subseteq M=\langle x,y\rangle \subseteq S=\langle x,y,z \rangle \subseteq S^{\perp} \subseteq M^{\perp} \subseteq L^{\perp} \subseteq V
\]
with the property $(M,M)=0=M*M$. Fix the canonical isomorphism $F:V^* \to V$. Then, $\sigma(B)$ is the Borel subgroup fixing $F(\langle x \land y \rangle)$ and $F(\langle x \land y,x \land z \rangle)$. We pick $w_{M,L} \in W(\Gtwo)$ such that for any $B \in X_{\Gtwo}(w_{M,L})$, we have that $B$ and $\sigma(B)$ are fixing the same line $L$ and plane $M$. We write $X_{\Gtwo}$ for $X_{\Gtwo}(w_{M,L})$. Then, we can define the embedding by
\begin{align*}
X_{\Gtwo} &\to \mathbb{P}(W)\\
B &\mapsto [w:=x \land y]
\end{align*}
Now the relations in \cite[Section 5.3]{Kane} that define the curve of this embedding guarantee that $w\in W$. Also, for any $w \in \wedge^2(V)$, if $ x=F(w)$, then $w=x \land y$, i.e., $x^* \equiv x \land y \pmod{V'}$. We get by \cite{Wil1} that the image of this embedding is the set of all $*$-points of the Ree group. Moreover, $B$ is the unique Borel subgroup such that $x \land y$ is parallel to $w$. Therefore, the embedding is well-defined and since the resulting smooth curve has the same genus, number of $\fgq$-rational points, and automorphism group $\GG(q)$ as the Ree curve, it is isomorphic to the Ree curve defined by Pedersen \cite{Ped,HP}. Note that $B \in X_{\Gtwo}$ corresponds to a $\fgq$-rational point in $\mathbb{P}^{13}$ if and only if $\sigma(B)=B$ if and only if $F(x \land y)$ is in the plane $M$.

To see that Kane's embedding is similar to our embedding at the level of $\fgq$-rational points, we use the new basis $\{v_i^*,v_i' \}$ of $W$ defined in Table \ref{table:basis of W}. First we have the correspondence between $1,x,y_1,\dots,w_{10}$ and the basis of $W$ as shown in Table~\ref{table1:WwithDa'}.
\begin{table}
\begin{alignat*}{3}
&v_{-3}^{'}=-w_{10},\qquad \qquad &&v_{-2}^{'}=-w_{9},\qquad \qquad &&v_{-1}^{'}=-w_{5},\\
&v_{3}^{'}=y_{1},\qquad \qquad &&v_{2}^{'}=y_{2},\qquad \qquad &&v_{1}^{'}=w_{4},\\
&v_{0}^{'}=w_7, \qquad \qquad &&v_{0}^{*}=w_2, \qquad            &&v_{-1}\land v_{1}=v\\
&v_{-3}^{*}=-w_{8},\qquad \qquad &&v_{-2}^{*}=x,\qquad \qquad &&v_{-1}^{*}=w_{1},\\
&v_{3}^{*}=1,\qquad \qquad &&v_{2}^{*}=-w_{6},\qquad \qquad &&v_{1}^{*}=-w_{3}.
\end{alignat*}
\caption{The correspondence between the basis of $W$ and the basis of $\Da'$.}\label{table1:WwithDa'}
\end{table}
Since the image of the embedding over $\fgq$ is the $*$-points $v \in V$ such that $v^* \equiv v \land w \pmod{V'}$, for some $w\in V$ \cite{Wil1}, the $\fgq$-rational points of Kane can be found by taking $\alpha_2,\alpha_1,\alpha_0\in \fgq$, say $v =v_3 + \sum_{r=-3}^{2}\alpha_r v_r$ and we find $\alpha_{-3},\alpha_{-2},\alpha_{-1}$ using Wilson's algorithm \cite[Section 3]{Wil1}. This will also give us $w=\sum_{r=-3}^{2}\beta_r v_r$ such that $v^* \equiv v \land w \pmod{V'}$. Write $v \land w$ in the basis $\{v_i^*,v_i' \}$ of $W$, i.e.,
\[
 v \land w =\sum_{r=-3}^{3}a_r v_r^* + \sum_{r=-3}^{3}b_r v'_r.
\]
Then, this will give a point in $\mathbb{P}(W)$ which corresponds to
\[
\left [ a_{3}:a_{-2}:b_{3}:b_{2}:a_{-1}:a_{0}:a_{1}:b_{1}:b_{-1}:a_{2}:b_{0}:a_{-3}:b_{-2}:b_{-3} \right ] \in  \ppfgqc
\]
by the isomorphism $\mathbb{P}(W) \ni [\sum_{i=1}^{14}{n_iw_i}] \mapsto [n_1:\cdots:n_{14}] \in \ppfgqc$. This point is a $\fgq$-rational point in our embedding because it is the evaluation of the rational functions $x,y_1,y_2$, $w_1,\dots,w_{10}$ at $x=\alpha_2,y_1=\alpha_1,y_2=\alpha_0$ on the Ree curve. Moreover, we have $v_{-3}^*=v_{-3}\land v_{-2}$ corresponds to the point at infinity $P'_{\infty}=[0:0:0: \cdots: 0 :1:0 :0]$. Note that we have used the fact that the Ree unital defined as the set of all $*$-rational points is the set $\Gamma$ of $\fgq$-rational points in the Ree curve.

\begin{figure}[htb!]
\centering

\begin{tikzpicture}
  [scale=1,auto=left, minimum size=5em]

  \node [rectangle,draw,text width=4cm, align = center] (b) at (1,1)  {Kane's embedding \cite{Kane}};
  \node [rectangle,draw,text width=4cm, align = center] (a) at (1,9)  {Ree curve as a Deligne-Lusztig curve,  \ref{sec1:2.2}};

  \node [rectangle,draw,text width=4cm, align = center] (c) at (5,5) {Uniqueness theorem of Hansen and Pedersen \cite{HP}};

  \node [rectangle,draw,text width=4cm, align = center] (e) at (9,1)  {Embedding in $\ppfgqc$, Section \ref{sec1:5}};
  \node [rectangle,draw,text width=4cm, align = center] (d) at (9,9)   {Ree curve defined by equations \cite{Ped}};


  \draw [->] (a) --    (c);
  \draw [->] (d) --   (c);
  \draw [->] (a) --  (b);
  \draw [->] (d) --  (e);
  \draw [->] (b) --  (e);

\end{tikzpicture}
\caption{The relation between the two embeddings.}\label{fig:E1}
\end{figure}
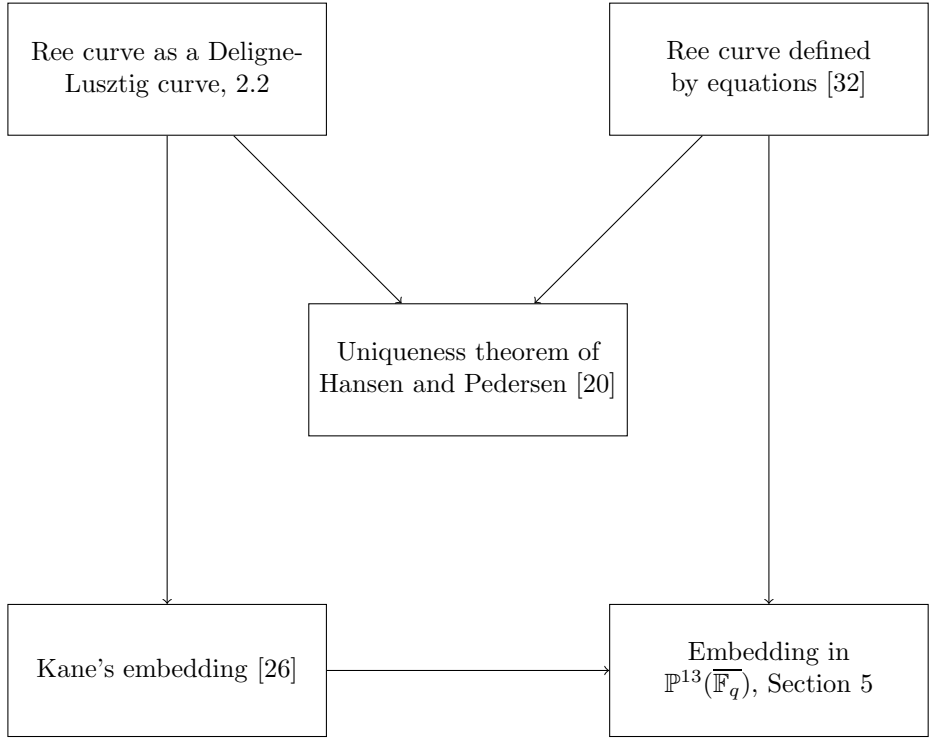




\section{Representation of the Ree Group}\label{sec1:7}
The Ree group $\GG(q)$ is the automorphism group of the Ree curve $\xr$ \cite{Ped}. In this section we show how the Ree group acts on the smooth model $\mathcal{R}$ of the Ree curve in the projective space $\ppfgqc$. First we show that the subgroup $G_{\fr}(P_\infty)$ of $\GG(q)$ which fixes the point at infinity $P_\infty$ acts linearly on the space $\Da'$ generated by $1,x,y_1,y_2,w_1,\dots,w_{10}$ and the action can be represented by lower triangular matrices.

\begin{lemma}\label{lemma:7.1}
 The subgroup $G_{\fr}(P_\infty)$ that fixes the point $P_{\infty}$ acts linearly on the space $\Da'=\langle 1,x,\dots,w_{10} \rangle$ over $\fgq$. Moreover, the action can be represented by lower triangular matrices.
\end{lemma}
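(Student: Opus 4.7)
The plan is to exploit the fact that any $\psi \in G_{\fr}(P_\infty)$ fixes the place $P_\infty$ and hence preserves the valuation $\nu_\infty$ together with every Riemann-Roch space $\LL(nP_\infty)$. Combined with the pole-order structure of the fourteen spanning functions, this will give linearity and lower-triangularity simultaneously.

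First I would read off the pole orders at $P_\infty$ of the fourteen functions $1, x, y_1, y_2, w_1, \ldots, w_{10}$ from Table \ref{table:4.1} (see Remark \ref{sec1:ZeroPole}) and verify that they are pairwise distinct. Reordering the generators by increasing pole order yields an ordered basis $f_0, f_1, \ldots, f_{13}$ of $\Da'$ with $0 = v_0 < v_1 < \cdots < v_{13} = m$, where $v_i := -\nu_\infty(f_i)$, so that $f_i \in \LL(v_i P_\infty) \setminus \LL((v_i-1)P_\infty)$.

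The crucial step is to show $\psi(\Da') \subseteq \Da'$. There are two routes. The clean one is to invoke the equality $\Da' = \LL(mP_\infty)$, which will be established in Section \ref{sec1:8.1}; since $\psi$ automatically preserves $\LL(mP_\infty)$, invariance of $\Da'$ follows. A more hands-on alternative is to check invariance on an explicit set of generators of $G_{\fr}(P_\infty)$. Following Pedersen's description the stabilizer is a semidirect product of a diagonal torus (rescaling $x,y_1,y_2$ by powers of $\lambda \in \fgq^\times$) and a unipotent ``translation'' subgroup parametrized by $(\alpha,\beta,\gamma) \in \fgq^3$ whose action on $x,y_1,y_2$ is given by explicit polynomial formulas. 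For each generator one computes $\psi(w_k)$ by substituting into the defining expressions \eqref{ReeEq:1}--\eqref{ReeEq:10} and simplifies using \eqref{eq:Ree1}, \eqref{eq:Ree2} together with the 105 relations of Section \ref{sec1:4}, verifying that the result is again in $\Da'$.

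Once invariance is in hand, the lower-triangular structure is immediate: for each $i$, the image $\psi(f_i)$ lies in $\Da' \cap \LL(v_i P_\infty)$, and the distinctness of the $v_j$ forces this intersection to equal $\mathrm{span}\{f_0,\ldots,f_i\}$ because any nonzero element must have pole order $v_j$ for some $j \le i$. Writing $\psi(f_i) = \sum_{j \le i} a_{ji} f_j$ exhibits the matrix of $\psi$ in the basis $(f_0,\ldots,f_{13})$ as lower triangular. The main obstacle will be the invariance step in the hands-on approach: the torus action is diagonal by inspection, but checking that the unipotent translations preserve $\Da'$ requires careful polynomial bookkeeping with the relations among the fourteen functions. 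This is the reason to prefer, when available, the cleaner argument via $\Da' = \LL(mP_\infty)$.
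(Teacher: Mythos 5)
Your fallback (``hands-on'') route is exactly what the paper does: it writes down the explicit generators $\psi_{\alpha\beta\gamma\delta}$ of $G_{\fr}(P_\infty)$ from Pedersen (with $\alpha\in\fgq^\times$ the torus parameter and $\beta,\gamma,\delta\in\fgq$ the unipotent parameters -- three, not your $(\alpha,\beta,\gamma)$, but this is only notation), computes $\psi(w_k)$ for each $k$ using the defining relations, and records the resulting formulas in Appendix~\ref{app:2}; linearity and triangularity are then read off by inspection. Your observation that triangularity follows abstractly from the pairwise distinctness of the fourteen pole orders in Table~\ref{table:4.1} (so that $\Da'\cap\LL(v_iP_\infty)=\mathrm{span}\{f_0,\dots,f_i\}$ and $\psi(f_i)\in\LL(v_iP_\infty)$ forces a triangular matrix) is a genuine improvement over the paper's ``it is clear from the action above'': it separates the triangularity from the bookkeeping and makes it automatic once invariance of $\Da'$ is known. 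The distinctness check does go through for all admissible $q$ (one needs $q+q_0<q_0q$ and $3q_0+1\le q$, both true for $q=3q_0^2$, $q_0\ge 3$).

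The route you say you would prefer, however, has a real gap. The equality $\Da'=\LL(mP_\infty)$ is \emph{not} available in the generality you need: the paper only establishes $\Da_1=\Da$ over $\fg{27}$, by the explicit MAGMA computation of the Weierstrass semigroup in Section~\ref{sec1:8.1} (where one sees that the $14$ pole orders are precisely the nongaps $\le m$ for $q=27$). For general $q$ no such identification is proved, and a priori $\ell(mP_\infty)$ could exceed $14$, in which case ``$\psi$ preserves $\LL(mP_\infty)$'' would not yield $\psi(\Da')\subseteq\Da'$. Since Lemma~\ref{lemma:7.1} is invoked for arbitrary $q$ in the proof of Theorem~\ref{prop:5.5} (to get linear, transitive action on the affine rational points), you cannot lean on the clean route there; the explicit computation on generators, tedious as it is, is the step that actually carries the proof. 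So keep the clean argument as a remark for $q=27$, but make the hands-on verification the proof.
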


\begin{proof} We recall from \cite{Ped} that \[G_{\fr}(P_\infty):=\{\psii \in \Auti(\fr) \,: \, \alpha \in \fgq^\times, \beta,\gamma,\delta \in \fgq\},\] where
\begin{align*}
 \psii(x)&:=\alpha x + \beta,\\
 \psii(y_1)&:= \alpha^{q_0+1}y_1 + \alpha \beta^{q_0}x + \gamma,\\
 \psii(y_2)&:=\alpha^{2q_0+1}y_2 - \alpha^{q_0+1}\beta^{q_0}y_1 +\alpha \beta^{2q_0} x + \delta.
\end{align*}
Direct calculations using the 105 equations of Section \ref{sec1:4} yield the action in the Appendix \ref{app:2}. This shows that $G_{\fr}(P_\infty)$ acts linearly on $\Da'$. Moreover, it is clear from the action above that it can be represented by lower triangular matrices.
\end{proof}

\begin{remark}\begin{enumerate}
  \item[(1)] From the proof of Lemma \ref{lemma:7.1}, we see that the subgroup $G_{\fr}(P_\infty)$ also acts linearly on the space $V:=\langle 1,x,w_1,w_2,w_3,w_6,w_8 \rangle$. This is the same action of the subgroup $G_{\fr}(P_\infty)$ on a 7-dimensional space as in \cite{Tits}.
  \item[(2)] $G_{\fr}(P_{\infty})$ is the subgroup $B$ used in Wilson \cite{Wil1}, see Section \ref{sec1:2.1}. 
 \end{enumerate}
\end{remark}

Next, we show that the Ree group acts on $\Da'$.

\begin{lemma}\label{lemma:7.2}
 The Ree group $\GG(q)$ acts on $\Da'$
\end{lemma}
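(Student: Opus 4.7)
The plan is to extend Lemma~\ref{lemma:7.1} from the stabilizer $G_{\fr}(P_\infty)$ to all of $\GG(q)$ by producing one additional generator that acts linearly on the projective embedding. Since $\GG(q)$ acts $2$-transitively on the set $\Gamma$ of $\fgq$-rational points (Section~\ref{sec1:2.1}), the subgroup $G_{\fr}(P_\infty)$ is maximal and $\GG(q)=\langle G_{\fr}(P_\infty),\phi\rangle$ for any $\phi\in\GG(q)\setminus G_{\fr}(P_\infty)$. Thus, combined with Lemma~\ref{lemma:7.1}, it suffices to exhibit a single such $\phi$ and verify linearity of its action on $\Da'$.

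First I would take $\phi$ to be the involution described in Remark~\ref{sec1:ZeroPole}, which acts on $\fr$ by $x_i\mapsto x_{-i}/x_{-1}$ and $y_i\mapsto y_{-i}/x_{-1}$ for $i=0,\pm 1,\pm 2,\pm 3$, using the labeling of Table~\ref{table:4.2}. This involution swaps $P_\infty$ with $P_{000}$ and therefore lies in $\GG(q)\setminus G_{\fr}(P_\infty)$. Under the smooth embedding $\pi=(1:x:y_1:y_2:w_1:\cdots:w_{10})$ of Section~\ref{sec1:5}, the projective coordinates of $\phi(P)$ are $((\phi^*x_i)(P),(\phi^*y_i)(P))=(x_{-i}(P)/x_{-1}(P),y_{-i}(P)/x_{-1}(P))$. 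Clearing the common factor $x_{-1}(P)$ from all fourteen entries, $\pi(\phi(P))$ is simply the signed permutation $(x_i,y_i)\mapsto(x_{-i},y_{-i})$ of $\pi(P)$. This lifts to an explicit $14\times 14$ signed permutation matrix acting on $\ppfgqc$, so $\phi$ acts linearly on the embedding.

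Combining the two actions, every element of $\GG(q)$ is realized on $\ppfgqc$ as a product of lower triangular matrices (from Lemma~\ref{lemma:7.1}) and the signed permutation matrix representing $\phi$. The hard part is essentially bookkeeping: verifying that the $x_i$'s and $y_i$'s of Table~\ref{table:4.2} really do span $\Da'$ (which is immediate from the table) and that the permutation $i\mapsto -i$ correctly encodes $\phi$ on each of the fourteen basis functions, including the distinguished coordinates $x_0=w_2$ and $y_0=w_7$ which are fixed by $i\mapsto -i$ and so map to themselves up to the overall rescaling by $x_{-1}$. Since $\phi$ is induced from an automorphism of the curve, the permutation action automatically preserves the image of the Ree curve under $\pi$, and hence $\GG(q)$ preserves $\Da'$ and acts linearly on it.
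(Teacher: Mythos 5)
Your proposal is correct and follows essentially the same route as the paper: both use the involution $\phi$ swapping $x_i\leftrightarrow x_{-i}$, $y_i\leftrightarrow y_{-i}$ up to the common factor $x_{-1}=-w_8$ (so that it acts as a signed permutation in projective coordinates), combined with the fact that $\GG(q)$ is generated by $G_{\fr}(P_\infty)$ and $\phi$. Your extra justification of the generation step via $2$-transitivity and maximality of the point stabilizer is a minor elaboration of what the paper asserts directly.
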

\begin{proof}
We recall that the Ree group contains an involution map $\phi:\fr \to \fr$ that swap $x_{i} \longleftrightarrow x_{-i}$ and $y_{i} \longleftrightarrow y_{-i}$. As in \cite{Ped}, $\phi$ can be given explicitly by $\phi:\fr \to \fr$, where
\begin{alignat*}{3}
1 &\mapsto \frac{w_8}{w_8}, \quad x &\mapsto \frac{w_6}{w_8}, \quad y_1 &\mapsto \frac{w_{10}}{w_8},\\
y_2 &\mapsto \frac{w_9}{w_8}, \quad w_1 &\mapsto \frac{w_3}{w_8}, \quad w_4 &\mapsto \frac{w_5}{w_8},\\
w_2 &\mapsto \frac{w_2}{w_8}, \quad w_7 &\mapsto \frac{w_7}{w_8}, \quad v &\mapsto \frac{v}{w_8}.
\end{alignat*}
Therefore, apart from the $w_8$ factor, this automorphism maps $1 \longleftrightarrow w_8$, $x \longleftrightarrow w_6$, $w_1 \longleftrightarrow w_3$, $y_1 \longleftrightarrow w_{10}$, $y_2 \longleftrightarrow w_9$, $w_4 \longleftrightarrow w_5$. Moreover, $\phi$ sends the point at infinity $P_\infty$ to the point $P_{000}$. Since $\GG(q)$ is generated by $G_{\fr}(P_\infty)$ and $\phi$, the result follows and we have $\GG(q)$ acts on $\Da$.
\end{proof}
\begin{remark}
 \begin{enumerate}
 \item[(1)] From the proof of Lemma \ref{lemma:7.2}, we see that the Ree group $\GG(q)$ acts on the space $V:=\langle 1,x,w_1,w_2,w_3,w_6,w_8 \rangle$. This is the same action of $\GG(q)$ on a 7-dimensional space as in \cite{Tits}.

 \item[(2)] $\phi\circ \psi_{-1,0,0,0}$ is similar to the automorphism $\alpha^{\beta^3}$ defined in \cite{Wil1} which acts by negating $v_0$ and swapping $v_r$ with $v_{-r}$ ($r=1,2,3$).
\end{enumerate}
\end{remark}


\section{Further Properties of the Ree Curve}\label{sec1:8}
In this section we study three more properties of the Ree curve over specific finite fields. First we answer the 20 years old problem from \cite{Ped}, which is to find the Weierstrass non-gaps semigroup at $P_\infty$. We find that contrary to the case of the Hermitian and Suzuki curves, the pole orders of the basis functions $1,x,y_1$,$y_2,w_1,\dots,w_{10}$ of $\LL(mP_\infty)$ over $\fg{27}$ do not generate the Weierstrass non-gaps semigroup $H(P_\infty)$. 

\subsection{The Weierstrass Non-gaps Semigroup $H(P_\infty)$ over $\fg{27}$}\label{sec1:8.1}
The open problem from Pedersen's paper \cite{Ped} in 1993 is to find the set of all non-gaps at $P_\infty$. In this subsection we are interested in finding the Weierstrass non-gaps semigroup $H(P_\infty)$ of the Ree curve over $\fg{27}$. We show that the pole orders (Table \ref{table:4.1}) of the basis functions $1,x,y_1$,$y_2,w_1,\dots,w_{10}$ of $\LL(mP_\infty)$ that is used in Section \ref{sec1:5} to define the smooth embedding for the Ree curve in the projective space do not generate the full Weierstrass non-gaps semigroup $H(P_\infty)$. We mention that for the Hermitian and Suzuki curves, the pole orders of the basis functions of $\LL(mP_\infty)$ do generate the Weierstrass non-gaps semigroup $H(P_\infty)$.

\begin{table}[htb!]
\begin{center}
\begin{tabular}{  c c r c l  }
\toprule
$f$ &$~~~$ & $\nu_0(f)$ &$~~~$ &$\nu_{\infty}(f)$ \\
\midrule
$x$ & & $1 $ & & $-(q^2) $ \\
$y_1$ & & $q_0+1 $ & & $-(q^2+q_0q) $ \\
$y_2$ & & $2q_0+1 $ & & $-(q^2+2q_0q) $ \\
$w_1$ & & $3q_0+1 $ & & $-(q^2+3q_0q) $ \\
$w_2$ & & $q+3q_0+1 $ & & $-(q^2+3q_0q+q) $ \\
$w_3$ & & $2q+3q_0+1 $ & & $-(q^2+3q_0q+2q) $ \\
$w_4$ & & $q+2q_0+1 $ & & $-(q^2+2q_0q+q) $ \\
$v$ & & $2q+3q_0+1 $ & & $-(q^2+3q_0q+q) $ \\
$w_5$ & & $q_0q+q+3q_0+1 $ & & $-(q^2+3q_0q+q+q_0) $ \\
$w_6$ & & $3q_0q+2q+3q_0+1 $ & & $-(q^2+3q_0q+2q+3q_0) $ \\
$w_7$ & & $q_0q+q+2q_0+1 $ & & $-(q^2+2q_0q+q+q_0) $ \\
$w_8$ & & $q^2+3q_0q+2q+3q_0+1 $ & & $-(q^2+3q_0q+2q+3q_0+1) $ \\
$w_9$ & & $q_0q+2q+3q_0+1 $ & & $-(q^2+3q_0q+2q+q_0) $ \\
$w_{10}$ & & $2q_0q+2q+3q_0+1 $ & & $-(q^2+3q_0q+2q+2q_0) $ \\
\bottomrule
\end{tabular}
\end{center}
\caption{The zero and pole orders of the rational functions $x,y_1,y_2,w_1,\dots, w_{10}$ at $P_{000}$ and $P_{\infty}$.}
\label{table:4.1}
\end{table}

In particular, for the Hermitian curve, the functions $1,x,y$ generate the space $\LL(mP_\infty)$ with pole orders $0,q_0,q_0+1$ at $P_\infty$ respectively. Hence, we have $\langle q_0,q_0+1 \rangle\subseteq H(P_\infty)$, but $\NN \setminus \langle q_0,q_0+1 \rangle=\sfrac{q_0(q_0-1)}{2}=\gh$. Therefore, $H(P_\infty)=\langle q_0,q_0+1 \rangle$ by the Weierstrass gap theorem (\cite{Duursma1}). 

For the Suzuki curve, the functions $1,x,y,z,w$ generate the space $\LL(mP_\infty)$ with pole orders $0,q,q+q_0,q+2q_0,q+2q_0+1$ at $P_\infty$ respectively. Hence, we have $\langle q,\, q+q_0,\, q+2q_0,\, q+2q_0+1 \rangle\subseteq H(P_\infty)$, but $\NN \setminus \langle q,\, q+q_0,\, q+2q_0,\, q+2q_0+1\rangle=q_0(q-1)=\gs$ \cite{HS}. Therefore, $H(P_\infty)=\langle q,\,q+q_0,\,q+2q_0,\,q+2q_0+1 \rangle$. On the other hand, this does not hold for the Ree curve, more precisely, the pole orders of the basis functions $1,x,y_1,y_2,w_1,\dots,w_{10}$ of $\LL(mP_\infty)$ over $\fg{27}$ do not generate the full Weierstrass non-gaps semigroup $H(P_\infty)$. 

In establishing the set of nongaps we use the following observations about the Ree curve. First $P_\infty$ is a Weierstrass point since $q^2<\gr$ is the first non-gap integer at $P_\infty$ (Lemma \ref{lemma:2}). Second the divisor $(2\gr-2)P_\infty$ is a canonical divisor since $d(\sfrac{w_6}{w_8})=(2\gr-2)P_\infty$ \cite{Ped}. Third we have $(3q_0-2)\cdot m = 2\gr-2$. Therefore, over $\fg{27}$ we have $7H=7\cdot mP_\infty=(2\gr-2)P_\infty$ is a canonical divisor and so $-\nu_\infty(w_8^{3q_0-2})=(3q_0-2)m=2\gr-2$ is a non-gap integer. This implies that $2\gr-1$ is a gap \cite[Remark 4.4]{Pellikaan}. Which means that the Weierstrass non-gaps semigroup $H(P_\infty)$ is symmetric. 

Using the observations above, we have in the interval $0,1,\dots,2\gr-1=7253$, 3627 gaps and $3627$ non-gaps (since $\gr=3627$). A straightforward calculation of the semigroup generated by the pole orders of the functions $1,x,y_1,y_2,w_1,\dots,w_{10}$ will create only 3040 non-gap integers in the interval $0,1,\dots,2\gr-1=7253$ which proves the above mentioned claim. Next, we compute the full list of non-gap integers that are less than $2\gr$ as follows. 

Since we know that $H(P_\infty)$ is symmetric, whenever we find a non-gap integer $a$, we would know that $2\gr-1-a$ is a gap. Since we have the numbers $1,2,\dots,q^2-1=728$ are gap integers by Lemma \ref{lemma:2}, $(2\gr-1)-(1,2,\dots,728)$ are non-gap integers. Therefore, we will have 3100 non-gaps and 3100 gaps. Moreover, we use the fact that $7H$ is a canonical divisor and that the number of non-gaps in each equivalence class $a$ modulo $26$ is equal to $139$ if $a$ is odd and $140$ if $a$ is even. 

To compute the gaps and non-gaps in the interval $0,1,\dots,7253=2\gr-1$ for the Ree curve over $\fg{27}$, we used the computer algebra system MAGMA \cite{MAGMA}. For a function $f$ with poles only at $P_\infty$ we compute the pole order as the dimension of the ring $R/(I,f)$, where $R/I$  is the coordinate ring of the curve. The functions in $\LL((3q_0-2)mP_\infty)$ generate the affine ring of functions regular outside $P_\infty$. We use the defining equations of the curve to reduce the set of generating functions to a smaller generating set. In this set there are functions with the same pole order. We create new functions by taking differences of functions with the same pole order. The second round of reduction brings the set of generators in a form where all functions have different pole orders. At that point the non-gaps are known. In the second round we can divide the functions into groups of sizes $139$ or $140$ for pole orders in a given residue class modulo $26$. The semigroup of Weierstrass non-gaps is generated by the $132$ non-gaps in Table \ref{table:nongaps}. 

\begin{table}[htb!]
\begin{center}
\begin{tabular}{ rrrrrrrrrrrrrrr  }
\toprule
  729  &810  &891  &918  &921  &972  &999  &1002  &1026  &1029  &1032  &1035  &1036  \\
1866 &2520  &2547  &2601  &2604  &2628  &2631  &2658  &2706  &2709  &2712  &2739  &2820  \\
3250 &3277  &3285  &3286  &3287  &3312  &3313  &3314  &3331  &3358  &3366  &3367  &3368  \\
3393 &3394  &3395  &3396  &3444  &3447  &3471  &3474  &3477  &3498  &3501  &3504  &3507  \\
3557 &3558  &3584  &3585  &3592  &3612  &3619  &3638  &3665  &3673  &3700  &3703  &3750  \\
3751 &3754  &3777  &3778  &3781  &3784  &3804  &3805  &3808  &3811  &3814  &3862  &3863  \\
3865 &3889  &3890  &3892  &3899  &3919  &3926  &3943  &3944  &3946  &3947  &3970  &3971  \\
3973 &3974  &3980  &4000  &4001  &4007  &4010  &4047  &4048  &4049  &4051  &4052  &4054  \\
4055 &4057  &4058  &4061  &4081  &4082  &4084  &4085  &4088  &4091  &4111  &4112  &4115  \\
4118 &4121  &4174  &4201  &4228  &4237  &4238  &4240  &4241  &4481  &4484  &4508  &4511  \\
4535 &4538 \\
\bottomrule
\end{tabular}

\end{center}
\caption{The $132$ non-gaps that generate the Weierstrass semigroup for the Ree curve over $\fg{27}$ with genus $\gr = 3627$.}
\label{table:nongaps}
\end{table}



\appendix
\section{Equations of the Ree Curve}\label{app:1}
In this appendix we list the complete list of 105 equations that define the Ree cuve. These equations will be used in Sections \ref{sec1:4}, \ref{sec1:5}.

\begin{description}

\item[Set 1 and Set 2] 35 equations of degree $q_0+1$ of the form $aA^{q_0}+bB^{q_0}+cC^{q_0}=0$, where $A,B,C \in \{1,x,w_1,w_2,w_3,w_6,w_8 \}$ and $a,b,c \in \{1,x,\dots,w_{10}\}$ and 35 equations of degree $3q_0+1$ of the form $a^{3q_0}A+b^{3q_0}B+c^{3q_0}C=0$, where $a,b,c,A,B,C$ are the same functions in Set 1. Note that $v:=w_7-w_2$.

\begin{minipage}{.45\linewidth}
\begin{equation}\label{app:eq1}
x^{q_0}w_{4} + 2y_{1}w_{2}^{q_0} + y_{2}w_{1}^{q_0}= 0.
\end{equation}
\end{minipage}%
\begin{minipage}{0.55\linewidth}
\begin{equation}\label{app:eq1'}
xw_{4}^{3q_0} + 2y_{1}^{3q_0}w_{2} + y_{2}^{3q_0}w_{1}= 0.
\end{equation}
\end{minipage}

\begin{minipage}{.45\linewidth}
\begin{equation}\label{app:eq2}
2x^{q_0}y_{2} + w_{1} + w_{3}^{q_0}= 0.
\end{equation}
\end{minipage}%
\begin{minipage}{0.55\linewidth}
\begin{equation}\label{app:eq2'}
2xy_{2}^{3q_0} + w_{1}^{3q_0} + w_{3}= 0.
\end{equation}
\end{minipage}

\begin{minipage}{.45\linewidth}
\begin{equation}\label{app:eq3}
2w_{2}^{q_0}w_{6} + 2w_{3}^{q_0}w_{10} + 2w_{5}w_{8}^{q_0}= 0.
\end{equation}
\end{minipage}%
\begin{minipage}{0.55\linewidth}
\begin{equation}\label{app:eq3'}
2w_{2}w_{6}^{3q_0} + 2w_{3}w_{10}^{3q_0} + 2w_{5}^{3q_0}w_{8}= 0.
\end{equation}
\end{minipage}

\begin{minipage}{.45\linewidth}
\begin{equation}\label{app:eq4}
x^{q_0}y_{1} + 2y_{2} + 2w_{2}^{q_0}= 0.
\end{equation}
\end{minipage}%
\begin{minipage}{0.55\linewidth}
\begin{equation}\label{app:eq4'}
xy_{1}^{3q_0} + 2y_{2}^{3q_0} + 2w_{2}= 0.
\end{equation}
\end{minipage}

\begin{minipage}{.45\linewidth}
\begin{equation}\label{app:eq5}
w_{1}^{q_0}w_{10} + w_{2}^{q_0}w_{9} + 2w_{4}w_{8}^{q_0}= 0.
\end{equation}
\end{minipage}%
\begin{minipage}{0.55\linewidth}
\begin{equation}\label{app:eq5'}
w_{1}w_{10}^{3q_0} + w_{2}w_{9}^{3q_0} + 2w_{4}^{3q_0}w_{8}= 0.
\end{equation}
\end{minipage}

\begin{minipage}{.45\linewidth}
\begin{equation}\label{app:eq6}
2xw_{6}^{q_0} + w_{1}^{q_0}w_{4} + w_{3}= 0.
\end{equation}
\end{minipage}%
\begin{minipage}{0.55\linewidth}
\begin{equation}\label{app:eq6'}
2x^{3q_0}w_{6} + w_{1}w_{4}^{3q_0} + w_{3}^{3q_0}= 0.
\end{equation}
\end{minipage}

\begin{minipage}{.45\linewidth}
\begin{equation}\label{app:eq7}
2y_{2}w_{6}^{q_0} + w_{3}^{q_0}w_{4} + w_{10}= 0.
\end{equation}
\end{minipage}%
\begin{minipage}{0.55\linewidth}
\begin{equation}\label{app:eq7'}
2y_{2}^{3q_0}w_{6} + w_{3}w_{4}^{3q_0} + w_{10}^{3q_0}= 0.
\end{equation}
\end{minipage}

\begin{minipage}{.45\linewidth}
\begin{equation}\label{app:eq8}
x^{q_0+1} + 2y_{1} + 2w_{1}^{q_0}= 0.
\end{equation}
\end{minipage}%
\begin{minipage}{0.55\linewidth}
\begin{equation}\label{app:eq8'}
x^{3q_0+} + 2y_{1}^{3q_0} + 2w_{1}= 0.
\end{equation}
\end{minipage}

\begin{minipage}{.45\linewidth}
\begin{equation}\label{app:eq9}
x^{q_0}w_{10} + y_{2}w_{8}^{q_0} + w_{2}^{q_0}w_{5}= 0.
\end{equation}
\end{minipage}%
\begin{minipage}{0.55\linewidth}
\begin{equation}\label{app:eq9'}
xw_{10}^{3q_0} + y_{2}^{3q_0}w_{8} + w_{2}w_{5}^{3q_0}= 0.
\end{equation}
\end{minipage}

\begin{minipage}{.45\linewidth}
\begin{equation}\label{app:eq10}
2w_{1}^{q_0}w_{8} + 2w_{3}w_{8}^{q_0} + w_{6}^{q_0}w_{9}= 0.
\end{equation}
\end{minipage}%
\begin{minipage}{0.55\linewidth}
\begin{equation}\label{app:eq10'}
2w_{1}w_{8}^{3q_0} + 2w_{3}^{3q_0}w_{8} + w_{6}w_{9}^{3q_0}= 0.
\end{equation}
\end{minipage}

\begin{minipage}{.45\linewidth}
\begin{equation}\label{app:eq11}
y_{2}w_{8}^{q_0} + 2w_{3}^{q_0}w_7 + 2w_{6}= 0.
\end{equation}
\end{minipage}%
\begin{minipage}{0.55\linewidth}
\begin{equation}\label{app:eq11'}
y_{2}^{3q_0}w_{8} + 2w_{3}w_7^{3q_0} + 2w_{6}^{3q_0}= 0.
\end{equation}
\end{minipage}

\begin{minipage}{.45\linewidth}
\begin{equation}\label{app:eq12}
2x^{q_0}v + y_{1}w_{3}^{q_0} + 2w_{1}^{q_0}w_{1}= 0.
\end{equation}
\end{minipage}%
\begin{minipage}{0.55\linewidth}
\begin{equation}\label{app:eq12'}
2xv^{3q_0} + y_{1}^{3q_0}w_{3} + 2w_{1}w_{1}^{3q_0}= 0.
\end{equation}
\end{minipage}

\begin{minipage}{.45\linewidth}
\begin{equation}\label{app:eq13}
w_{1}^{q_0}w_{6} + 2w_{3}^{q_0}w_{9} + v w_{8}^{q_0}= 0.
\end{equation}
\end{minipage}%
\begin{minipage}{0.55\linewidth}
\begin{equation}\label{app:eq13'}
w_{1}w_{6}^{3q_0} + 2w_{3}w_{9}^{3q_0} + v^{3q_0}w_{8}= 0.
\end{equation}
\end{minipage}

\begin{minipage}{.45\linewidth}
\begin{equation}\label{app:eq14}
2w_{3}^{q_0}w_{8} + w_{6}^{q_0} w_{6} + 2w_{8}^{q_0}w_{10}= 0.
\end{equation}
\end{minipage}%
\begin{minipage}{0.55\linewidth}
\begin{equation}\label{app:eq14'}
2w_{3}w_{8}^{3q_0} + w_{6}w_{6}^{3q_0} + 2w_{8}w_{10}^{3q_0}= 0. 
\end{equation}
\end{minipage}

\begin{minipage}{.45\linewidth}
\begin{equation}\label{app:eq15}
2w_{4}w_{8}^{q_0} + w_{6}^{q_0}w_7 + w_{8}= 0.
\end{equation}
\end{minipage}%
\begin{minipage}{0.55\linewidth}
\begin{equation}\label{app:eq15'}
2w_{4}^{3q_0}w_{8} + w_{6}w_7^{3q_0} + w_{8}^{3q_0}= 0.
\end{equation}
\end{minipage}

\begin{minipage}{.45\linewidth}
\begin{equation}\label{app:eq16}
2w_{2}^{q_0}w_{8} + 2w_{6}^{q_0}w_{10} + w_{8}^{q_0}w_{9}= 0.
\end{equation}
\end{minipage}%
\begin{minipage}{0.55\linewidth}
\begin{equation}\label{app:eq16'}
2w_{2}w_{8}^{3q_0} + 2w_{6}w_{10}^{3q_0} + w_{8}w_{9}^{3q_0}= 0.
\end{equation}
\end{minipage}

\begin{minipage}{.45\linewidth}
\begin{equation}\label{app:eq17}
xw_{3}^{q_0} + 2y_{2}w_{1}^{q_0} + 2v= 0.
\end{equation}
\end{minipage}%
\begin{minipage}{0.55\linewidth}
\begin{equation}\label{app:eq17'}
x^{3q_0}w_{3} + 2y_{2}^{3q_0}w_{1} + 2v^{3q_0}= 0.
\end{equation}
\end{minipage}

\begin{minipage}{.45\linewidth}
\begin{equation}\label{app:eq18}
2y_{1}w_{8}^{q_0} + w_{2}^{q_0}w_7 + 2w_{10}= 0.
\end{equation}
\end{minipage}%
\begin{minipage}{0.55\linewidth}
\begin{equation}\label{app:eq18'}
2y_{1}^{3q_0}w_{8} + w_{2}w_7^{3q_0} + 2w_{10}^{3q_0}= 0.
\end{equation}
\end{minipage}

\begin{minipage}{.45\linewidth}
\begin{equation}\label{app:eq19}
2xw_{8}^{q_0} + w_{1}^{q_0}w_7 + w_{9}= 0.
\end{equation}
\end{minipage}%
\begin{minipage}{0.55\linewidth}
\begin{equation}\label{app:eq19'}
2x^{3q_0}w_{8} + w_{1}w_7^{3q_0} + w_{9}^{3q_0}= 0.
\end{equation}
\end{minipage}

\begin{minipage}{.45\linewidth}
\begin{equation}\label{app:eq20}
x^{q_0}w_{5} + 2y_{2}w_{3}^{q_0} + w_{1}w_{2}^{q_0}= 0.
\end{equation}
\end{minipage}%
\begin{minipage}{0.55\linewidth}
\begin{equation}\label{app:eq20'}
xw_{5}^{3q_0} + 2y_{2}^{3q_0}w_{3} + w_{1}^{3q_0}w_{2}= 0.
\end{equation}
\end{minipage}

\begin{minipage}{.45\linewidth}
\begin{equation}\label{app:eq21}
2x^{q_0}w_{9} + y_{1}w_{8}^{q_0} + w_{1}^{q_0}w_{5}= 0.
\end{equation}
\end{minipage}%
\begin{minipage}{0.55\linewidth}
\begin{equation}\label{app:eq21'}
2xw_{9}^{3q_0} + y_{1}^{3q_0}w_{8} + w_{1}w_{5}^{3q_0}= 0.
\end{equation}
\end{minipage}

\begin{minipage}{.45\linewidth}
\begin{equation}\label{app:eq22}
2x^{q_0}w_7 + 2w_{5} + w_{8}^{q_0}= 0.
\end{equation}
\end{minipage}%
\begin{minipage}{0.55\linewidth}
\begin{equation}\label{app:eq22'}
2x w_7^{3q_0} + 2w_{5}^{3q_0} + w_{8}= 0.
\end{equation}
\end{minipage}

\begin{minipage}{.45\linewidth}
\begin{equation}\label{app:eq23}
x^{q_0}w_{6} + 2w_{1}w_{8}^{q_0} + 2w_{3}^{q_0}w_{5}= 0.
\end{equation}
\end{minipage}%
\begin{minipage}{0.55\linewidth}
\begin{equation}\label{app:eq23'}
xw_{6}^{3q_0} + 2w_{1}^{3q_0}w_{8} + 2w_{3}w_{5}^{3q_0}= 0.
\end{equation}
\end{minipage}

\begin{minipage}{.45\linewidth}
\begin{equation}\label{app:eq24}
 2y_{1}w_{6}^{q_0} + w_{2}^{q_0}w_{4} + 2w_{9}= 0.
\end{equation}
\end{minipage}%
\begin{minipage}{0.55\linewidth}
\begin{equation}\label{app:eq24'}
2y_{1}^{3q_0}w_{6} + w_{2}w_{4}^{3q_0} + 2w_{9}^{3q_0}= 0.  
\end{equation}
\end{minipage}

\begin{minipage}{.45\linewidth}
\begin{equation}\label{app:eq25}
w_{1}^{q_0}w_{9} + w_{2}^{q_0}w_{3} + 2w_{4}w_{6}^{q_0}= 0.
\end{equation}
\end{minipage}%
\begin{minipage}{0.55\linewidth}
\begin{equation}\label{app:eq25'}
w_{1}w_{9}^{3q_0} + w_{2}w_{3}^{3q_0} + 2w_{4}^{3q_0}w_{6}= 0.
\end{equation}
\end{minipage}

\begin{minipage}{.45\linewidth}
\begin{equation}\label{app:eq26}
 2w_{1}^{q_0}w_{5} + w_{2}^{q_0}v + 2w_{3}^{q_0}w_{4}= 0.
\end{equation}
\end{minipage}%
\begin{minipage}{0.55\linewidth}
\begin{equation}\label{app:eq26'}
2w_{1}w_{5}^{3q_0} + w_{2}v^{3q_0} + 2w_{3}w_{4}^{3q_0}= 0.
\end{equation}
\end{minipage}

\begin{minipage}{.45\linewidth}
\begin{equation}\label{app:eq27}
w_{1}^{q_0}w_{10} + 2w_{3}^{q_0} w_{3} + vw_{6}^{q_0}= 0.
\end{equation}
\end{minipage}%
\begin{minipage}{0.55\linewidth}
\begin{equation}\label{app:eq27'}
w_{1}w_{10}^{3q_0} + 2w_{3}w_{3}^{3q_0}+ v^{3q_0}w_{6}= 0.
\end{equation}
\end{minipage}

\begin{minipage}{.45\linewidth}
\begin{equation}\label{app:eq28}
w_{2}^{q_0}w_{10} + w_{3}^{q_0}w_{9} + w_{5}w_{6}^{q_0}= 0.
\end{equation}
\end{minipage}%
\begin{minipage}{0.55\linewidth}
\begin{equation}\label{app:eq28'}
w_{2}w_{10}^{3q_0} + w_{3}w_{9}^{3q_0} + w_{5}^{3q_0}w_{6}= 0.
\end{equation}
\end{minipage}

\begin{minipage}{.45\linewidth}
\begin{equation}\label{app:eq29}
2y_{1}w_{3}^{q_0} + y_{2}w_{2}^{q_0} + w_{5}= 0. 
\end{equation}
\end{minipage}%
\begin{minipage}{0.55\linewidth}
\begin{equation}\label{app:eq29'}
2y_{1}^{3q_0}w_{3} + y_{2}^{3q_0}w_{2} + w_{5}^{3q_0}= 0.
\end{equation}
\end{minipage}

\begin{minipage}{.45\linewidth}
\begin{equation}\label{app:eq30}
xw_{2}^{q_0} + 2y_{1}w_{1}^{q_0} + 2w_{4}= 0.
\end{equation}
\end{minipage}%
\begin{minipage}{0.55\linewidth}
\begin{equation}\label{app:eq30'}
x^{3q_0}w_{2} + 2y_{1}^{3q_0}w_{1} + 2w_{4}^{3q_0}= 0.
\end{equation}
\end{minipage}

\begin{minipage}{.45\linewidth}
\begin{equation}\label{app:eq31}
x^{q_0}w_{9} + y_{2}w_{6}^{q_0} + w_{2}^{q_0}(w_{2}+w_7)= 0. 
\end{equation}
\end{minipage}%
\begin{minipage}{0.55\linewidth}
\begin{equation}\label{app:eq31'}
xw_{9}^{3q_0} + y_{2}^{3q_0}w_{6} + w_{2}(w_{2}w_7)^{3q_0}= 0.
\end{equation}
\end{minipage}

\begin{minipage}{.45\linewidth}
\begin{equation}\label{app:eq32}
x^{q_0}w_{8} + 2w_{5}w_{6}^{q_0} + w_{8}^{q_0}(w_{2}+w_7)= 0.
\end{equation}
\end{minipage}%
\begin{minipage}{0.55\linewidth}
\begin{equation}\label{app:eq32'}
xw_{8}^{3q_0} + 2w_{5}^{3q_0}w_{6} + w_{8}(w_{2}+w_7)^{3q_0}= 0. 
\end{equation}
\end{minipage}

\begin{minipage}{.45\linewidth}
\begin{equation}\label{app:eq33}
2x^{q_0}w_{3} + y_{1}w_{6}^{q_0} + w_{1}^{q_0}(w_{2}+w_7)= 0.
\end{equation}
\end{minipage}%
\begin{minipage}{0.55\linewidth}
\begin{equation}\label{app:eq33'}
2xw_{3}^{3q_0} + y_{1}^{3q_0}w_{6} + w_{1}(w_{2}+w_7)^{3q_0}= 0.  
\end{equation}
\end{minipage}

\begin{minipage}{.45\linewidth}
\begin{equation}\label{app:eq34}
2x^{q_0}w_{10} + w_{1}w_{6}^{q_0} + w_{3}^{q_0}(w_{2}+w_7)= 0.
\end{equation}
\end{minipage}%
\begin{minipage}{0.55\linewidth}
\begin{equation}\label{app:eq34'}
2xw_{10}^{3q_0} + w_{1}^{3q_0}w_{6} + w_{3}(w_{2}+w_7)^{3q_0}= 0.
\end{equation}
\end{minipage}

\begin{minipage}{.45\linewidth}
\begin{equation}\label{app:eq35}
x^{q_0}w_{4} + 2w_{6}^{q_0} + (w_{2}+w_7)= 0.
\end{equation}
\end{minipage}%
\begin{minipage}{0.55\linewidth}
\begin{equation}\label{app:eq35'}
xw_{4}^{3q_0} + 2w_{6} + (w_{2}+w_7)^{3q_0}= 0.
\end{equation}
\end{minipage}

\item[Set 3] The equation
\begin{equation}\label{app:eqset3}
-w_2^2+w_8+xw_6+w_1w_3=0.
\end{equation}

\item[Set 4] 34 quadratic equations of the form $f_{R_{ab}}f_{R_{cd}}+f_{R_{ad}}f_{R_{bc}}+f_{R_{ac}}f_{R_{db}}=0$, where $f_{R_{ab}}$ is the function such that $f_{R_{ab}}^{3q_0} \sim R_{ab}$ in the Table \ref{table:R}


\begin{align}
&2y_{1}w_{8} + w_{2}w_{9} + 2w_{4}w_{10} + vw_{9}= 0.\label{app:Q1} \\ 
&w_{5}w_{8} + w_{6}w_{9} + 2w_{10}^2= 0.\label{app:Q2} \\ 
&2y_{1}w_{1} + y_{2}^2 + 2w_{5}= 0.\label{app:Q3} \\ 
&2xw_{8} + 2w_{2}w_{3} + 2w_{3}v + w_{4}w_{9}= 0.\label{app:Q4} \\ 
&w_{3}w_{5} + w_{4}w_{10} + vw_{9}= 0.\label{app:Q5} \\ 
&2xy_{2} + y_{1}^2 + 2w_{4}= 0.\label{app:Q6} \\ 
&xw_{10} + y_{1}w_{9} + 2w_{2}w_{4} + 2w_{4}v= 0.\label{app:Q7}  \\
&2xw_{5} + y_{1}v + 2y_{2}w_{4}= 0.\label{app:Q8} \\ 
&2w_{1}w_{8} + w_{2}w_{6} + 2vw_{6} + w_{5}w_{10}= 0.\label{app:Q9} \\ 
&2w_{3}w_{6} + vw_{8} + w_{9}w_{10}= 0.\label{app:Q10} \\ 
&2xw_{6} + y_{2}w_{9} + 2w_{2}v + 2v^2= 0.\label{app:Q11} \\ 
&2xw_{1} + y_{1}y_{2} + 2v= 0.\label{app:Q12} 
\end{align}
\begin{align}
&2y_{2}w_{10} + 2w_{1}w_{9} + 2w_{2}w_{5} + vw_{5}= 0.\label{app:Q13} \\ 
& 2xw_{2} + xv + y_{1}w_{4} + 2w_{3}= 0.\label{app:Q14}\\
& 2y_{1}w_{8} + w_{2}w_{9} + w_{3}w_{5} + 2vw_{9}= 0.\label{app:Q15} \\ 
&y_{2}w_{5} + w_{1}w_{2} + w_{1}v + 2w_{6}= 0.\label{app:Q16} \\ 
&2y_{1}w_{6} + 2y_{2}w_{10} + 2w_{2}w_{5} + 2vw_{5}= 0.\label{app:Q18} \\
&2y_{1}w_{10} + w_{1}w_{3} + 2w_{2}v + v^2= 0.\label{app:Q19} \\ 
&2y_{2}w_{2} + y_{2}v + w_{1}w_{4} + 2w_{10}= 0.\label{app:Q20} \\ 
&y_{1}w_{5} + y_{2}w_{2} + y_{2}v + w_{10}= 0.\label{app:Q21} \\ 
&2y_{2}w_{8} + 2w_{2}w_{10} + vw_{10} + 2w_{5}w_{9}= 0.\label{app:Q22} \\ 
&2y_{2}w_{6} + 2w_{1}w_{10} + w_{5}^2= 0.\label{app:Q23} \\ 
&w_{2}^2 + w_{4}w_{5} + 2v^2 + 2w_{8}= 0.\label{app:Q24} \\ 
&2xw_{10} + y_{2}w_{3} + 2w_{4}v= 0.\label{app:Q25} \\ 
&2y_{1}w_{10} + 2y_{2}w_{9} + 2w_{4}w_{5}= 0.\label{app:Q26} \\ 
&2y_{1}w_{6} + w_{1}w_{9} + vw_{5}= 0.\label{app:Q27} \\ 
&y_{1}w_{9} + y_{2}w_{3} + 2w_{2}w_{4} + w_{4}v= 0.\label{app:Q28} \\ 
&2y_{1}w_{2} + y_{1}v + y_{2}w_{4} + w_{9}= 0.\label{app:Q29} \\ 
&xw_{5} + y_{1}w_{2} + y_{1}v + 2w_{9}= 0.\label{app:Q30} \\ 
&w_{3}w_{10} + w_{4}w_{8} + 2w_{9}^2= 0.\label{app:Q31} \\ 
&w_{4}w_{6} + vw_{10} + w_{5}w_{9}= 0.\label{app:Q32} \\ 
&2y_{1}w_{5} + y_{2}v + 2w_{1}w_{4}= 0.\label{app:Q33} \\ 
&2y_{2}w_{8} + 2w_{2}w_{10} + w_{4}w_{6} + 2vw_{10}= 0.\label{app:Q34} \\ 
&xw_{9} + y_{1}w_{3} + 2w_{4}^2=0.\label{app:Q35} 
\end{align}

\end{description}


\section{The Action of the Group $G_{\fr}(P_\infty)$ on $\Da'$ }\label{app:2}

In this appendix we complete Lemma \ref{lemma:7.1}. We list how the group $G_{\fr}(P_\infty)$ acts on the set $\{w_1,\dots,w_{10}\}$. Let $\phi:=\phi_{\alpha,\beta,\gamma,\delta} \in G_{\fr}(P_\infty)$.

\begin{align*}
\psi(w_{1}):&=\alpha^{3q_0+1}w_{1}+\alpha \beta^{3q_0}x+\left (\beta^{3q_0+1}-\gamma^{3q_0} \right).\\
\psi(w_{2}):&=\alpha^{3q_0+2}w_{2}+\alpha^{3q_0+1}\beta w_{1}+\alpha \gamma^{3q_0}x\\
             &\quad \left (\beta \gamma^{3q_0}-\delta^{3q_0}\right ).\\
\psi(w_{3}):&=\alpha^{3q_0+3}w_{3}-\alpha^{3q_0+2}\beta w_{2}+\alpha^{3q_0+1}\beta^2w_{1}+a\delta^{3q_0}x\\
             &\quad \left (\beta\delta^{3*q_0}-\beta^{3q_0+3}+\gamma^3\right ).\\
\psi(w_{4}):&=\alpha^{2q_0+2}w_{4}-\alpha^{2q_0+1}\beta y_2+\left ( \alpha^{q_0+1}\beta^{q_0+1}-\alpha^{q_0+1}\gamma \right ) y_1\\
             &\quad \left( -\alpha \delta -\alpha \beta^{2q_0+1}-\alpha \beta^{q_0}\gamma  \right ) x+\left ( \gamma^2-\beta \delta\right ).
\end{align*}

\begin{align*}
\psi(w_{5})&=\alpha^{4q_0+2}w_{5}+\alpha^{3q_0+2}\beta^{q_0}v+\alpha^{2q_0+2}\beta^{2q_0}w_{4}-\alpha^{3q_0+1}\gamma w_{1}\\
            &\quad -\alpha^{2q_0+1}\delta y_{2}+\left( \alpha^{q_0+1}\beta^{q_0}\delta-\alpha^{q_0+1}\beta^{3q_0+1}+\alpha^{q_0+1}\gamma^{3q_0} \right(      	y_{1}\\
            &\quad \left ( -\alpha \beta^{2q_0}\delta -\alpha \beta^{4q_0+1}+\alpha \beta^{q_0}\gamma^{3q_0} -\alpha \beta^{3q_0}\gamma \right ) x\\
            &\quad \left(+\delta^2-\beta^{3q_0+1}\gamma+\gamma^{3q_0+1} \right ).\\
\psi(v) &= \alpha^{3 q_{0}+2} v - \alpha^{3 q_{0}+1} \beta w_{1} - \alpha^{2 q_{0}+2} \beta^{1 q_{0} } w_{4}  + \alpha^{2 q_{0}+1} \gamma y_{2}\\
&\quad - \alpha^{1 q_{0}+1} \beta^{1 q_{0} } \gamma y_{1} + \alpha^{1 q_{0}+1} \delta y_{1} - 2 \alpha \beta^{3 q_{0}+1} x + \alpha \beta^{2 q_{0} } \gamma x\\
&\quad + \alpha \beta^{1 q_{0} } \delta x + \alpha \gamma^{3 q_{0} } x - \beta^{3 q_{0}+2} + \beta \gamma^{3 q_{0} } + \gamma \delta.\\ 
\psi(w_6)&= \alpha^{6 q_{0}+3} w_{6}   - \alpha^{3 q_{0}+3} \beta^{3 q_{0} } w_{3}  + \alpha^{3 q_{0}+2} \beta^{3 q_{0}+1} w_{2}   - \alpha^{3 q_{0}+2} \gamma^{3 q_{0} } w_{2}  -    \alpha^{3 q_{0}+1} \beta^{3 q_{0}+2} w_{1}\\
           &\quad + 2 \alpha^{3 q_{0}+1} \beta \gamma^{3 q_{0} } w_{1} - \alpha^{3 q_{0}+1} \delta^{3 q_{0} } w_{1}     - \alpha \beta^{3 q_{0} } \delta^{3 q_{0} } x -    2 \alpha \gamma^{6 q_{0} } x - \beta^{6 q_{0}+3}\\
           &\quad - \beta^{3 q_{0}+1} \delta^{3 q_{0} } - 2 \beta \gamma^{6 q_{0} } + \gamma^{3 q_{0} } \delta^{3 q_{0} } + \delta^3.\\
\psi(w_7)&=\psi(w_2)+\psi(v).\\
\psi(w_8)&= \alpha^{6 q_{0}+4} w_{8} -\alpha^{6 q_{0}+3} \beta w_{6} +\alpha^{3 q_{0}+3} \gamma^{3 q_{0} } w_{3}+ 2 \alpha^{3 q_{0}+2} \beta \gamma^{3  q_{0} } w_{2}  - 2 \alpha^{3 q_{0}+2} \delta^{3 q_{0} } w_{2}\\
          &\quad+\alpha^{3 q_{0}+1} \beta^{3 q_{0}+3} w_{1} + \alpha^{3 q_{0}+1} \beta^2 \gamma^{3 q_{0} } w_{1} - 2 \alpha^{3 q_{0}+1} \beta \delta^{3 q_{0} } w_{1} -\alpha^{3 q_{0}+1} \gamma^3 w_{1} + 5 \alpha \beta^{6 q_{0}+3} x \\
          &\quad- \alpha \beta^{3 q_{0} } \gamma^3 x - 2 \alpha \gamma^{3 q_{0} } \delta^{3 q_{0} } x - \alpha \delta^3 x + 2 \beta^{6 q_{0}+4} - 4 \beta^{3 q_{0}+3} \gamma^{3 q_{0} } - \beta^{3 q_{0}+1} \gamma^3 + 3 \beta^2 \gamma^{6 q_{0} }\\
          &\quad - 2 \beta \gamma^{3 q_{0} } \delta^{3 q_{0} } - \beta \delta^3 + \gamma^{3 q_{0}+3} + \delta^{6 q_{0} }.\\
\psi(w_9)&=\alpha^{4 q_{0}+3} w_{9}  + \alpha^{4 q_{0}+2} \beta w_{5} - \alpha^{3 q_{0}+3} \beta^{1 q_{0} } w_{3}  + \alpha^{3 q_{0}+2} \beta^{1 q_{0}+1} v  + \alpha^{3 q_{0}+2} \gamma w_{2}\\
          &\quad - \alpha^{3 q_{0}+2} \gamma v - \alpha^{3 q_{0}+1} \beta \gamma w_{1} + \alpha^{2 q_{0}+2} \beta^{2 q_{0}+1} w_{4}  + \alpha^{2 q_{0}+2} \beta^{1 q_{0} } \gamma w_{4} + \alpha^{2 q_{0}+2} \delta w_{4}\\
          &\quad - \alpha^{2 q_{0}+1} \beta \delta y_{2} + \alpha^{2 q_{0}+1} \gamma^2 y_{2} - 2 \alpha^{1 q_{0}+1} \beta^{3 q_{0}+2} y_{1} + \alpha^{1 q_{0}+1} \beta^{1 q_{0}+1} \delta y_{1} - \alpha^{1 q_{0}+1} \beta^{1 q_{0} } \gamma^2 y_{1}\\
          &\quad + 3 \alpha^{1 q_{0}+1} \beta \gamma^{3 q_{0} } y_{1} + 2 \alpha^{1 q_{0}+1} \gamma \delta y_{1} - \alpha^{1 q_{0}+1} \delta^{3 q_{0} } y_{1}  - 2 \alpha \beta^{4 q_{0}+2} x - 4 \alpha \beta^{3 q_{0}+1} \gamma x\\
          &\quad - \alpha \beta^{2 q_{0}+1} \delta x + \alpha \beta^{2 q_{0} } \gamma^2 x + 3 \alpha \beta^{1 q_{0}+1} \gamma^{3 q_{0} } x +
           2 \alpha \beta^{1 q_{0} } \gamma \delta x - \alpha \beta^{1 q_{0} } \delta^{3 q_{0} } x\\
          &\quad + 3 \alpha \gamma^{3 q_{0}+1} x + \alpha \delta^2 x - 2 \beta^{3 q_{0}+2} \gamma + 3 \beta \gamma^{3 q_{0}+1}
          + \beta \delta^2 + \gamma^2 \delta - \gamma \delta^{3 q_{0} }.\\
\psi(w_{10})&=\alpha^{5 q_{0}+3} w_{10}  + \alpha^{4 q_{0}+3} \beta^{1 q_{0} } w_{9} + \alpha^{4 q_{0}+2} \gamma w_{5} + \alpha^{3 q_{0}+3} \beta^{2 q_{0} } w_{3}  + \alpha^{3 q_{0}+2} \beta^{1 q_{0} } \gamma v\\
            &\quad - \alpha^{3 q_{0}+2} \delta w_{2} - \alpha^{3 q_{0}+2} \delta v + \alpha^{3 q_{0}+1} \gamma^2 w_{1} +
    \alpha^{2 q_{0}+2} \beta^{3 q_{0}+1} w_{4}   + \alpha^{2 q_{0}+2} \beta^{2 q_{0} } \gamma w_{4}\\
            &\quad + \alpha^{2 q_{0}+2} \beta^{1 q_{0} } \delta w_{4} - \alpha^{2 q_{0}+2} \gamma^{3 q_{0} } w_{4} + \alpha^{2 q_{0}+1} \beta^{3 q_{0}+2} y_{2} - 2 \alpha^{2 q_{0}+1} \beta \gamma^{3 q_{0} } y_{2} - \alpha^{2 q_{0}+1} \gamma \delta y_{2}\\
            &\quad + \alpha^{2 q_{0}+1} \delta^{3 q_{0} } y_{2} - \alpha^{1 q_{0}+1} \beta^{4 q_{0}+2} y_{1} + 2 \alpha^{1 q_{0}+1} \beta^{3 q_{0}+1} \gamma y_{1} +    2 \alpha^{1 q_{0}+1} \beta^{1 q_{0}+1} \gamma^{3 q_{0} } y_{1} + \alpha^{1 q_{0}+1} \beta^{1 q_{0} } \gamma \delta y_{1}\\
            &\quad - \alpha^{1 q_{0}+1} \beta^{1 q_{0} } \delta^{3 q_{0} } y_{1} - 2 \alpha^{1 q_{0}+1} \gamma^{3 q_{0}+1} y_{1} - 2 \alpha^{1 q_{0}+1} \delta^2 y_{1} + \alpha \beta^{5 q_{0}+2} x + 2 \alpha \beta^{4 q_{0}+1} \gamma x\\
            &\quad + 2 \alpha \beta^{3 q_{0}+1} \delta x +\alpha \beta^{3 q_{0} } \gamma^2 x - 2 \alpha \beta^{2 q_{0}+1} \gamma^{3 q_{0} } x - \alpha \beta^{2 q_{0} } \gamma \delta x + \alpha \beta^{2 q_{0} } \delta^{3 q_{0} } x\\
            &\quad -    2 \alpha \beta^{1 q_{0} } \gamma^{3 q_{0}+1} x - 2 \alpha \beta^{1 q_{0} } \delta^2 x - 2 \alpha \gamma^{3 q_{0} } \delta x + \beta^{3 q_{0}+2} \delta + \beta^{3 q_{0}+1} \gamma^2\\
            &\quad - 2 \beta \gamma^{3 q_{0} } \delta - \gamma^{3 q_{0}+2} - 2 \gamma \delta^2 + \delta^{3 q_{0}+1}.
\end{align*}


\bibliographystyle{amsplain}

\newpage

\bibliography{mypaper,mypaper2,mypaper3,mypaper0}

\end{document}